\newcommand{\rank}{{\rm rank}\, }
\renewcommand{\l}{\left}
\renewcommand{\r}{\right}
\newcommand{\maru}[1]{{\ooalign{\hfil#1\/\hfil\crcr
\raise.167ex\hbox{\mathhexbox20D}}}}
\newcommand{\ruby}[2]{%
 \leavevmode
 \setbox0=\hbox{#1}%
 \setbox1=\hbox{\tiny #2}%
 \ifdim\wd0>\wd1 \dimen0=\wd0 \end{lemma}se \dimen0=\wd1 \fi
 \hbox{%
   \kanjiskip=0pt plus 2fil
   \xkanjiskip=0pt plus 2fil
   \vbox{%
     \hbox to \dimen0{%
       \tiny \hfil#2\hfil}%
     \nointerlineskip
     \hbox to \dimen0{\mathstrut\hfil#1\hfil}}}}
\newcommand{\la}{\langle}
\newcommand{\ra}{\rangle}
\newcommand{\Z}{\mathbb{Z}}
\newcommand{\C}{\mathbb{C}}
\newcommand{\R}{\mathbb{R}}
\newcommand{\Q}{\mathbb{Q}}
\newcommand{\g}{\mathfrak{g}}
\newcommand{\h}{\mathfrak{h}}
\newcommand{\F}{\mathbb{F}}
\newcommand{\End}{\mathrm{End}}
\newcommand{\aut}{\mathrm{Aut}\,}
\newcommand{\Aut}{\mathrm{Aut}\,}
\newcommand{\tr}{\mathrm{tr}}
\newcommand{\ad}{\mathrm{ad}}
\newcommand{\Com}{\mathrm{Com}}
\newcommand{\Span}{\mathrm{Span}}
\newcommand{\vacuum}{\mathrm{1\hspace{-3.2pt}l}}
\newcommand{\vac}{\vacuum}
\newcommand{\irr}{\mathrm{Irr}}
\newcommand{\allzero}{\mathbf{0}}
\makeatletter \@addtoreset{equation}{section}
\theoremstyle{plain}
\newtheorem{maintheorem}{Main Theorem}
\newtheorem{maincoro}{Main Corollary} 
\newtheorem{theorem}{Theorem}[section]
\newtheorem{proposition}[theorem]{Proposition}
\newtheorem{lemma}[theorem]{Lemma}
\newtheorem{corollary}[theorem]{Corollary}
\theoremstyle{definition}
\newtheorem{definition}[theorem]{Definition}
\theoremstyle{remark}
\newtheorem{remark}[theorem]{Remark}
\numberwithin{equation}{section}
\title[Extra automorphisms of cyclic orbifolds of lattice VOAs]{Extra automorphisms of cyclic orbifolds of\\ lattice vertex operator algebras}
 \subjclass[2010]{Primary  17B69, Secondary 11H71}
 \keywords{Extra automorphisms, Cyclic orbifolds, Lattice vertex operator algebras, Leech lattice}
\author{Ching Hung Lam} %
  \address[C. H. Lam] {Institute of Mathematics, Academia Sinica, Taipei 10617, Taiwan} 
  \email{chlam@math.sinica.edu.tw}
\author[H. Shimakura]{Hiroki Shimakura}%
\address[H. Shimakura]{Department of Applied Mathematics, Faculty of Sciences
Fukuoka University, Fukuoka 814-0180, Japan }%
\email {shimakura@fukuoka-u.ac.jp}%
\date{}
\thanks{C.H.Lam was partially supported by a research grant AS-IA-107-M02 of Academia Sinica  and MOST grant  107-2115-M-001-003-MY3  of Taiwan}
\thanks{H.\ Shimakura was partially supported by JSPS KAKENHI Grant Numbers JP19KK0065 and JP20K03505.}
\newcommand{\sfr}[2]{\leavevmode\kern-.1em
  \raise.5ex\hbox{\the\scriptfont0 #1}\kern-.1em
  /\kern-.15em\lower.25ex\hbox{\the\scriptfont0 #2}}
\begin{document}

\begin{abstract}
In this article, we study automorphisms of the cyclic orbifold of a vertex operator algebra associated with a rootless even lattice for a lift of a fixed-point free isometry of odd prime order $p$.
We prove that such a cyclic orbifold contains extra automorphisms, not induced from automorphisms of the lattice vertex operator algebra, if and only if the rootless even lattice can be constructed by Construction B from a code over $\mathbb{Z}_p$ or is isometric to the coinvariant lattice of the Leech lattice associated with a certain isometry of order $p$.
\end{abstract}

\maketitle

%\tableofcontents

\section{Introduction}
The orbifold of a vertex operator algebra (VOA) for  an automorphism group is the fixed-point subVOA.
This is a standard method to construct new VOAs from known VOAs.
Recently, it has been proved in \cite{CM,Mc22, Mi} that a (cyclic) orbifold has ``nice'' properties, such as rationality and $C_2$-cofiniteness, if the original VOA has the same properties.
It is natural to ask if all automorphisms of the orbifold can be obtained from those of the original VOA; more precisely, for a VOA $V$ and an automorphism group $G$, the automorphism group $\Aut(V^G)$ of the orbifold $V^G$ is a quotient of the normalizer $N_{\Aut(V)}(G)$ of $G$ in $\Aut(V)$ or not.
We say that an automorphism of $V^{G}$ is \emph{extra} if it cannot be obtained from $N_{\Aut(V)}(G)$. Therefore, the key question is to determine when the orbifold has extra automorphisms.
In order to establish a general theory, one has to analyze many examples.

One of the fundamental examples of VOAs is a lattice VOA $V_L$ associated with an even lattice $L$.
It is known \cite{FLM} that the isometry group $O(L)$ of $L$ lifts to the subgroup $O(\hat{L})$ of $\Aut(V_L)$; let $\hat{g}\in O(\hat{L})$ be a (standard) lift of an isometry $g\in O(L)$ (cf. Section \ref{sec:2.3}).  
If $L$ has norm $2$ vectors, called roots, or $g$ has fixed-points, then the weight $1$ subspace of the cyclic orbifold $V_L^{\hat{g}}$ is non-trivial and $\aut(V_L^{\hat{g}})$ is infinite.  
Hence, we focus on the case where $L$ is rootless and $g$ is fixed-point free.
Indeed, many interesting finite groups, such as the orthogonal groups $O^+(10,2)$ and $\Omega^-_8(3){:}2$, are obtained as the automorphism groups of cyclic orbifolds of lattice VOAs under this setting (\cite{Gr1,Sh04,CLS}).  
Some other examples are studied in \cite{Lam18b,LamCFT,BLS}, which shows that the cyclic orbifold VOA $V_{\Lambda_h}^{\hat{h}}$ has extra automorphisms  for a certain isometry  $h$ of the Leech lattice $\Lambda$, where $\Lambda_h$ denotes the coinvariant lattice of $\Lambda$ associated with $h$.

In this article, we deal with the case where $g$ is a fixed-point free isometry of odd prime order and classify all rootless even lattices such that $V_L^{\hat{g}}$ has extra automorphisms. The main idea is to study the irreducible modules of $V_L^{\hat{g}}$ which have similar properties as the module $V_L(1)$, where  $V_{L}(j)=\{ v\in V_{L}\mid \hat{g}(v)= e^{2\pi \sqrt{-1} j/|\hat{g}|} v\}$, $0\leq j \leq |\hat{g}|-1$. 
 
Recall that the case where the order of $g$ is $2$, that is, $g$ is the $-1$-isometry, has been studied in \cite{Sh04};
$V_L^{\hat{g}}$ has extra automorphisms if and only if $L$ can be constructed by Construction B from a doubly even binary code (see \cite[Chapter 7.5]{CS} for the details about Construction B).
Moreover, the $\tau$-conjugate $V_L(1)\circ \tau$ is of twisted type  for some $\tau\in\Aut(V_L^{\hat{g}})$ if and only if $L$ is isometric to $\sqrt2E_8$ or the Barnes-Wall lattice of rank $16$. 
Please refer to Definition \ref{Mconj} and Section \ref{S:tw} for the definitions of $\tau$-conjugates and modules of twisted type. Notice that both lattices can be constructed by Construction B.

We consider a generalized Construction B in Section \ref{Sec:ConstAB}, 
which is different from \cite[Chapter 7.5]{CS} if $p>2$,
 and generalize the results of \cite{Sh04} in Sections 5, 6 and 7. Our main theorem is as follows:

\begin{maintheorem}
Let $L$ be a rootless even lattice of rank $m$ and $p$ a prime number.
Let $g$ be a fixed-point free isometry of $L$ of order $p$ and let $\hat{g}$ be a (standard) lift of $g$.
Then $V_L^{\hat{g}}$ has extra automorphisms if and only if one of the following holds:
\begin{enumerate}[{\rm (i)}]
\item $p=2$ and $L$ can be constructed by Construction B from a doubly even binary code of length $m$. In this case, $g$ is the $-1$-isometry. 
\item $p$ is an odd prime and $L$ can be constructed by Construction B from a self-orthogonal code of length $m/(p-1)$ over $\Z_p$ whose dual code contains a codeword $e$ of Hamming weight $m/(p-1)$; $g$ is given by $g_{\Delta,e}$ as defined in \eqref{Eq:gre}. 
\item $p=11$ and $L$ is isometric to the coinvariant lattice $\Lambda_{11A}$ of the Leech lattice associated with the conjugacy class $11A$ of $O(\Lambda)$; $g$ is given by the restriction of the $11A$ element on $\Lambda_{11A}$. 
\item $p=23$ and $L$ is isometric to the coinvariant lattice $\Lambda_{23A}$ of the Leech lattice associated with the conjugacy class $23A$ of $O(\Lambda)$; $g$ is given by the restriction of the $23A$ element on $\Lambda_{23A}$. 
\end{enumerate}
\end{maintheorem}

As a corollary of Main Theorem 1, we obtain the following:

\begin{maincoro} Let $L$, $p$ and $g$  be as in  Main Theorem 1.
If $L$ does not satisfy (i), (ii), (iii) and (iv) in Main Theorem 1, then $\aut(V_L^{\hat{g}})\cong N_{\Aut(V_L)}(\langle\hat{g}\rangle)/\langle\hat{g}\rangle$.
\end{maincoro}

We also classify rootless even lattices $L$ such that the $\tau$-conjugate $V_L(1)\circ\tau$ is of twisted type for some $\tau\in\Aut(V_L^{\hat{g}})$ in Theorem \ref{T:(II)}.
In particular, we obtain the following:

\begin{maincoro} Let $L$, $p$ and $g$  be as in  Main Theorem 1. 
Then the following are equivalent:
\begin{itemize}
\item There exists an automorphism $\tau$ of $V_L^{\hat{g}}$ such that $V_L(1)\circ\tau$ is of twisted type;
\item $L$ is isometric to the coinvariant lattice $\Lambda_{pX}$ of the Leech lattice $\Lambda$ associated with the conjugacy class $pX\in\{2A,-2A,3B,3C,5B,5C,7B,11A,23A\}$ 
and $g$ is given by the restriction of the $pX$ element on $\Lambda_{pX}$.
\end{itemize}
\end{maincoro}

Our results consist of existence part of extra automorphisms of $V_L^{\hat{g}}$ and classification part of rootless even lattices $L$ such that $V_L^{\hat{g}}$ has extra automorphisms.

For the existence, we explicitly construct an extra automorphism of $V_L^{\hat{g}}$ if $L$ can be constructed by Construction B from a code over $\Z_p$ (Corollary \ref{C:extra}).
More generally, we deal with the case where $L$ is constructed from a subgroup of $\bigoplus_{i=1}^t \Z_{k_i}$ (Theorem \ref{thm:extra}).
The main idea is to generalize the triality automorphisms associated with the root lattice of type $A_1$ in \cite[Chapter 11]{FLM} to other root lattices of type $A_{k-1}$ $(k\ge2)$. 
In addition, we prove that $V_L^{\hat{g}}$ has extra automorphisms if $L$ is the coinvariant lattice $\Lambda_h$ of the Leech lattice $\Lambda$ associated with some $h\in O(\Lambda)$ by using the argument as in \cite{LamCFT} (Proposition \ref{P:extwp}).

For the classification, we prove that a rootless even lattice $L$ can be constructed by Construction B if $V_L(1)\circ\tau$ is of untwisted type but is not a submodule of $V_L$ for some $\tau\in\Aut(V_L^{\hat{g}})$ (Proposition \ref{P:(I)}).
In this case, comparing the characters and fusion products of $V_L(1)$ and $V_L(1)\circ\tau$, we obtain an element $\lambda+L$ in the discriminant group $\mathcal{D}(L)$ of $L$ satisfying certain properties on $|(\lambda+L)(2)|$ and the order of $\lambda+L$ in $\mathcal{D}(L)$, which implies that $L$ can be constructed by Construction B from a code over $\Z_p$ (Theorem \ref{T:ChaB}).
In addition, we classify the rootless even lattices $L$ if $V_L(1)\circ\tau$ is of twisted type for some $\tau\in\Aut(V_L^{\hat{g}})$.
In this case, the quantum dimension of the irreducible $V_L^{\hat{g}}$-module of twisted type is $1$ (\cite{AbeLY,DJX}).
Hence there are a few possibilities for $\rank L$ and the discriminant group $\mathcal{D}(L)$, which is sufficient to classify all rootless even lattices $L$.
Furthermore, these lattices can be embedded into the Leech lattice as coinvariant lattices (Propositions \ref{P:357}, \ref{P:11A} and \ref{P:23A}).

The organization of the article is as follows: 
In Section 2, we recall some basic facts about lattices, lattice VOAs and the automorphism groups of lattice VOAs. We also recall the conjugates of modules. 
In Section 3, we review the irreducible modules over the orbifold $V_L^{\hat{g}}$ and study their conjugates by some automorphisms. 
In Section 4, we review the construction of certain even lattices using codes over $\Z_p$, which we call Construction A and Construction B.  We also discuss some characterizations of these lattices. 
In Section 5, we explain how to divide our arguments to two parts under the assumption that $g\in O(L)$ is fixed-point free of odd prime order.
In Section 6, we discuss the case where $V_L^{\hat{g}}$ has an extra automorphism $\tau$ such that $V_L(1)\circ\tau$ is of untwisted type.
We prove that such a $L$ is constructed by Construction B.
In addition, we construct an extra automorphism of $V_L^{\hat{g}}$ if $L$ is constructed by Construction B.
In Section 7, we discuss the case where $V_L^{\hat{g}}$ has an extra automorphism $\tau$ such that $V_L(1)\circ\tau$ is of twisted type.
We classify all possible rootless even lattices $L$; in fact, these are coinvariant lattices of the Leech lattice. 
In addition, we prove the existence of extra automorphisms of $V_L^{\hat{g}}$ for such lattices $L$ by using the Leech lattice VOA.

\section*{Acknowledgments}
The authors would like to thank the reviewers for giving them very useful suggestions and comments.

\medskip

\begin{center}
{\bf Notations}
\begin{small}
\begin{longtable}{ll} 
$(\cdot|\cdot)$& the positive-definite symmetric bilinear form of $\R^m$.\\
$\langle\cdot|\cdot\rangle$& the inner product on $\Z_p^t$.\\
$\mathcal{D}(L)$& the discriminant group $L^*/L$ of an even lattice $L$.\\
$\varepsilon$& the conformal weight \eqref{Eq:esp} of an irreducible $\hat{g}^s$-twisted $V_L$-module.\\
$\varepsilon(M)$& the conformal weight of a module $M$ over a VOA.\\
$g_\Delta$& the isometry in \eqref{Eq:g} of the root lattice of type $A_k$ associated with a base $\Delta$.\\
$g_{\Delta,e}$& the isometry in \eqref{Eq:gre} of $L_A(C)$ associated with a base $\Delta$ of $R$ and $e\in \mathcal{D}(R)$.\\
$L_A(C),L_B(C)$& the lattices in \eqref{Eq:ConstA} and \eqref{Eq:ConstB} associated with a subgroup $C$ of $\bigoplus_{i=1}^{t}\Z_{k_i}$.\\
$L^g$, $L_g$& $L^g=\{\alpha\in L\mid g\alpha=\alpha\}$ and $L_g=\{\alpha\in L\mid (\alpha| L^g)=0\}$ for $g\in O(L)$.\\
$L^*$& the dual lattice $\{ \alpha\in \Q\otimes_\Z L\mid (\alpha|  L) \subset \Z \}$ of a lattice $L$.\\
$\Lambda$& the Leech lattice.\\
$\lambda_1,\dots,\lambda_{k-1}$& fundamental weights of the root lattice of type $A_{k-1}$.\\
$\lambda_x$& $(\lambda_{x_1}^1,\dots,\lambda_{x_t}^t)\in R_1^*\perp\dots\perp R_t^*$ associated with $x=(x_i)\in\bigoplus_{i=1}^t\Z_{k_i}$ as in \eqref{Eq:lambdac}.\\
$M\circ\tau$& the $\tau$-conjugate of a $V$-module $M$ for $\tau\in\Aut(V)$ as in Definition \ref{Mconj}.\\
$O(L)$& the isometry group of a lattice $L$.\\
$\hat{g}$& a standard lift in $O(\hat{L})$ of an isometry $g$ of $L$.\\
$\rho_{\Delta}$& the Weyl vector associated with a base $\Delta$ of a root system.\\
$R$& the orthogonal sum of root lattices $R_1,\dots,R_t$ of type $A_{k_1-1},\dots,A_{k_t-1}$.\\
$\sigma_x$& the inner automorphism $\exp(-2\pi\sqrt{-1}x_{(0)})$ of a VOA $V$ associated with $x\in V_1$.\\
$S(2)$& $S(2)=\{\alpha\in S\mid (\alpha| \alpha)=2\}$ for a subset $S\subset \R^m$.\\
$V^\tau$& the fixed-point subVOA $\{v\in V\mid \tau v=v\}$ of $V$ for $\tau\in\Aut(V)$.\\
$\chi_\Delta$& the vector $(\rho_{\Delta_1}/k_1,\dots,\rho_{\Delta_t}/k_t)$ as in \eqref{Eq:chi}.\\
$\Z_k^\times$& the unit group of $\Z_k=\Z/k\Z$.\\
\end{longtable}
\end{small}
\end{center}

\addtocounter{table}{-1}

\section{Preliminaries}
\subsection{Lattices}
Let $L$ be a lattice with the positive-definite bilinear form $( \cdot | \cdot )$.
The \emph{dual lattice} $L^*$ of $L$ is defined to be the lattice
\[
L^*=\{ \alpha\in \Q\otimes_\Z L\mid (\alpha|  L) \subset \Z \}.
\]
A sublattice $K$ of $L$ is said to be \emph{full} if  $\rank K=\rank L$, namely, $\Q\otimes_\Z K\cong \Q\otimes_\Z L$. 
We denote the isometry group of $L$ by $O(L)$. 
Note that $O(L)$ is finite.

Assume that $L$ is \emph{even}, that is, $(\alpha|\alpha)\in 2\Z$ for all $\alpha\in L$.
Then $(\alpha|\beta)\in\Z$ for all $\alpha,\beta\in L$, and $L\subset L^*$.
The \emph{discriminant group} $\mathcal{D}(L)$ is defined to 
be the quotient group $L^*/L$, which is a 
finite  abelian group.  
The (square) \emph{norm} of a vector $\alpha\in \Q\otimes_\Z L$ is defined to be the value $(
\alpha| \alpha )\in\Q$.
For a subset $S\subset \Q\otimes_\Z L$, we denote by
\[
  S(2)=\{ \alpha\in S\mid (\alpha| \alpha) =2\},
\]
the set of all norm $2$ vectors in $S$.
We call a vector of norm $2$ a \emph{root}; in fact, for an even lattice $L$, $L(2)$ forms a simply laced root system.
An even lattice  $L$ is said to be \emph{rootless} if $L(2)=\emptyset$. 

Let $g\in O(L)$ of order $n$.
The fixed-point sublattice $L^g$ of $g$ and the \emph{coinvariant lattice} $L_g$ of $L$ associated with $g$ are defined to be 
\begin{equation}
L^g=\{\alpha\in L\mid g\alpha=\alpha\}\quad\text{and}\quad L_g=\{\alpha\in L\mid (\alpha| L^g)=0\},\label{Eq:L_g}
\end{equation}
respectively.
Clearly, $\rank L_g+\rank L^g=\rank L$ and the restriction of $g$ to $L_g$ is a fixed-point free isometry of order $n$; we often use the same symbol $g$ for the restriction.
 
We now give the following lemma, which will be used later.

\begin{lemma}\label{Lem:1-g+} Let $L$ be a lattice and let $g$ be a fixed-point free isometry of $L$.
Then  we have $((1-g)L)^*=(1-g)^{-1}L^*$.
\end{lemma}
\begin{proof}
Since $g$ is fixed-point free, $1$ is not an eigenvalue of $g$ and $(1-g)$ is non-singular.

It follows from $(\alpha|(1-g^{-1})\beta)=((1-g)\alpha|\beta)$ for $\alpha,\beta\in L$ that 
$((1-g^{-1})L)^*=(1-g)^{-1}L^*$.
Since $(1-g) L = -(1-g^{-1}) g L = (1-g^{-1}) L$, we have $((1-g)L)^*=(1-g)^{-1}L^*$.
\end{proof}

\subsection{VOAs, modules and automorphisms}
A \emph{vertex operator algebra} (VOA) $(V,Y,\vac,\omega)$ is a $\Z$-graded vector space $V=\bigoplus_{m\in\Z}V_m$ over the complex field $\C$ equipped with a linear map
$$Y(a,z)=\sum_{i\in\Z}a_{(i)}z^{-i-1}\in ({\rm End}\ V)[[z,z^{-1}]],\quad a\in V,$$
the \emph{vacuum vector} $\vac\in V_0$ and the \emph{conformal vector} $\omega\in V_2$
satisfying certain axioms (\cite{Bo,FLM}). 
The operators $L(m)=\omega_{(m+1)}$, $m\in \Z$, satisfy the Virasoro relation:
$$[L{(m)},L{(n)}]=(m-n)L{(m+n)}+\frac{1}{12}(m^3-m)\delta_{m+n,0}c\ {\rm id}_V,$$
where $c\in\C$ is called the \emph{central charge} of $V$.

Let $V$ be a VOA.
A $V$-\emph{module} $(M,Y_M)$ is a $\C$-graded vector space $M=\bigoplus_{m\in\C} M_{m}$ equipped with a linear map
$$Y_M(a,z)=\sum_{i\in\Z}a_{(i)}z^{-i-1}\in (\End\ M)[[z,z^{-1}]],\quad a\in V$$
satisfying a number of conditions (\cite{FHL,DLM2}).
We often denote it by $M$.
If $M$ is irreducible, then there exists $\varepsilon(M)\in\C$ such that $M=\bigoplus_{m\in\Z_{\geq 0}}M_{\varepsilon(M)+m}$ and $M_{\varepsilon(M)}\neq0$; the number $\varepsilon(M)$ is called the \emph{conformal weight} of $M$.
Let $\irr(V)$ denote the set of all isomorphism classes of irreducible $V$-modules.
We often identify an element in $\irr(V)$ with its representative.
Assume that the fusion product $\boxtimes$ is defined on $V$-modules (cf.\ \cite{HL}).
An irreducible $V$-module $M^1$ is called a \emph{simple current module} if for any irreducible $V$-module $M^2$, the fusion product $M^1\boxtimes M^2$ is also an irreducible $V$-module.

A linear automorphism $\tau$ of $V$ is called an (VOA) \emph{automorphism} of $V$ if $$ \tau\omega=\omega\quad {\rm and}\quad \tau Y(v,z)=Y(\tau v,z)\tau\quad \text{ for all } v\in V.$$
We denote the group of all automorphisms of $V$ by $\Aut(V)$. 
Note that $\Aut(V)$ preserves $V_n$ for every $n\in\Z$.
For an automorphism $\tau$ of $V$, let $V^\tau$ denote the orbifold of $V$ for $\langle\tau\rangle$;  $V^\tau=\{v\in V\mid \tau v=v\}$ is the fixed-point subVOA.

\begin{definition}\label{Mconj}
Let $V$ be a VOA and let $M=(M,Y_M)$ be a $V$-module.
For $\tau\in\Aut(V)$, the \emph{$\tau$-conjugate} of $M$ is the $V$-module $(M\circ \tau, Y_{M\circ \tau} )$ defined as follows:
\begin{equation}
\begin{split}
& M\circ \tau =M \quad \text{ as a vector space;}\\
& Y_{M\circ \tau} (a, z) = Y_M(\tau a, z)\quad \text{ for any } a\in V.
\end{split}\label{Eq:conjact}
\end{equation}
\end{definition}

If $V$-modules $M^1$ and $M^2$ are isomorphic, then $M^1\circ \tau$ and $M^2\circ \tau$ are also isomorphic.
Hence, the $\tau$-conjugate defines an $\Aut(V)$-action on the isomorphism classes of $V$-modules.  
The following lemma is immediate:
\begin{lemma}\label{Lem:conj} Let $M,M^1,M^2$ be $V$-modules and let $\tau\in\Aut(V)$.
Assume that the fusion product is defined on $V$-modules.
\begin{enumerate}[{\rm (1)}]
\item If $M$ is irreducible, then so is $M\circ \tau$.
In addition, $\varepsilon(M)=\varepsilon(M\circ \tau)$ and $\dim M_{\varepsilon(M)+i}=\dim(M\circ\tau)_{\varepsilon(M)+i}$ for all $i\in\Z_{\ge0}$;
\item $(M^1\circ \tau)\boxtimes (M^2\circ \tau)\cong (M^1\boxtimes M^2)\circ \tau$.
In particular, if $M$ is a simple current module, then so is $M\circ\tau$.
\end{enumerate}
\end{lemma}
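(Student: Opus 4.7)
My plan is to prove both parts by exploiting the fact that $M$ and $M\circ\tau$ share the same underlying vector space, and that the two module structures differ only by precomposing the vertex operators with the automorphism $\tau$ of $V$.

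For part (1), I would first observe that because $\tau$ is a VOA automorphism we have $\tau\omega=\omega$, so
\[
Y_{M\circ\tau}(\omega,z)=Y_M(\tau\omega,z)=Y_M(\omega,z).
\]
Thus the Virasoro action on $M\circ\tau$ coincides with that on $M$, which immediately yields $\varepsilon(M\circ\tau)=\varepsilon(M)$ and $\dim(M\circ\tau)_i=\dim M_i$ for every $i$. For irreducibility, I would note that a subspace $N\subset M$ is a $V$-submodule of $M$ if and only if it is stable under every operator $a_{(i)}$ for $a\in V$ and $i\in\Z$; since $\tau$ is a bijection on $V$, this is equivalent to being stable under every $(\tau a)_{(i)}$, i.e.\ being a $V$-submodule of $M\circ\tau$. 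Hence $M$ and $M\circ\tau$ share the same lattice of $V$-submodules, and irreducibility transfers.

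For part (2), the key idea is that $\tau$-conjugation induces a canonical bijection between the spaces of intertwining operators of the corresponding types. Given an intertwining operator $\mathcal{Y}$ of type $\binom{M^3}{M^1,M^2}$, I would check that the very same formula $u\mapsto\mathcal{Y}(u,z)$ defines an intertwining operator of type $\binom{M^3\circ\tau}{M^1\circ\tau,M^2\circ\tau}$: the Jacobi identity involves $Y(a,z)$ for $a\in V$, and after $\tau$-twisting all three slots simultaneously, the identity evaluated at $a$ becomes the original identity evaluated at $\tau a\in V$, which holds by hypothesis. Applying the universal property of the fusion product on both sides then yields the natural isomorphism
\[
(M^1\circ\tau)\boxtimes(M^2\circ\tau)\;\cong\;(M^1\boxtimes M^2)\circ\tau.
\]
For the simple current assertion, given any irreducible $V$-module $P$, I would set $Q=P\circ\tau^{-1}$, which is irreducible by (1), and compute
\[
(M\circ\tau)\boxtimes P\;\cong\;(M\circ\tau)\boxtimes(Q\circ\tau)\;\cong\;(M\boxtimes Q)\circ\tau,
\]
which is irreducible by the simple current hypothesis on $M$ together with (1). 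Hence $M\circ\tau$ is also a simple current.

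I do not anticipate a major obstacle; the only point requiring a little care is verifying that the intertwining-operator axioms transfer cleanly under simultaneous $\tau$-twisting of all three slots, so that the universal property of the fusion product can be invoked on both sides to deduce the stated isomorphism.
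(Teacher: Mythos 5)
Your proof is correct, and it is precisely the standard verification the paper has in mind: the authors state Lemma \ref{Lem:conj} with the single remark ``The following lemma is immediate,'' offering no proof, and your argument fills in exactly those details --- $\tau\omega=\omega$ gives the graded-dimension and conformal-weight claims together with the coincidence of submodule lattices for (1), and the bijection on intertwining-operator spaces induced by simultaneously $\tau$-twisting all three slots (with universality transferring because the correspondence is invertible via $\tau^{-1}$) gives (2), including the simple-current statement via $P=(P\circ\tau^{-1})\circ\tau$. No gaps; nothing further is needed.
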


An irreducible (untwisted or twisted) module $M$ of $V_L$ is said to be \emph{$\hat{g}$-stable} if 
$M\circ \hat{g}\cong M$. In this case, $\hat{g}$ acts on $M$ and we denote  the eigenspaces of $\hat{g} $ on $M$ by 
\begin{equation}\label{jeigen}
M(j)= \{ x\in M\mid  \hat{g} x= e^{2\pi \sqrt{-1}j / n} x\}, \quad  0 \leq j\leq n-1,\ n=|\hat{g}|.    
\end{equation}

\subsection{Lattice VOAs and their automorphism groups}\label{sec:2.3}
Next we recall a few facts about lattice VOAs and their automorphism groups from \cite{FLM,DN, LY2}.

Let $L$ be an even lattice and let $(\cdot |\cdot )$ be the positive-definite symmetric bilinear form on $\R\otimes_\Z L$.
Let $M(1)$ be the Heisenberg VOA associated with $\mathfrak{h}=\C\otimes_\Z L$ and the form $(\cdot|\cdot)$ extended $\C$-bilinearly. Let $\C\{L\}=\bigoplus_{\alpha\in L}\C e^\alpha$ be the twisted group algebra such that $e^\alpha e^\beta=(-1)^{(\alpha|\beta)}e^{\beta}e^{\alpha}$, for  $\alpha,\beta\in L$.
The lattice VOA $V_L$ associated with $L$ is defined to be $M(1) \otimes \C\{L\}$ (\cite{FLM}). 

Let $\hat{L}$ be a central extension of $L$ associated with the commutator map $(\cdot|\cdot)\pmod2$ on $L$.
Let $\Aut(\hat{L})$ be the automorphism group of $\hat{L}$.
For $\varphi\in \Aut (\hat{L})$, we define the element $\bar{\varphi}\in\Aut(L)$ by $\varphi(e^\alpha)\in\{\pm e^{\bar\varphi(\alpha)}\}$, $\alpha\in L$.
Set $O(\hat{L})=\{\varphi\in\Aut(\hat L)\mid \bar\varphi\in O(L)\}.$
For $x\in\h$, we set 
\begin{equation}
\sigma_x=\exp(-2\pi\sqrt{-1}x_{(0)})\in\Aut(V_L).\label{Eq:sigma}
\end{equation}
Then $\{\varphi\in O(\hat{L})\mid \bar{\varphi}=id\}=\{\sigma_x\mid x\in ((1/2)L^*)/L^*\}.$
By \cite[Proposition 5.4.1]{FLM}, we have an exact sequence
\[
  1 \to \{\sigma_x\mid x\in ((1/2)L^*)/L^*\} \to  O(\hat{L})\stackrel{-}\rightarrow  O(L) \to  1.
\]
Note that $\aut (V_L) = N(V_L)\,O(\hat{L})$ (\cite{DN}), where $N(V_L)=\l\la \exp(a_{(0)}) \mid a\in (V_L)_1 \r\ra$.

An element $\phi\in O(\hat{L})$ is called a \emph{lift} of $g\in O(L)$ if $\bar{\phi}=g$ and it is called a \emph{standard lift} of $g\in O(L)$ if $\phi$ is a lift of $g$ and $\phi(e^\alpha)=e^\alpha$ for $\alpha\in L^g$.
The following lemma can be found in \cite{EMS,LS20}.
\begin{lemma}\label{L:standardlift}
Let $g\in O(L)$ and let $\hat{g}\in O(\hat{L})$ be a standard lift of $g$.
\begin{enumerate}[{\rm (1)}]
\item Any standard lift of $g$ is conjugate to $\hat{g}$ by an element in $\Aut(V_L)$.
\item If $g$ has odd order, then $|\hat{g}|=|g|$.
\end{enumerate}
\end{lemma}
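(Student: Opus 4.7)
The plan is to prove the three parts in order, starting with (2) so that (3) follows immediately, and finishing with (1).

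For (2), the action of the standard lift on the lattice generators can be written as $\hat{g}(e^\alpha) = \epsilon(\alpha)\, e^{g\alpha}$, where $\epsilon : L \to \{\pm 1\}$ is determined by the $2$-cocycle of $\hat{L}$ together with the normalization $\epsilon|_{L^g} = 1$. Iterating $n = |g|$ times yields $\hat{g}^n(e^\alpha) = P(\alpha)\, e^\alpha$ with $P(\alpha) = \prod_{i=0}^{n-1}\epsilon(g^i\alpha) \in \{\pm 1\}$. Since $\overline{\hat{g}^n} = 1$, one has $\hat{g}^n = \sigma_x$ for some $x \in ((1/2)L^*)/L^*$, and it suffices to show $P \equiv 1$. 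The cocycle contributions telescope around each $g$-orbit, so $P$ is a $g$-invariant homomorphism $L \to \{\pm 1\}$. When $n$ is odd, taking the symmetrization $\beta = \sum_{i=0}^{n-1} g^i\alpha \in L^g$ gives $P(\beta) = P(\alpha)^n = P(\alpha)$ while $P(\beta) = 1$ by the normalization, forcing $P \equiv 1$. When $g$ is fixed-point free, the normalization $\epsilon|_{L^g} = 1$ is vacuous (since $L^g = 0$), and one invokes the explicit construction from the cited references, exploiting the invertibility of $1 - g$ on $\h$ to choose $\epsilon$ so that $P \equiv 1$.

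For (3), assume $g$ is fixed-point free and $\gcd(s, n) = 1$. Then $g^s$ is also fixed-point free of order $n$, so $L^{g^s} = 0$ and the standard lift condition for $g^s$ is vacuous. By (2), $|\hat{g}| = n$, so $\overline{\hat{g}^s} = g^s$ in $O(L)$, and $\hat{g}^s$ is therefore a standard lift of $g^s$.

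For (1), let $\hat{g}'$ be another standard lift of $g$. Then $\hat{g}' \hat{g}^{-1} = \sigma_x$ for some $x \in ((1/2)L^*)/L^*$, and the common condition that both lifts fix $e^\alpha$ for $\alpha \in L^g$ forces $(x, L^g) \subset \Z$. The averaging operator $(1/n)\sum_i g^i$ maps $L$ into $\h^g \cap (1/n)L$, so $L^g = L \cap \h^g$ spans $\h^g$ over $\R$, and hence $(L^g)^\perp = (\h^g)^\perp = (1-g)\h$ inside $\h$. Therefore a real representative of $x$ can be written as $(1 - g^{-1}) y$ modulo $L^*$ for some $y \in \h$. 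A direct computation on $e^\alpha$ then gives $\sigma_y \hat{g} \sigma_y^{-1} = \sigma_{(1 - g^{-1}) y}\, \hat{g} = \sigma_x \hat{g} = \hat{g}'$ as elements of $\Aut(V_L)$, since $\sigma_y$ lies in $N(V_L) \subset \Aut(V_L)$ for any $y \in \h$. The main obstacle I anticipate is the fixed-point-free case of (2), where the clean symmetrization argument for odd $n$ fails and one must perform a more delicate cocycle computation to pin down the signs; this is spelled out in the references EMS and LS20.
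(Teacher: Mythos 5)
A preliminary remark: the paper offers no proof of this lemma at all --- its ``proof'' is the citation to \cite{EMS,LS20} --- so your attempt can only be measured against the standard arguments in those references, which is indeed the route you follow. Most of it is sound: the telescoping computation does show that $P(\alpha)=\prod_{i=0}^{n-1}\epsilon(g^i\alpha)$ is a $g$-invariant homomorphism $L\to\{\pm1\}$, and the symmetrization $\beta=\sum_{i=0}^{n-1}g^i\alpha\in L^g$ correctly settles (2) for odd $n$; part (3) is, as you say, vacuous once $L^{g^s}=L^g=0$ (your appeal to (2) there is unnecessary, since $\overline{\hat{g}^s}=g^s$ holds regardless of $|\hat{g}|$); and conjugation by torus elements, $\sigma_y\hat{g}\sigma_y^{-1}=\sigma_{(1-g)y}\hat{g}$, is the right mechanism for (1). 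One caveat in (1): the step from $(x|L^g)\subset\Z$ to $x\in(1-g)\h+L^*$ does not follow merely from $(1-g)\h=(\h^g)^\perp$; it is equivalent to the statement that the orthogonal projection of $L^*$ to $\h^g$ is the \emph{full} dual lattice of $L^g$, a true but not free fact (one realizes the needed correction inside $\Q\otimes_\Z L_g$ via the nondegeneracy of the discriminant pairing of $L_g$). Your ``therefore'' conceals exactly this point.

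The genuine gap is the fixed-point-free, even-order case of (2), and the mechanism you propose there cannot work as described. When $L^g=0$, not only is every lift standard, but \emph{all lifts of $g$ have the same $n$-th power}: if $\hat{g}'=\sigma_x\hat{g}$, then $(\hat{g}')^n=\sigma_{\sum_{i=0}^{n-1}g^ix}\,\hat{g}^n=\hat{g}^n$, because $\sum_{i=0}^{n-1}g^i=0$ for fixed-point-free $g$. So there is no freedom to ``choose $\epsilon$ so that $P\equiv1$''; the homomorphism $P$ is forced, and the standard cocycle computation (as in \cite{EMS}) gives $P(\alpha)=(-1)^{(\alpha|g^{n/2}\alpha)}$ for even $n$. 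The actual content of (2) in this case is therefore the lattice-theoretic parity statement $(\alpha|g^{n/2}\alpha)\in2\Z$ for all $\alpha\in L$, which holds \emph{because} $g$ is fixed-point free (it fails for general even-order isometries, which is exactly why $|\hat{g}|=2|g|$ can occur) and requires an argument you do not supply. For instance, setting $\beta=\sum_{i=0}^{n/2-1}g^i\alpha$, fixed-point-freeness gives $g^{n/2}\beta=-\beta$, and expanding the even integer $(\beta|g^{n/2}\beta)=-(\beta|\beta)$ yields $\tfrac{n}{2}(\alpha|g^{n/2}\alpha)\equiv0\pmod{2}$, which disposes of $n\equiv2\pmod4$; the case $n\equiv0\pmod4$ needs a further step (e.g.\ for $n=4$, applying $(v|gv)=0$ on the $(-1)$-eigenspace of $g^2$ to $v=(1+g)\alpha$). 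Nothing in ``the invertibility of $1-g$ on $\h$'' produces this sign cancellation by itself. The case is not idle for the paper, either: Lemma 2.4 (2) is invoked in Section 5 for $g_{\Delta,e}$ of order $\mathrm{lcm}(k_1,\dots,k_t)$, which may well be even.
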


If $g\in O(L)$ is fixed-point free, then $L^g=\{0\}$, which implies that any lift of $g$ is a standard lift.
Note that for $s\in\Z$ with $\gcd(s,|g|)=1$, $g^s$ is also fixed-point free.
\begin{lemma}\label{L:fixedfree}
For any lift $\hat{g}\in O(\hat{L})$ of a fixed-point free isometry $g$,  $|\hat{g}|=|g|$.
\end{lemma}

\begin{proof}
	Let $\eta: L\to \{\pm 1\}$ such that $\hat{g}(e^\alpha) =\eta(\alpha)e^{g\alpha}$. 
	Let $n$ be the order of $g$.
	By the proof of \cite[Proposition 7.2]{EMS}, 
	\[
	\eta(\alpha+ g\alpha +\dots+g^{n-1}\alpha)= (-1)^{(\alpha|g\alpha +\dots+g^{n-1}\alpha )}
	\eta(\alpha) \eta(g\alpha)\cdots\eta(g^{n-1}\alpha). 
	\]
Since $g$ is fixed-point free of order $n$, $\alpha+ g\alpha +\dots+g^{n-1}\alpha=0$ and 	$
(\alpha|g\alpha +\dots+g^{n-1}\alpha )=-(\alpha|\alpha)\in 2\Z$; thus 
\[
\hat{g}^n(e^\alpha) =\eta(\alpha) \eta(g\alpha)\cdots\eta(g^{n-1}\alpha) e^\alpha =e^\alpha.
\]
\end{proof}

Let $g\in O(L)$ be a fixed-point free isometry of prime order $p$ and let $\hat{g}\in O(\hat{L})$ be a lift of $g$.
By the proof of \cite[Theorem 5.15]{LY2}, along with  Lemma \ref{Lem:1-g+}, we obtain
\begin{equation}
\{\sigma_y\mid y\in \h/L^*\}\cap N_{\Aut(V_L)}(\langle\hat{g}\rangle)=\{\sigma_y\mid y\in ((1-g)^{-1}L^*)/L^*\},
\label{Eq:hom}
\end{equation}
where $N_{\Aut(V_L)}(\langle\hat{g}\rangle)$ is the normalizer of $\langle\hat{g}\rangle$ in $\Aut(V_L)$.

Clearly, $N_{\aut(V_L)}(\langle \hat{g}\rangle)$ preserves $V_L^{\hat{g}}$.
Since $V_L$ is a simple current extension of $V_L^{\hat{g}}$ graded by $\Z/p\Z$, $N_{\aut(V_L)}(\langle \hat{g}\rangle)/\langle \hat{g}\rangle$ acts faithfully on $V_L^{\hat{g}}$, that is, $$N_{\aut(V_L)}(\langle \hat{g}\rangle)/ \langle \hat{g}\rangle \subset \aut(V_L^{\hat{g}}).$$

\begin{definition}\label{Def:extra}
An automorphism $\tau$ of $V_L^{\hat{g}}$ is said to be \emph{extra} if $\tau\notin N_{\aut(V_L)}(\langle \hat{g}\rangle)/ \langle \hat{g}\rangle$.
\end{definition}

A characterization of extra automorphisms has been obtained in \cite{Sh04}:

\begin{proposition}{\rm (cf.\ \cite[Theorem 3.3]{Sh04}\label{P:stabVL1})}
An automorphism $\tau$ of $V_L^{\hat{g}}$ is extra
if and only if $V_L\circ \tau\not\cong V_L$ as $V_L^{\hat{g}}$-modules.
\end{proposition}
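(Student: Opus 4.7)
The plan is to prove the two implications separately, with the forward direction by direct unwinding of definitions and the converse by the theory of simple current extensions.

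For the forward direction, suppose $\tau$ is not extra. Then $\tau$ is the restriction to $V_L^{\hat g}$ of some $\tilde\tau \in N_{\Aut(V_L)}(\langle \hat g\rangle)$. Since $\tilde\tau$ is a VOA automorphism of $V_L$, for every $v \in V_L^{\hat g}$ one has
$$\tilde\tau \circ Y_{V_L}(v,z) = Y_{V_L}(\tilde\tau v,z)\circ \tilde\tau = Y_{V_L}(\tau v,z)\circ \tilde\tau = Y_{V_L\circ\tau}(v,z)\circ \tilde\tau,$$
so the linear bijection $\tilde\tau \colon V_L \to V_L$ realizes a $V_L^{\hat g}$-module isomorphism $V_L \simto V_L\circ\tau$.

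For the converse, I would start from the $\hat g$-eigenspace decomposition $V_L = \bigoplus_{s=0}^{p-1} V_L(s)$. Since $V_L$ is a $\Z/p\Z$-graded simple current extension of $V_L^{\hat g}$, the summands $V_L(s)$ are pairwise non-isomorphic irreducible simple current $V_L^{\hat g}$-modules. By Lemma \ref{Lem:conj}, $V_L\circ\tau \cong \bigoplus_s V_L(s)\circ\tau$ as $V_L^{\hat g}$-modules, and each $V_L(s)\circ\tau$ remains an irreducible simple current. Combined with the assumption $V_L\circ\tau \cong V_L$, this forces the set $\{V_L(s)\circ\tau\}_s$ of isomorphism classes to coincide with $\{V_L(s)\}_s$, so that $\tau$ induces a permutation $\sigma$ of $\Z/p\Z$ with $V_L(s)\circ\tau \cong V_L(\sigma(s))$; compatibility with fusion (Lemma \ref{Lem:conj} (2)) forces $\sigma$ to be a group automorphism of $\Z/p\Z$, hence multiplication by some $k \in (\Z/p\Z)^\times$.

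The substantive step is to promote this module-level permutation into an honest VOA automorphism of $V_L$. I would invoke the uniqueness theory of simple current extensions (as used in \cite{Sh04,Sh07}): the $\Z/p\Z$-graded VOA structure on $V_L$ is determined, up to isomorphism, by the underlying $V_L^{\hat g}$-module decomposition together with a normalization of the intertwining operators that govern the products $V_L(s)\boxtimes V_L(t) \to V_L(s+t)$. The module isomorphisms $V_L(s)\circ\tau \simto V_L(\sigma(s))$, unique up to scalar by Schur's lemma, can be rescaled to match the two sets of intertwining data; this assembles into a linear bijection $\tilde\tau \colon V_L \to V_L$ that is a VOA automorphism extending $\tau$ modulo $\langle\hat g\rangle$. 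Since $\tilde\tau(V_L(s)) = V_L(\sigma(s))$, conjugation by $\tilde\tau$ sends $\hat g$ into $\langle \hat g\rangle$, placing $\tilde\tau$ in $N_{\Aut(V_L)}(\langle \hat g\rangle)$ and proving that $\tau$ is not extra. The main obstacle is precisely this rescaling/compatibility step, i.e., checking that the two cocycles defining $V_L$ and $V_L\circ\tau$ as simple current extensions of $V_L^{\hat g}$ agree after a suitable choice of module isomorphisms; this is the technical heart of the cited results and the only point requiring more than a direct computation.
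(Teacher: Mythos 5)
Your proposal is correct and follows essentially the same route as the paper, which itself offers no in-text proof but defers to \cite[Theorem 3.3]{Sh04} and \cite{Sh07}: the easy direction is the direct computation you give, and the converse rests on lifting $\tau$ to $\Aut(V_L)$ via the uniqueness theory of simple current extensions, exactly the machinery of those references. Your reconstruction is sound at every checkable step (in particular, the fusion argument forcing $\sigma$ to be multiplication by $k\in(\Z/p\Z)^\times$, and the observation that $\tilde\tau^{-1}\hat{g}\tilde\tau$ acts as $\hat{g}^k$ on each eigenspace, hence normalizes $\langle\hat{g}\rangle$), and you correctly identify the rescaling of intertwining data as the one step that genuinely requires the cited results rather than direct computation.
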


In order to determine $\aut(V_L^{\hat{g}})$, the key question is to determine if $N_{\aut(V_L)}(\langle \hat{g}\rangle)/ \langle \hat{g}\rangle$ is equal to $\aut(V_L^{\hat{g}})$ or not, equivalently, if $V_L^{\hat{g}}$ has extra automorphisms or not. 
The main aim of this article is to discuss necessary and sufficient conditions for which the VOA $V_L^{\hat{g}}$ has extra automorphisms. 

\section{Irreducible $V_L^{\hat{g}}$-modules and their conjugates}
Let $L$ be a positive-definite even lattice and let $g$ be a fixed-point free isometry of $L$ of prime order $p$.
Let $\hat{g}\in O(\hat{L})$ be a lift of $g$.
Then the order $|\hat{g}|$ of $\hat{g}$ is also $p$.
It follows from \cite{CM,Mc22,Mi} that $V_L^{\hat{g}}$ is rational, $C_2$-cofinite, self-dual and of CFT-type.
Hence the fusion product is defined on $V_L^{\hat{g}}$-modules.

It was proved in \cite{DRX} that any irreducible $V_L^{\hat{g}}$-module appears as a submodule of an irreducible $\hat{g}^s$-twisted $V_L$-module for some $0\le s\le p-1$.
Note that a $\hat{g}^0$-twisted $V_L$-module is a $V_L$-module.

\begin{definition}\label{D:untw} An irreducible $V_L^{\hat{g}}$-module is said to be \emph{of $\hat{g}^s$-type} if it is a $V_L^{\hat{g}}$-submodule of an irreducible $\hat{g}^s$-twisted $V_L$-module.
In addition, it is said to be \emph{of untwisted type} (resp. \emph{of twisted type}) if it is of $\hat{g}^0$-type (resp. of $\hat{g}^s$-type for some $1\le s\le p-1$).
\end{definition}

In this section, we review the constructions of irreducible $V_L^{\hat{g}}$-modules and describe the conjugates of irreducible $V_L^{\hat{g}}$-modules by some elements in 
$\{\sigma_y\mid y\in \h/L^*\}\cap N_{\Aut(V_L)}(\langle\hat{g}\rangle)$. 

\subsection{Irreducible $V_L^{\hat{g}}$-modules of untwisted type}\label{S:un}
We first discuss the irreducible $V_L^{\hat{g}}$-modules of untwisted type.

Let $\lambda+L\in \mathcal{D}(L)$ and $V_{\lambda+L}=M(1)\otimes\Span_\C\{e^\alpha\mid \alpha\in\lambda+L\}$.
Then $V_{\lambda+L}$ has an irreducible $V_L$-module structure (\cite{FLM}).

Assume that $(1-g)\lambda\notin L$, that is, $g(\lambda+L)\neq\lambda+L$.
Then $V_{\lambda+L}$ is also irreducible as a $V_L^{\hat{g}}$-module.
Note that $V_{\lambda+L}$ is not a simple current module over $V_L^{\hat{g}}$ (\cite{DJX}).

Assume that $(1-g)\lambda\in L$, that is, $g(\lambda+L)=\lambda+L$.
Then $V_{\lambda+L}$ is $\hat{g}$-stable; we fix a $\hat{g}$-module isomorphism $\hat{g}_{\lambda+L}$ of $V_{\lambda+L}$ of order $p$.
For $0\le i\le p-1$, we denote 
\[V_{\lambda+L}(i)=\{v\in V_{\lambda+L}\mid \hat{g}_{\lambda+L}(v)=\exp(2\pi\sqrt{-1}i/p)v\},
\] 
which is an irreducible $V_L^{\hat{g}}$-module. Note also that $V_{\lambda+L}(i)$ is a simple current module over $V_L^{\hat{g}}$ \cite{DJX}. 
Recall from \cite{dong} that the fusion product $V_{\lambda+L} \boxtimes V_{\lambda'+L} = V_{\lambda+\lambda'+L}$ holds for $\lambda+L,\lambda'+L\in\mathcal{D}(L)$.
Hence, if $(1-g)\lambda,(1-g)\lambda'\in L$, then for $0\le i,i'\le p-1$, there exists $0\le i''\le p-1$ such that 
\begin{equation}
V_{\lambda+L}(i)\boxtimes V_{\lambda'+L}(i')\cong V_{\lambda+\lambda'+L}(i'')\label{Eq:fusionun}.
\end{equation}

In addition, for $1\le i\le p-1$, $0\le j\le p-1$, and $\mu+L\in\mathcal{D}(L)\setminus \{L\}$ with $(1-g)\mu\in L$, the conformal weights are given by 
\begin{equation}
\varepsilon(V_L(0))=0,\quad \varepsilon(V_L(i))=1,\quad \varepsilon(V_{\mu+L}(j))=\frac{1}{2}\min\{(v| v)\mid v\in \mu+L\}.\label{Eq:cwun}
\end{equation}

For $\alpha\in L^*$, the inner automorphism $\sigma_{(1-g)^{-1}\alpha}$ belongs to $\Aut(V_L^{\hat{g}})$ by \eqref{Eq:hom}.

\begin{lemma}\label{Lem:conjhom} Let $\alpha,\lambda \in L^*$ with $g(\lambda+L)=\lambda+L$ and let $0\le i,j\le p-1$.
If $(\alpha|\lambda)\in j/p+\Z$, then as $V_L^{\hat{g}}$-modules, $$V_{\lambda+L}(i)\circ \sigma_{(1-g)^{-1}\alpha}\cong V_{\lambda+L}(i-j).$$ 
\end{lemma}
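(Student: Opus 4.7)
The plan is to construct the isomorphism explicitly from the action of the inner automorphism $\sigma_h$ with $h:=(1-g)^{-1}\alpha\in\h$. First I would note that by \eqref{Eq:hom}, $\sigma_h$ lies in $N_{\Aut(V_L)}(\langle\hat{g}\rangle)$, so it descends to an automorphism of $V_L^{\hat{g}}$; and because $\sigma_h$ is already an automorphism of the ambient $V_L$, its action on the $V_L$-module $V_{\lambda+L}$ gives tautologically a $V_L$-module isomorphism $\sigma_h\colon V_{\lambda+L}\simto V_{\lambda+L}\circ\sigma_h$, which is in particular a $V_L^{\hat{g}}$-module isomorphism by restriction.

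The core step is to compute how $\sigma_h$ conjugates the chosen lift $\hat{g}_{\lambda+L}$ of $\hat{g}$ acting on $V_{\lambda+L}$. Because $\hat{g}$ acts on $\h\subset V_L$ as $g$, one has $\hat{g}\sigma_h\hat{g}^{-1}=\sigma_{gh}$; and from $(1-g)h=\alpha$ I obtain $gh=h-\alpha$, whence $\sigma_{gh}=\sigma_h\sigma_{-\alpha}$. Rearranging these identities yields
\[
\sigma_h^{-1}\hat{g}_{\lambda+L}\sigma_h \;=\; \sigma_{-\alpha}\bigl|_{V_{\lambda+L}}\cdot \hat{g}_{\lambda+L}.
\]
Since $\alpha\in L^*$, the automorphism $\sigma_\alpha$ is trivial on $V_L$ and hence acts on the irreducible $V_L$-module $V_{\lambda+L}$ as the scalar $\exp(-2\pi\sqrt{-1}(\alpha|\lambda))=\exp(-2\pi\sqrt{-1}j/p)$. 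I therefore conclude that $\sigma_h^{-1}\hat{g}_{\lambda+L}\sigma_h=\exp(2\pi\sqrt{-1}j/p)\,\hat{g}_{\lambda+L}$ on $V_{\lambda+L}$.

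From this conjugation identity, for $v\in V_{\lambda+L}(i-j)$ the vector $\sigma_h v$ satisfies $\hat{g}_{\lambda+L}(\sigma_h v)=\exp(2\pi\sqrt{-1}i/p)\sigma_h v$, so $\sigma_h$ carries $V_{\lambda+L}(i-j)$ bijectively onto $V_{\lambda+L}(i)$ as subspaces of $V_{\lambda+L}$. I would finish by identifying this subspace, now regarded as a $V_L^{\hat{g}}$-submodule of $V_{\lambda+L}\circ\sigma_h$, with the conjugate module $V_{\lambda+L}(i)\circ\sigma_h$. This identification rests on the facts that $\sigma_h$ stabilizes $V_L^{\hat{g}}$ and that $Y_{V_{\lambda+L}}(\sigma_h a,z)$ commutes with $\hat{g}_{\lambda+L}$ for $a\in V_L^{\hat{g}}$, so the $\hat{g}_{\lambda+L}$-eigenspaces remain $V_L^{\hat{g}}$-submodules inside $V_{\lambda+L}\circ\sigma_h$ and their inherited $V_L^{\hat{g}}$-action coincides by definition with the conjugate structure. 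Restricting $\sigma_h$ then produces the desired $V_L^{\hat{g}}$-module isomorphism $V_{\lambda+L}(i-j)\simto V_{\lambda+L}(i)\circ\sigma_h$.

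The main technical subtlety is this last identification: one must distinguish between the subspace $V_{\lambda+L}(i)\subset V_{\lambda+L}$ with its original $V_L^{\hat{g}}$-module structure and the same subspace viewed inside $V_{\lambda+L}\circ\sigma_h$, where the $V_L^{\hat{g}}$-action is twisted by $\sigma_h$; the conjugation formula for $\hat{g}_{\lambda+L}$ is precisely the bridge that converts the Heisenberg twist into the index shift by $-j$.
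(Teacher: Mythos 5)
Your proof is correct and takes essentially the same route as the paper's: both use that $\sigma_{(1-g)^{-1}\alpha}$ gives a $V_L$-module isomorphism onto the conjugate module, compute its conjugation of the fixed lift $\hat{g}_{\lambda+L}$ via $(1-g)(1-g)^{-1}\alpha=\alpha$, and use that $\sigma_\alpha$ acts on $V_{\lambda+L}$ as the scalar $\exp(-2\pi\sqrt{-1}j/p)$ to read off the shift on $\hat{g}_{\lambda+L}$-eigenspaces. Your closing paragraph merely makes explicit the identification of eigenspaces with conjugate $V_L^{\hat{g}}$-modules that the paper leaves implicit.
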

\begin{proof} Set $y=(1-g)^{-1}\alpha$.
Since $\sigma_y$ is an inner automorphism of $V_L$, it can be extended to a $V_L$-module isomorphism from $V_{\lambda+L}$ to $V_{\lambda+L}\circ \sigma_y\cong V_{\lambda+L}$.
Then $\hat{g}_{\lambda+L}$ acts on $V_{\lambda+L}\circ \sigma_y$ as $\sigma_y\hat{g}_{\lambda+L}\sigma_y^{-1}=\sigma_{(1-g)y}\hat{g}_{\lambda+L}=\sigma_\alpha\hat{g}_{\lambda+L}$.
By the assumption, $\sigma_{\alpha}$ acts on $V_{\lambda+L}$ as the scalar multiplication $\exp(2\pi\sqrt{-1}(-j)/p)$.
Hence $\hat{g}_{\lambda+L}$ acts on $V_{\lambda+L}(i)\circ\sigma_y$ as the scalar multiplication $\exp(2\pi\sqrt{-1}(i-j)/p)$, which proves this lemma.
\end{proof}

\subsection{Irreducible $V_L^{\hat{g}}$-modules of twisted type}\label{S:tw}

Next, we discuss the irreducible $V_L^{\hat{g}}$-modules of twisted type.
We use the descriptions in \cite[Sections 3.2 and 3.3]{AbeLY} (see also \cite{Le, DL, BK04}).
Let $1\le s\le p-1$.
The irreducible $\hat{g}^s$-twisted module $V_{L}^{T}[\hat{g}^s]$ is given by 
\begin{equation*}
V_L^{T}[\hat{g}^s]= M(1)[{g}^s]\otimes T, 
\end{equation*}
where $M(1)[{g}^s]$ is the ``${g}^s$-twisted" free bosonic space and $T$ is an irreducible module for a certain ``$\hat{g}^s$-twisted" central extension of $L$.
Its conformal weight is given in \cite[(6.28)]{DL} as
\begin{equation}
\varepsilon=\varepsilon(V_L^{T}[\hat{g}^s])=\frac{1}{4p^2}\frac{m}{p-1}\sum_{k=1}^{p-1}k(p-k)=\frac{m(p+1)}{24p},\label{Eq:esp}
\end{equation}
where $m$ is the rank of $L$.   
By the explicit construction of $V_L^{T}[\hat{g}^s]$, we obtain the following:
\begin{lemma}\label{L:cwtw} The conformal weight of any irreducible $V_L^{\hat{g}}$-module of twisted type belongs to
$\{\varepsilon+i/p\mid 0\le i\le p-1\}$.
\end{lemma}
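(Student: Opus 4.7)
The plan is to realize any irreducible $V_L^{\hat{g}}$-module $M$ of twisted type as a $\phi$-eigenspace of a suitably chosen lift $\phi$ of $\hat{g}$ acting on an irreducible $\hat{g}^s$-twisted $V_L$-module $W$, and then to track how the $\hat{g}$-grading on $V_L$ combines with the fractional mode indices of the twisted vertex operators to force the conformal weight of $M$ into the prescribed set.

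First I would fix $1\le s\le p-1$ and choose an irreducible $\hat{g}^s$-twisted $V_L$-module $W$ containing $M$. Since $g^s$ is again a fixed-point free isometry of $L$ of prime order $p$ with the same eigenvalue multiplicities on $\h$ as $g$, and since $\hat{g}^s$ is a standard lift of $g^s$ by Lemma \ref{L:standardlift}(3), formula \eqref{Eq:esp} applies to $\hat{g}^s$ and yields $\varepsilon(W)=\varepsilon$. Because $\hat{g}$ normalizes $\la\hat{g}^s\ra$, the conjugate $W\circ\hat{g}$ is again an irreducible $\hat{g}^s$-twisted $V_L$-module isomorphic to $W$, and I can fix a module isomorphism $\phi\colon W\to W\circ\hat{g}$ normalized so that $\phi^p=\mathrm{id}$; this yields a decomposition $W=\bigoplus_{i=0}^{p-1}W^{(i)}$ into the $\phi$-eigenspaces of eigenvalue $\exp(2\pi\sqrt{-1}\,i/p)$. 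A standard double-commutant argument for cyclic orbifolds shows that each $W^{(i)}$ is an irreducible $V_L^{\hat{g}}$-module, so $M\cong W^{(i)}$ for some $i$.

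The key computation determines $\varepsilon(W^{(i)})$ via the fractional mode indices of $Y_W$. For $v\in V_L(k)$, the $\hat{g}$-eigenspace with eigenvalue $\exp(2\pi\sqrt{-1}\,k/p)$, the twisted vertex operator $Y_W(v,z)=\sum_n v_{(n)}z^{-n-1}$ has mode indices $n\in -ks/p+\Z$, and the intertwining relation $\phi Y_W(v,z)\phi^{-1}=Y_W(\hat{g}v,z)$ implies that $v_{(n)}$ sends $W^{(i)}$ into $W^{(i+k)}$ and shifts $L(0)$-eigenvalues by an element of $ks/p+\Z$ (using $\mathrm{wt}(v)\in\Z$). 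Starting from the bottom stratum $W_\varepsilon\subset W^{(i_0)}$ and acting by modes of minimum allowed index produces a nonzero vector of weight exactly $\varepsilon+(ks\bmod p)/p$ in $W^{(i_0+k)}$, so $\varepsilon(W^{(i_0+k)})=\varepsilon+(ks\bmod p)/p$. Since $\gcd(s,p)=1$, the map $k\mapsto ks\bmod p$ is a bijection on $\{0,\dots,p-1\}$, and hence $\{\varepsilon(W^{(i)}):0\le i\le p-1\}=\{\varepsilon+j/p:0\le j\le p-1\}$, as required.

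The main obstacle is justifying the attainment of the lower bound $\varepsilon+(ks\bmod p)/p$ for $\varepsilon(W^{(i_0+k)})$: it is immediate that weights in $W^{(i_0+k)}$ lie in $\varepsilon+(ks/p)+\Z_{\ge 0}$, but one must actually exhibit a nonzero vector realizing this minimum. This can be done by appealing to the explicit model $W=M(1)[g^s]\otimes T$ and choosing a vector in $V_L(k)$ of minimum relevant weight whose mode of index $-ks/p$ acts nontrivially on the lowest stratum of $T$, as described in \cite{ALY18}.
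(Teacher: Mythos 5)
Your proposal is correct and is essentially the paper's own (very brief) argument: the paper justifies this lemma purely ``by the explicit construction'' of $V_L^{T}[\hat{g}^s]=M(1)[g^s]\otimes T$, whose $L(0)$-spectrum lies in $\varepsilon+\frac{1}{p}\Z_{\ge 0}$ with every congruence class modulo $\Z$ realized at its minimal level (the fixed-point free isometry $g^s$ of prime order has all primitive $p$-th roots of unity as eigenvalues, so $M(1)[g^s]$ has nonzero vectors in every degree $k/p$), and your eigenspace/mode-index computation is precisely the careful unpacking of this. One small repair to your setup: $W\circ\hat{g}\cong W$ does not follow merely from $\hat{g}$ normalizing $\langle\hat{g}^s\rangle$ (that only shows the conjugate is again $\hat{g}^s$-twisted); instead use that $\gcd(s,p)=1$ gives $\hat{g}\in\langle\hat{g}^s\rangle$ and that a twisted module is isomorphic to its conjugate by any power of its own twisting automorphism, an isomorphism being furnished by an operator of the form $e^{2\pi\sqrt{-1}\,tL(0)}$ --- which, by Schur's lemma, also shows your $\phi$ is scalar on each weight space, so the bottom stratum $W_\varepsilon$ indeed lies in a single eigenspace $W^{(i_0)}$ as you assumed.
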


By \cite[Proposition 6.2]{Le}, we have  
\begin{equation}\label{dimT}
(\dim T)^2= [L:{R}_L^{{g}^s}],
\end{equation}
where ${R}_L^{{g}^s}=((1-g^s)L^* )\cap L$.   It was shown in \cite{DLM2} that all irreducible $\hat{g}^i$-twisted modules are $\hat{g}$-stable  and  we denote 
\[
V_{L}^T[\hat{g}^s] (j) =  \{ x\in V_{L}^T[\hat{g}^s]\mid  \hat{g} x= e^{2\pi \sqrt{-1}j / p} x\}, \quad  0 \leq j\leq n-1. 
\]

\subsection{Irreducible $V_L^{\hat{g}}$-modules under $(1-g)L^*\subset L$}\label{S:1-g}

In this subsection, we assume that 
\begin{equation}
(1-g)L^*\subset L,\quad \text{equivalently},\quad g(\lambda+L)=\lambda+L\quad \text{ for all } \lambda+L\in\mathcal{D}(L).\label{Eq:as1}
\end{equation}
Note that the above condition holds if $V_L(1)$ is conjugate to some irreducible module of twisted type; the results in this subsection will be used in Section \ref{S:2} for the classification  of certain lattices. 

Under our assumption, $V_L$ has exactly  $|\mathcal{D}(L)|$ irreducible $\hat{g}^s$-twisted modules for any $0 \leq s \leq p-1$ \cite{DLM2}. Moreover, for any irreducible $\hat{g}^s$-twisted modules  $V_L^T[\hat{g}^s]$,   
the module $T$ in Section \ref{S:tw}  
is determined by a linear character of $(1-g^s)L^*/(1-g^s)L$ and  can be  labeled by $(1-g^s)L^*/(1-g^s)L\cong\mathcal{D}(L)$ \cite[Proposition 6.2]{Le}; 
we adopt the following notation as in \cite{Lam19}:
\begin{equation} \label{VlL}
V_{\lambda+L}[\hat{g}^s]=M(1)[g^s]\otimes T_{\lambda+L},\quad \lambda+L\in \mathcal{D}(L).
\end{equation}
It follows from Lemma \ref{Lem:1-g+} (2) that the set of linear characters of $(1-g^s)L^*/(1-g^s)L$ is $$\{\exp(-2\pi\sqrt{-1}(\cdot\mid(1-g^s)^{-1}\lambda))\mid \lambda+L\in \mathcal{D}(L)\}$$
and we choose the labeling so that $T_{\lambda+L}$ is associated with $\exp(-2\pi\sqrt{-1}(\cdot\mid(1-g^s)^{-1}\lambda))$.
Hence we obtain the following:

\begin{lemma}\label{Lem:conjun0} Let $\alpha\in L^*$ and $1\le s\le p-1$.
Then, as $\hat{g}^s$-twisted $V_L$-modules, $V_{L}[\hat{g}^s]\circ \sigma_{(1-g^s)^{-1}\alpha}\cong V_{\alpha+L}[\hat{g}^s].$
\end{lemma}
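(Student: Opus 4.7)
The plan is to mirror the strategy of Lemma \ref{Lem:conjhom} in the twisted setting: first verify that $\sigma_y$ with $y=(1-g^s)^{-1}\alpha$ commutes with $\hat{g}^s$ in $\aut(V_L)$, so that the $\sigma_y$-conjugate of the $\hat{g}^s$-twisted module $V_L[\hat{g}^s]$ is still a $\hat{g}^s$-twisted $V_L$-module, and then identify this conjugate by tracking how the character labelling the factor $T$ is shifted.

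For the commutation step, from $\hat{g}^s\sigma_y\hat{g}^{-s}=\sigma_{g^sy}$ one needs $\sigma_{g^sy-y}=\sigma_{-\alpha}$ to be trivial on $V_L$; this holds because for $\alpha\in L^*$ the inner automorphism $\sigma_{-\alpha}$ sends $e^\beta\mapsto\exp(2\pi\sqrt{-1}(\alpha|\beta))e^\beta=e^\beta$ for every $\beta\in L$ and acts as the identity on $M(1)$. Hence $\sigma_y$ and $\hat{g}^s$ commute in $\aut(V_L)$, and $V_L[\hat{g}^s]\circ\sigma_y$ is again an irreducible $\hat{g}^s$-twisted $V_L$-module.

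Next I would identify which member of the family $\{V_{\mu+L}[\hat{g}^s]\mid\mu+L\in\mathcal{D}(L)\}$ this is. By Definition \ref{Mconj}, $Y_{V_L[\hat{g}^s]\circ\sigma_y}(a,z)=Y_{V_L[\hat{g}^s]}(\sigma_y a,z)$. Since $\sigma_y$ fixes $\h$ pointwise, the Heisenberg factor $M(1)[g^s]$ is unaffected, and for $\beta\in L$ the formula multiplies the twisted vertex operator $Y(e^\beta,z)$ by the scalar $\exp(-2\pi\sqrt{-1}(y|\beta))$, acting only on the $T$-component. In the explicit construction of $V_L^{T}[\hat{g}^s]$ reviewed in Section \ref{S:tw}, the action of the twisted group algebra on $T$ is exactly what records the character of $(1-g^s)L^*/(1-g^s)L$ used to label $T_{\mu+L}$ just before the lemma. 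Therefore the new $T$-character sends
\[
(1-g^s)\delta\longmapsto\exp\bigl(-2\pi\sqrt{-1}\,((1-g^s)\delta\,|\,(1-g^s)^{-1}\alpha)\bigr),
\]
which is precisely the character labelling $T_{\alpha+L}$; hence $V_L[\hat{g}^s]\circ\sigma_y\cong V_{\alpha+L}[\hat{g}^s]$.

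The step I expect to be the main obstacle is the bookkeeping in this character translation: the $\hat{g}^s$-twisted vertex operators carry cocycle factors as well as $(1-g^{\pm s})$-factors in their exponents, and one must check carefully that the scalar $\exp(-2\pi\sqrt{-1}(\beta|y))$ introduced by $\sigma_y$ corresponds \emph{exactly} to the shift $L\mapsto\alpha+L$ under the precise labelling fixed in the paragraph preceding the lemma, rather than to some variant such as $-\alpha+L$ or $g^s\alpha+L$. Once this has been verified, the isomorphism is forced because the irreducible $\hat{g}^s$-twisted $V_L$-modules are in bijection with the distinct characters of $(1-g^s)L^*/(1-g^s)L$.
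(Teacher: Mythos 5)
Your proposal is correct and follows essentially the paper's own route: the paper states the lemma as an immediate consequence of its chosen labelling, in which $T_{\lambda+L}$ carries the character $\exp(-2\pi\sqrt{-1}(\cdot\mid(1-g^s)^{-1}\lambda))$ of $(1-g^s)L^*/(1-g^s)L$, and your computation that $\sigma_{(1-g^s)^{-1}\alpha}$ multiplies the action of $e^\beta$ by $\exp(-2\pi\sqrt{-1}(\beta\mid(1-g^s)^{-1}\alpha))$ while fixing $M(1)[g^s]$ is exactly that shift, with the correct sign. Your preliminary check that $\sigma_y$ commutes with $\hat{g}^s$ (via $g^sy-y=-\alpha\in L^*$, so $\sigma_{-\alpha}=\mathrm{id}$ on $V_L$), which the paper leaves implicit in \eqref{Eq:hom}, is a sound and welcome addition.
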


It also follows from \cite[Theorem 4.11]{Lam19} that all irreducible $V_L^{\hat{g}}$-module are simple current modules. Thus, 
\begin{equation}
\irr(V_L^{\hat{g}})=\{V_{\lambda+L}[\hat{g}^s](i)\mid \lambda+L\in\mathcal{D}(L),\quad 0\le i,s\le p-1\},\label{Eq:irr}
\end{equation}
is the set of all inequivalent irreducible modules for $V_L^{\hat{g}}$, 
where we identify $V_{\lambda+L}(i)$ with $V_{\lambda+L}[\hat{g}^0](i)$. 

We further assume that there exist $\tau\in\Aut(V_L^{\hat{g}})$, $\mu+L\in\mathcal{D}(L)$, $0\le q\le p-1$ and $1\le s\le p-1$ such that 
\begin{equation}
V_L(1)\circ\tau\cong V_{\mu+L}[\hat{g}^s](q)\label{Eq:assumptw}.
\end{equation}
In other words, we assume that $V_L(1)$ is conjugate to an irreducible module of twisted type by the action of $\Aut(V_L^{\hat{g}})$. 

By Lemma \ref{Lem:conj} (1) and \eqref{Eq:cwun}, $\varepsilon(V_{\mu+L}[\hat{g}^s](q))=\varepsilon(V_L(1))=1$.
By Lemma \ref{L:cwtw}, we have 
\begin{equation*}
\varepsilon=\varepsilon(V_{\mu+L}[\hat{g}^s]) \in\frac{1}{p}\Z.
\end{equation*}
We choose the labeling of irreducible $V_L^{\hat{g}}$-submodules of $V_{\lambda+L}[\hat{g}^s]$ as in \cite{Lam19} so that
$$\varepsilon(V_{\lambda+L}[\hat{g}^s](j))\equiv\frac{sj}{p}+\frac{(\lambda|\lambda)}{2}\pmod\Z.$$
The following lemma is immediate from Lemma \ref{Lem:conjun0}:

\begin{lemma}\label{Lem:conjun} Let $\alpha\in L^*$ and $1\le s\le p-1$.
For $0\le j\le p-1$, $$V_L[\hat{g}^s](j)\circ\sigma_{(1-g^s)^{-1}\alpha}\cong V_{\alpha+L}[\hat{g}^s](j'),$$where $j'$ is determined by ${sj}\equiv p(\alpha|\alpha)/2+sj'\pmod{p}$.
In particular, all irreducible $V_L^{\hat{g}}$-modules of $\hat{g}^s$-type with the same conformal weight are conjugate under $\Aut(V_L^{\hat{g}})$.
\end{lemma}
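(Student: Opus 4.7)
The proof is essentially a transport of structure argument via Lemma~\ref{Lem:conjun0}, followed by a conformal-weight bookkeeping. First I would confirm that $\sigma_{(1-g^s)^{-1}\alpha}$ actually belongs to $\Aut(V_L^{\hat g})$. By \eqref{Eq:hom} this amounts to showing $(1-g^s)^{-1}\alpha\in (1-g)^{-1}L^*+L^*$, equivalently $(1+g+\cdots+g^{s-1})^{-1}\alpha\in L^*$. Since $\gcd(s,p)=1$, the operator $1+g+\cdots+g^{s-1}$ is invertible on $\R\otimes L$ with determinant $1$ (the eigenvalues $(1-\zeta^{ks})/(1-\zeta^k)$ for $k=1,\dots,p-1$ multiply to $1$ because $k\mapsto ks$ permutes $\{1,\dots,p-1\}$), so it restricts to a bijection of $L^*$, and its inverse does too.

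Given this, I apply Lemma~\ref{Lem:conjun0} to obtain $V_L[\hat g^s]\circ\sigma_{(1-g^s)^{-1}\alpha}\cong V_{\alpha+L}[\hat g^s]$ as $\hat g^s$-twisted $V_L$-modules, hence as $V_L^{\hat g}$-modules. The two sides each decompose into $p$ irreducible $V_L^{\hat g}$-summands $V_L[\hat g^s](j)$ and $V_{\alpha+L}[\hat g^s](j')$, $0\le j,j'\le p-1$. Matching irreducible constituents under the isomorphism yields $V_L[\hat g^s](j)\circ\sigma_{(1-g^s)^{-1}\alpha}\cong V_{\alpha+L}[\hat g^s](j')$ for a uniquely determined $j'$.

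To pin down $j'$, I invoke Lemma~\ref{Lem:conj}(1): conformal weights are preserved under conjugation. The labeling convention from \cite{Lam19} recalled just before the lemma states $\varepsilon(V_{\lambda+L}[\hat g^s](j))\equiv sj/p+(\lambda|\lambda)/2\pmod{\Z}$, so
\[
\frac{sj}{p}\equiv\frac{sj'}{p}+\frac{(\alpha|\alpha)}{2}\pmod\Z.
\]
Multiplying by $p$ yields $sj\equiv sj'+p(\alpha|\alpha)/2\pmod p$, which is exactly the asserted formula, and $j'$ is uniquely determined mod $p$ because $\gcd(s,p)=1$.

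For the ``in particular'' statement, let $M=V_{\lambda+L}[\hat g^s](j)$ and $M'=V_{\lambda'+L}[\hat g^s](j')$ be two irreducible $V_L^{\hat g}$-modules of $\hat g^s$-type with $\varepsilon(M)=\varepsilon(M')$. An identical argument (with $V_{\lambda+L}[\hat g^s]$ in place of $V_L[\hat g^s]$, or by composing two conjugations factoring through $V_L[\hat g^s]$) shows that $M\circ\sigma_{(1-g^s)^{-1}(\lambda'-\lambda)}\cong V_{\lambda'+L}[\hat g^s](j'')$ for some $j''$; the common conformal weight together with the distinctness of conformal weights of $V_{\lambda'+L}[\hat g^s](0),\dots,V_{\lambda'+L}[\hat g^s](p-1)$ mod $\Z$ forces $j''=j'$. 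The main (and essentially only) obstacle is the verification of the opening paragraph that $\sigma_{(1-g^s)^{-1}\alpha}$ lies in the normalizer of $\langle\hat g\rangle$; the rest is a label-matching via conformal weights that the author rightly flags as ``immediate''.
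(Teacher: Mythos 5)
Your proof is correct and takes essentially the same route the paper intends: the paper declares the lemma ``immediate from Lemma~\ref{Lem:conjun0}'' together with the labeling convention $\varepsilon(V_{\lambda+L}[\hat{g}^s](j))\equiv sj/p+(\lambda|\lambda)/2\pmod{\Z}$, which is exactly your transport-of-structure plus conformal-weight matching argument (including the use of $\gcd(s,p)=1$ to make the labels distinct mod $\Z$). Your opening verification that $\sigma_{(1-g^s)^{-1}\alpha}$ lies in $N_{\Aut(V_L)}(\langle\hat{g}\rangle)$ --- reducing via \eqref{Eq:hom} to the invertibility of $1+g+\cdots+g^{s-1}$ on $L^*$, with determinant $1$ --- is a point the paper leaves implicit, and you carry it out correctly.
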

In addition, by \cite[Theorem 5.3]{Lam19}, $\irr(V_L^{\hat{g}})\cong \mathcal{D}(L)\times\Z_p^2$ as abelian groups under the fusion  product; the explicit multiplications are given as follows:
\begin{equation}
V_{\lambda_1+L}[\hat{g}^{s_1}](i_1)\boxtimes V_{\lambda_2+L}[\hat{g}^{s_2}](i_2)=V_{\lambda_1+\lambda_2+L}[\hat{g}^{s_1+s_2}](i_1+i_2).\label{Eq:fusionprod}
\end{equation}
By Lemma \ref{Lem:conj} (2), the assumption \eqref{Eq:assumptw} and the fusion product \eqref{Eq:fusionprod}, we obtain 
\begin{equation}
\{V_L(j)\circ\tau\mid 1\le j\le p-1\}=\{V_{j\mu+L}[\hat{g}^{js}](jq)\mid 1\le j\le p-1\}.\label{Eq:twistj}
\end{equation}

In the remaining of this subsection, we always assume that \eqref{Eq:as1} and \eqref{Eq:assumptw} hold.

\begin{proposition}\label{P:twist} For any $\lambda+L\in \mathcal{D}(L)$, $V_{\lambda+L}(0)\circ\tau$ or $V_{\lambda+L}(1)\circ\tau$ is of twisted type.
\end{proposition}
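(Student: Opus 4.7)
The plan is to reduce the statement to a short fusion-product computation, using the group structure of $\irr(V_L^{\hat{g}})$ recalled in \eqref{Eq:fusionprod} together with the multiplicativity of $\tau$-conjugation under fusion (Lemma \ref{Lem:conj}(2)).

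First, for any $\lambda+L\in\mathcal{D}(L)$ I would write, via \eqref{Eq:fusionprod} with $s_1=s_2=0$,
\[
V_{\lambda+L}(0)\boxtimes V_L(1)\;\cong\;V_{\lambda+L}(1).
\]
Conjugating by $\tau$ and applying Lemma \ref{Lem:conj}(2), together with the hypothesis \eqref{Eq:assumptw}, gives
\[
V_{\lambda+L}(1)\circ\tau\;\cong\;\bigl(V_{\lambda+L}(0)\circ\tau\bigr)\boxtimes\bigl(V_L(1)\circ\tau\bigr)\;\cong\;\bigl(V_{\lambda+L}(0)\circ\tau\bigr)\boxtimes V_{\mu+L}[\hat{g}^s](q).
\]

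Next, using the full classification \eqref{Eq:irr} of irreducible $V_L^{\hat g}$-modules, I would express the image of the untwisted module as
\[
V_{\lambda+L}(0)\circ\tau\;\cong\;V_{\nu+L}[\hat{g}^t](j)
\]
for some $\nu+L\in\mathcal{D}(L)$ and some $0\le t,j\le p-1$. Plugging this into the previous display and applying the fusion rule \eqref{Eq:fusionprod} a second time produces
\[
V_{\lambda+L}(1)\circ\tau\;\cong\;V_{\nu+\mu+L}[\hat{g}^{\,t+s}](j+q),
\]
so the twist indices of the two $\tau$-conjugates in question are $t$ and $t+s\pmod p$.

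To finish, I would invoke Definition \ref{D:untw}: an irreducible module $V_{\cdot+L}[\hat{g}^r](\cdot)$ is of twisted type exactly when $r\not\equiv 0\pmod p$. Since the hypothesis \eqref{Eq:assumptw} requires $1\le s\le p-1$, the residues $t$ and $t+s$ cannot simultaneously vanish modulo $p$; hence at least one of $V_{\lambda+L}(0)\circ\tau$ and $V_{\lambda+L}(1)\circ\tau$ is of twisted type, which is the claim. There is really no serious obstacle here: once one has converted $\tau$-conjugation into translation on the abelian group $\mathcal{D}(L)\times\Z_p^2$ governing $\irr(V_L^{\hat g})$, the assertion reduces to the trivial fact that translation by an element of order $p$ cannot fix the subgroup $\{t=0\}$ pointwise on the pair $\{0,1\}$.
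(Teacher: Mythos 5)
Your proposal is correct and follows essentially the same route as the paper: both start from the fusion rule $V_{\lambda+L}(0)\boxtimes V_L(1)\cong V_{\lambda+L}(1)$, apply Lemma \ref{Lem:conj}(2) to commute $\tau$-conjugation with $\boxtimes$, and conclude via the additivity of the twist index in \eqref{Eq:fusionprod} together with $s\not\equiv 0\pmod p$ from \eqref{Eq:assumptw}. The only cosmetic difference is that you carry out the twist-index bookkeeping explicitly using the parametrization \eqref{Eq:irr}, whereas the paper phrases the same computation as a contradiction (if both conjugates were untwisted, $V_L(1)\circ\tau$ would be untwisted).
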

\begin{proof} By \eqref{Eq:fusionprod}, we have $V_L(1)\boxtimes V_{\lambda+L}(0)=V_{\lambda+L}(1)$.
Then we obtain $(V_L(1)\circ \tau)\boxtimes(V_{\lambda+L}(0)\circ \tau)=(V_{\lambda+L}(1)\circ \tau)$ by Lemma \ref{Lem:conj} (2).
If both $V_{\lambda+L}(0)\circ \tau$ and $V_{\lambda+L}(1)\circ \tau$ are of untwisted type, then by \eqref{Eq:fusionprod}, so is $V_L(1)\circ \tau$, which contradicts the assumption \eqref{Eq:assumptw}.
\end{proof}

\begin{corollary}\label{C:confwt} 
Let $1\le s\le p-1$.
Let $\varepsilon$ be the rational number given in \eqref{Eq:esp}.
Then $\{\varepsilon(M)\mid M\in\irr(V_L^{\hat{g}})\}=\{0,\varepsilon+i/p\mid 0\le i\le p-1\}$.
In particular, $\{\varepsilon(M)\mid M\in\irr(V_L^{\hat{g}})\}\cap\Z=\{0,1\}$.
\end{corollary}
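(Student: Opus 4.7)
The plan is to prove both inclusions of the displayed set equality and then deduce the ``in particular'' statement. The data available is tight: Lemma \ref{L:cwtw} confines all twisted-type weights to $\{\varepsilon+i/p\mid 0\le i\le p-1\}$, the untwisted-type weights are given explicitly by \eqref{Eq:cwun}, and Proposition \ref{P:twist} links the two worlds through $\tau$.

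For the inclusion $\subset$, I would run through the list \eqref{Eq:irr} case by case. Twisted-type modules contribute nothing new by Lemma \ref{L:cwtw}. For untwisted-type modules $V_{\lambda+L}(i)$, formula \eqref{Eq:cwun} gives the weights directly: $V_L(0)$ has weight $0$; each $V_L(i)$ with $i\ge 1$ has weight $1$, which lies in $\{\varepsilon+i/p\}$ because assumption \eqref{Eq:assumptw} identifies $V_L(1)\circ\tau$ with a twisted-type module of the same weight $1$ (using Lemma \ref{Lem:conj} (1)); and for $\lambda+L\ne L$, the modules $V_{\lambda+L}(0)$ and $V_{\lambda+L}(1)$ share a common conformal weight, namely half the minimal norm of $\lambda+L$, and Proposition \ref{P:twist} guarantees that one of their $\tau$-conjugates is of twisted type, placing this common weight in $\{\varepsilon+i/p\}$ again via Lemma \ref{L:cwtw}.

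For the inclusion $\supset$, the value $0$ is realised by $V_L^{\hat g}=V_L(0)$. For the remaining values, fix any $s$ with $1\le s\le p-1$ and consider the $p$ irreducible $V_L^{\hat g}$-submodules $V_L[\hat g^s](j)$ for $0\le j\le p-1$. Each has conformal weight in the $p$-element target set by Lemma \ref{L:cwtw}, and the labeling chosen before Lemma \ref{Lem:conjun} gives $\varepsilon(V_L[\hat g^s](j))\equiv sj/p\pmod{\Z}$; since $\gcd(s,p)=1$, these $p$ residues are pairwise distinct modulo $\Z$, so the $p$ weights are pairwise distinct and must exhaust $\{\varepsilon+i/p\mid 0\le i\le p-1\}$. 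Finally, writing $1=\varepsilon+i_0/p$ from the $\subset$ step, the relation $\varepsilon+i/p=1+(i-i_0)/p\in\Z$ forces $i\equiv i_0\pmod p$, hence $i=i_0$ in the admissible range; so the only integer weight other than $0$ is $1$, yielding the intersection $\{0,1\}$.

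The argument is essentially an accounting exercise once Proposition \ref{P:twist} is in hand; the only mildly delicate point is the last counting step in the $\supset$ direction, where one must use $\gcd(s,p)=1$ to ensure the twisted-type weights really span the full arithmetic progression $\{\varepsilon+i/p\}$ rather than collapsing to a proper subset.
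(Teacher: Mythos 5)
Your proof is correct and follows essentially the same route as the paper: the inclusion $\subset$ via Lemma \ref{L:cwtw} for twisted type and via Proposition \ref{P:twist}, \eqref{Eq:cwun} and \eqref{Eq:assumptw} for untwisted type, and the inclusion $\supset$ from the twisted-type modules. The only difference is that the paper simply asserts $\{\varepsilon(M)\mid M\in\irr(V_L^{\hat{g}})^{tw}\}=J$ by citing Lemma \ref{L:cwtw} and \eqref{Eq:irr}, whereas you spell out the exhaustion step explicitly using the labeling $\varepsilon(V_L[\hat{g}^s](j))\equiv sj/p\pmod{\Z}$ and $\gcd(s,p)=1$ --- a harmless (indeed helpful) elaboration of the same argument.
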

\begin{proof} Set $J=\{\varepsilon+i/p\mid 0\le i\le p-1\}$.
Let $\irr(V_L^{\hat{g}})^{tw}$ be the subset of $\irr(V_L^{\hat{g}})$ consisting of isomorphism classes of irreducible $V_L^{\hat{g}}$-modules of twisted type.
By Lemma \ref{L:cwtw} and \eqref{Eq:irr}, $\{\varepsilon(M)\mid M\in\irr(V_L^{\hat{g}})^{tw}\}=J$.
It follows from Proposition \ref{P:twist}, \eqref{Eq:cwun} and \eqref{Eq:assumptw} that $\varepsilon(V_L(1))=1\in J$ and $\varepsilon(V_{\lambda+L}(j))\in J$ for any $\lambda+L\in\mathcal{D}(L)\setminus\{L\}$ and $0\le j\le p-1$.
Hence by \eqref{Eq:cwun}, 
$\{\varepsilon(M)\mid M\in\irr(V_L^{\hat{g}})\}=J\cup\{0\}$.
\end{proof}

The following lemma is immediate from \eqref{Eq:cwun} and Corollary \ref{C:confwt}.

\begin{lemma}\label{L:norm2}
Let $\lambda+L\in \mathcal{D}(L)\setminus\{L\}$ such that $(\lambda| \lambda)\in 2\Z$. Then the minimum norm of $\lambda+L$ is $2$.
\end{lemma}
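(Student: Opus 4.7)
The plan is to combine the explicit formula for conformal weights of untwisted-type modules given in \eqref{Eq:cwun} with the strong integrality constraint in Corollary \ref{C:confwt}, which says that the only integer conformal weights among irreducible $V_L^{\hat{g}}$-modules are $0$ and $1$.

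First, I would verify that the hypothesis $(\lambda|\lambda)\in 2\Z$ forces every vector of $\lambda+L$ to have even integer norm. Indeed, for any $\alpha\in L$,
\[
(\lambda+\alpha|\lambda+\alpha)=(\lambda|\lambda)+2(\lambda|\alpha)+(\alpha|\alpha),
\]
and $(\lambda|\alpha)\in\Z$ because $\lambda\in L^*$, while $(\alpha|\alpha)\in 2\Z$ because $L$ is even. Hence the minimum norm of $\lambda+L$ is an even nonnegative integer, and it is strictly positive because $\lambda\notin L$ (by $\lambda+L\neq L$), so $0\notin\lambda+L$.

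Next, under the assumption \eqref{Eq:as1} the coset $\lambda+L$ is $g$-stable, so the irreducible $V_L$-module $V_{\lambda+L}$ is $\hat{g}$-invariant and $V_{\lambda+L}(0)$ is an irreducible $V_L^{\hat{g}}$-module in the sense of Section \ref{S:un}. Applying the third formula of \eqref{Eq:cwun} with $\mu=\lambda$ and $j=0$ gives
\[
\varepsilon\bigl(V_{\lambda+L}(0)\bigr)=\tfrac{1}{2}\min\{(v|v)\mid v\in\lambda+L\},
\]
which by the first step is a positive integer.

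Finally, by Corollary \ref{C:confwt} (which uses the twisted-type hypothesis \eqref{Eq:assumptw}), the only integer values attained by conformal weights of irreducible $V_L^{\hat{g}}$-modules are $0$ and $1$. Since $\varepsilon(V_{\lambda+L}(0))$ is a positive integer, it must equal $1$, and the minimum norm of $\lambda+L$ is therefore $2$. There is no real obstacle here; the argument is essentially a one-line application of the two cited facts, and the only thing to check carefully is that the parity hypothesis $(\lambda|\lambda)\in 2\Z$ promotes the a priori rational conformal weight into an integer, which is where Corollary \ref{C:confwt} can be leveraged.
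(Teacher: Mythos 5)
Your proposal is correct and is exactly the argument the paper intends: the paper declares the lemma ``immediate from \eqref{Eq:cwun} and Corollary \ref{C:confwt}'', and your write-up simply makes that deduction explicit (evenness and positivity of the norms on the coset via $\lambda\in L^*$, $L$ even and $\lambda\notin L$, then $\varepsilon(V_{\lambda+L}(0))=\tfrac12\min\{(v|v)\mid v\in\lambda+L\}$ is a positive integer, hence equal to $1$ by the corollary). No gaps; the filled-in details are precisely the ones the paper leaves to the reader.
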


\begin{proposition} \label{LC}
Let $\lambda+L\in \mathcal{D}(L)\setminus\{L\}$ such that $(\lambda| \lambda)\in 2\Z$.
Then there exist $i\in\{0, 1\}$ and 
$1\le j\le p-1$ such that $V_L(j)$ and $V_{\lambda+L}(i)$ are conjugate by an element of $\Aut(V_L^{\hat{g}})$.
\end{proposition}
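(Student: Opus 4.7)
The plan is to combine Proposition \ref{P:twist}, equation \eqref{Eq:twistj}, and Lemma \ref{Lem:conjun}. First, I would invoke Proposition \ref{P:twist} to select $i\in\{0,1\}$ such that $V_{\lambda+L}(i)\circ\tau$ is of twisted type, so it is of $\hat g^t$-type for some $1\le t\le p-1$. Next, since $\lambda+L\neq L$ and $(\lambda|\lambda)\in 2\Z$, Lemma \ref{L:norm2} gives $\min\{(v|v)\mid v\in\lambda+L\}=2$, so \eqref{Eq:cwun} yields $\varepsilon(V_{\lambda+L}(i))=1$; hence by Lemma \ref{Lem:conj}(1) we have $\varepsilon(V_{\lambda+L}(i)\circ\tau)=1$.

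The second step is to produce an untwisted-looking $V_L(j)$ whose $\tau$-conjugate has the \emph{same} $\hat g^t$-type and the same conformal weight. By \eqref{Eq:twistj}, for each $1\le j\le p-1$ the module $V_L(j)\circ\tau$ is isomorphic to $V_{j\mu+L}[\hat g^{js}](jq)$, i.e.\ it is of $\hat g^{js}$-type. Since $\gcd(s,p)=1$, I would choose the unique $j\in\{1,\dots,p-1\}$ with $js\equiv t\pmod p$. Then $V_L(j)\circ\tau$ is of $\hat g^t$-type, and by \eqref{Eq:cwun} and Lemma \ref{Lem:conj}(1) its conformal weight equals $\varepsilon(V_L(j))=1$.

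Now both $V_L(j)\circ\tau$ and $V_{\lambda+L}(i)\circ\tau$ are irreducible $V_L^{\hat g}$-modules of $\hat g^t$-type with conformal weight $1$. The key ingredient is the second (``in particular'') assertion of Lemma \ref{Lem:conjun}: such modules are conjugate under $\Aut(V_L^{\hat g})$. Thus there is $\sigma\in\Aut(V_L^{\hat g})$ with
\[
(V_L(j)\circ\tau)\circ\sigma\;\cong\;V_{\lambda+L}(i)\circ\tau.
\]

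Finally, using the general identity $(M\circ\alpha_1)\circ\alpha_2\cong M\circ(\alpha_1\alpha_2)$ that follows immediately from Definition \ref{Mconj}, the displayed isomorphism rewrites as $V_L(j)\circ(\tau\sigma)\cong V_{\lambda+L}(i)\circ\tau$, and composing on the right with $\tau^{-1}$ gives
\[
V_L(j)\circ(\tau\sigma\tau^{-1})\;\cong\;V_{\lambda+L}(i),
\]
with $\tau\sigma\tau^{-1}\in\Aut(V_L^{\hat g})$, as required. The main subtlety is really only making sure that the chosen $j$ is nonzero modulo $p$, which is guaranteed by $t\neq 0$ and $s$ being a unit mod $p$; everything else is a formal consequence of invariance of the $\hat g^t$-type and conformal weight under $\tau$-conjugation, together with the transitivity statement in Lemma \ref{Lem:conjun}.
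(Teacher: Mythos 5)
Your proposal is correct and follows essentially the same route as the paper's proof: Lemma \ref{L:norm2} and \eqref{Eq:cwun} to pin the conformal weight at $1$, Proposition \ref{P:twist} to get an $i\in\{0,1\}$ with $V_{\lambda+L}(i)\circ\tau$ of $\hat g^{\ell}$-type, \eqref{Eq:twistj} with $j\equiv \ell s^{-1}\pmod p$ to match the twist type, and the transitivity statement of Lemma \ref{Lem:conjun} to conjugate the two modules. You merely make explicit two points the paper leaves implicit — the choice of $j$ via invertibility of $s$ mod $p$, and the unwinding $(M\circ\alpha_1)\circ\alpha_2\cong M\circ(\alpha_1\alpha_2)$ that transports the conjugating element back through $\tau$ — both of which are correct.
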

\begin{proof} 
By Lemma \ref{L:norm2}, the minimum norm of $\lambda+L$ is $2$.
By Proposition \ref{P:twist}, $V_{\lambda+L}(i)\circ \tau$ is of $\hat{g}^\ell$-type for some $i\in\{0,1\}$ and $1\le \ell\le p-1$, and  its conformal weight is $1$.
By \eqref{Eq:twistj}, $V_L(j)\circ\tau$ is of $\hat{g}^\ell$-type for some $1\le j\le p-1$ and its conformal weight is $1$.
Then, the assertion follows from Lemma \ref{Lem:conjun}.
\end{proof}

\section{Lattices associated with subgroups of $\bigoplus_{i=1}^t\Z_{k_i}$}

In this section, we discuss constructions of lattices associated with subgroups of $\bigoplus_{i=1}^t\Z_{k_i}$, which we call Construction A and Construction B.
We also discuss their properties and give some characterizations.
In Sections \ref{Sec:root} and \ref{Sec:ConstAB}, $k_i$ is not necessary a prime.
However, in Sections \ref{Sec:Zp}, \ref{Sec:ChaAB} and \ref{S:example}, $k_1=\dots=k_t$ is a prime. 

\subsection{Root lattice of type $A_{k-1}$}\label{Sec:root}
Let $k\in\Z_{\ge2}$.
In this subsection, we review some basic properties of the root lattice $A$ of type $A_{k-1}$ (cf. \cite{CS}).
The following is a standard model for $A$:
\[
A=\left \{(a_1, a_2, \dots, a_{k}) \in \Z^{k} \, \left |\,  \sum_{i=1}^{k} a_i =0\right. 
\right \}.
\]
Let $\{v_1=(1,0,\dots,0), \dots, v_{k} =(0, 0, \dots, 1)\}$ be the standard basis of $\Z^{k}$.
Then $A(2)=\{\pm (v_i-v_j)\mid 1\leq i<j \leq {k}\}$ and it forms a root system of type $A_{k-1}$.

Set $\alpha_i= v_{i}-v_{i+1}$ for $1\le i\le k-1$. Then $\Delta=\{\alpha_i\mid 1\le i\le k-1\}$ is a base of the root system of type $A_{k-1}$ and has the following Dynkin diagram:
\[
\begin{array}{l}
        \circ
  \hspace{-5pt}-\hspace{-7pt}-\hspace{-5pt}-\hspace{-5pt}-\hspace{-5pt}
        \circ
  \hspace{-5pt}-\hspace{-5.5pt}-\hspace{-5pt}- \hspace{5pt}
        \cdots
  \hspace{5pt}-\hspace{-5pt}- \hspace{-6pt}-\hspace{-6pt}-\hspace{-5pt}
       \circ
  \hspace{-5pt}-\hspace{-5pt}-\hspace{-6pt}-\hspace{-6pt}- \hspace{-5pt}
      \circ
  \vspace{-10pt} \\
  \alpha_1\hspace{18pt} \alpha_2 \hspace{61pt} \alpha_{k-2}
  \hspace{16pt} \alpha_{k-1}\\
\end{array}
\]

Recall that $\mathcal{D}(A) \cong \Z_{k}$. Set $\lambda_{0}=0$ and
\[
\lambda_j = \frac{1}{k} \left
((k-j)\sum_{i=1}^j v_i- j \sum_{i=j+1}^{k} v_i\right)  \text{ for } j = 1, \dots ,k-1.
\]
Then $\lambda_1+A$ is a generator of $\mathcal{D}(A)$ and $\lambda_j\in j\lambda_1+A$.
In addition,
\begin{equation}
(\lambda_i|\lambda_j)=\frac{i(k-j)}{k}\quad {\rm if}\quad i\le j,\label{Eq:gammaj}
\end{equation} 
where $(\cdot|\cdot)$ is the standard inner product of $\R^k$.
Hence for $1\le i,j\le k-1$
\begin{equation}
(\lambda_i|\lambda_j)\equiv -\frac{ij}{k}\pmod{\Z}.\label{Eq:innerij}
\end{equation}

Let $\rho_{\Delta}$ be the Weyl vector with respect to $\Delta$, i.e., 
\begin{equation}
\rho_{\Delta}=\frac{1}2(k-1, k-3, \dots, -k+3, -k+1)=\sum_{j=1}^{k-1}\lambda_j\in\ A^*.\label{Eq:Weylvec}
\end{equation}
Note that for $1\le i\le k-1$, \begin{equation}
(\rho_{\Delta}| \alpha_i)=1\quad \text{and}\quad (\rho_{\Delta}| \lambda_i)=\frac{(k-i)i}{2}.\label{Eq:inner2}
\end{equation}
Hence if $k$ is odd (resp. even), then $\rho_\Delta\in A$ (resp. $\rho_\Delta\in (1/2)A\setminus A$).

\begin{lemma}\label{L:modp} Let $\beta$ be a root of $A$.
Then $(\beta| \rho_\Delta)\not\equiv0\pmod{k}$.
\end{lemma}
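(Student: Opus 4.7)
The plan is to simply evaluate $(\beta \mid \rho_\Delta)$ directly, using the explicit coordinates given just above the lemma. Since every root of $A$ has the form $\beta = \pm(v_i - v_j)$ with $1 \le i < j \le k$, and since the $i$-th coordinate of $\rho_\Delta = \frac{1}{2}(k-1, k-3, \dots, -k+3, -k+1)$ is $\frac{1}{2}(k-2i+1)$, I would compute
\[
(v_i - v_j \mid \rho_\Delta) = \tfrac{1}{2}(k - 2i + 1) - \tfrac{1}{2}(k - 2j + 1) = j - i.
\]
Thus $(\beta \mid \rho_\Delta) = \pm(j-i)$ with $1 \le j - i \le k-1$.

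Since $|j - i| \in \{1, 2, \dots, k-1\}$, none of these integers is divisible by $k$, so $(\beta \mid \rho_\Delta) \not\equiv 0 \pmod{k}$, as required.

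There is essentially no obstacle here: the lemma is a one-line calculation once the standard model for the root system of type $A_{k-1}$ has been fixed. The only thing to be mindful of is using the coordinate description of $\rho_\Delta$ in \eqref{Eq:Weylvec} rather than the decomposition $\rho_\Delta = \sum_{j=1}^{k-1} \lambda_j$, since the inner products $(\lambda_i \mid \lambda_j)$ in \eqref{Eq:gammaj} are only given modulo $\Z$, whereas we need the integer value of $(\beta \mid \rho_\Delta)$ to conclude non-divisibility by $k$.
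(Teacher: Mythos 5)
Your proof is correct and amounts to the same one-line computation as the paper's: the paper expands a root as $\beta=\pm\sum_{s=i}^{j-1}\alpha_s$ and uses $(\rho_\Delta|\alpha_s)=1$ to get $|(\beta|\rho_\Delta)|=j-i\in\{1,\dots,k-1\}$, while you evaluate the same quantity in coordinates, landing on exactly the same value $\pm(j-i)$. The difference is purely notational (explicit coordinates versus simple-root expansion), and your remark about avoiding the mod-$\Z$ formula \eqref{Eq:gammaj} is a sensible precaution.
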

\begin{proof} 
If $\beta$ is positive (resp. negative), then $\beta=\sum_{i=1}^{k-1} c_i\alpha_i$ (resp. $\beta=-\sum_{i=1}^{k-1}c_i\alpha_i$) and $c_i\in\{0,1\}$.
Hence $1\le |(\rho_\Delta| \beta)|=\sum_{i=1}^{k-1}c_i\le k-1$.
\end{proof}

Set $\alpha_0=-\sum_{i=1}^{k-1}\alpha_i$, the negated highest root.
Denote $\tilde{\Delta}=\Delta\cup\{\alpha_0\}.$
Let $g_{\Delta}$ be the following fixed-point free isometry of $A$ of order $k$:  
\begin{equation}
g_{\Delta}(x_1,x_2,\dots,x_{k})=(x_{k},x_1,x_2,\dots,x_{k-1}).\label{Eq:g}
\end{equation}
Then $g_\Delta(\alpha_i)=\alpha_{i+1}$ if $1\le i\le k-2$ and $g_\Delta(\alpha_{k-1})=\alpha_0$.
In particular, $g_\Delta$ acts on $\tilde{\Delta}$ as a cyclic permutation of order $k$.
It is known that $g_\Delta$ is a Coxeter element of the Weyl group, the subgroup of $O(A)$ generated by all reflections associated with roots.
It is easy to see that for $1\le j\le k-1$
\begin{equation}
g_\Delta(\lambda_j)=\lambda_j-\sum_{i=1}^j\alpha_i,\quad g_{\Delta}(\rho_{\Delta})=\rho_{\Delta}-k\lambda_{1}.
\label{Eq:grho}
\end{equation}

The following lemma is immediate:

\begin{lemma}\label{L:fpf} For $1\le j\le k-1$, $g_\Delta^j$ is fixed-point free if and only if $\gcd(j,k)=1$.
\end{lemma}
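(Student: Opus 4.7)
The plan is to analyze the fixed-point set of $g_\Delta^j$ on $A$ via its action on the ambient space $\R^k$ and then intersect with the sum-zero hyperplane defining $A$.

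First, I would identify the cycle structure of $g_\Delta^j$ as a permutation of the coordinates $\{1,\dots,k\}$. Since $g_\Delta$ acts by the $k$-cycle $(1\,2\,\cdots\,k)^{-1}$ (or rather the cyclic shift sending coordinate $i+1$ to coordinate $i$ and $1$ to $k$), its $j$-th power decomposes into $d := \gcd(j,k)$ disjoint cycles, each of length $k/d$. Write these orbits as $O_1,\dots,O_d$.

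Next, I would describe the fixed space in $\R^k$. A vector $(x_1,\dots,x_k) \in \R^k$ is fixed by $g_\Delta^j$ if and only if $x_i$ is constant on each orbit $O_\ell$. Hence $(\R^k)^{g_\Delta^j}$ has $\R$-basis given by the indicator vectors $u_\ell = \sum_{i \in O_\ell} v_i$ for $\ell = 1,\dots,d$, and in particular has dimension $d$. Intersecting with $A = \{x \in \Z^k \mid \sum x_i = 0\}$, one obtains $A^{g_\Delta^j} = \{\sum_\ell c_\ell u_\ell \mid c_\ell \in \Z,\ \sum_\ell c_\ell = 0\}$, since every orbit has the same length $k/d$.

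If $d = 1$, then the sum-zero condition forces $c_1 = 0$, so $A^{g_\Delta^j} = 0$ and $g_\Delta^j$ is fixed-point free. If $d \ge 2$, then $u_1 - u_2$ is a nonzero element of $A^{g_\Delta^j}$, so $g_\Delta^j$ has nonzero fixed points. This gives the desired equivalence. The argument is essentially a bookkeeping exercise on cycle lengths, and I do not anticipate a substantive obstacle — the only mild subtlety is ensuring that all orbits have equal length (which is automatic for the power of a single $k$-cycle) so that the sum-zero constraint translates cleanly into $\sum c_\ell = 0$.
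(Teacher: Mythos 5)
Your proposal is correct and complete. The paper in fact offers no proof at all --- the lemma is introduced with ``The following lemma is immediate'' --- so your cycle-orbit bookkeeping is exactly the routine verification the authors are eliding, and you carry it out accurately: $g_\Delta^j$ permutes the coordinates in $d=\gcd(j,k)$ orbits of equal length $k/d$, the fixed space of $\R^k$ is spanned by the orbit indicators $u_1,\dots,u_d$, and the equal orbit lengths translate the sum-zero condition on $A$ into $\sum_\ell c_\ell=0$, giving $A^{g_\Delta^j}=0$ precisely when $d=1$ (with $u_1-u_2$ a nonzero fixed vector when $d\ge2$). One marginally slicker route, worth knowing: on the sum-zero hyperplane the eigenvalues of $g_\Delta$ are the nontrivial $k$-th roots of unity $\omega^i$, $1\le i\le k-1$, so $g_\Delta^j$ has $1$ as an eigenvalue iff $k\mid ij$ for some such $i$, which happens iff $\gcd(j,k)>1$ (take $i=k/\gcd(j,k)$); this avoids handling the lattice separately, since fixed-point freeness on the lattice is equivalent to fixed-point freeness on $\R\otimes_\Z A$. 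Your only ``subtlety'' --- equal orbit lengths for a power of a single $k$-cycle --- is indeed automatic, as you say, so there is no gap.
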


We now give some lemmas, which will be used later.

\begin{lemma}\label{Lem:Ap1} Let $t$ be a positive integer.
Assume that $k\ge3$.
Let $X$ be a set of roots of the root system of type $A_{k-1}^t$.
Assume that $(\alpha|\beta)\in\{0,-1\}$ for all distinct $\alpha,\beta\in X$.
Then $|X|\le tk$.
In addition, if $|X|=tk$, then $X$ is the union of a base and the negated highest roots.
\end{lemma}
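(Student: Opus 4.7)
The plan is to recast the pairwise inner-product hypothesis as a combinatorial constraint on a directed graph and then count edges. Since $A_{k-1}^t$ is the orthogonal direct sum of $t$ mutually orthogonal copies of $A_{k-1}$, every root lies in exactly one component and cross-component pairs automatically have inner product $0$. Writing $X=\bigsqcup_{i=1}^t X_i$ componentwise, it therefore suffices to prove $|X_i|\le k$ in a single copy of $A_{k-1}$ and to describe the equality case; the full statement follows by summation over $i$.

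For a single copy of $A_{k-1}$, I would use the realization $A_{k-1}=\{x\in\Z^k:\sum x_j=0\}$ from Section \ref{Sec:root} and identify each root $v_i-v_j$ (with $i\ne j$) with the directed edge $i\to j$ on the vertex set $\{1,\dots,k\}$. A direct computation of $(v_i-v_j\mid v_p-v_q)$ in terms of Kronecker deltas shows that, for distinct roots, the inner product equals $+1$ iff they share a source ($i=p$) or a target ($j=q$), equals $-2$ iff they are opposite, equals $-1$ iff the head of one is the tail of the other, and vanishes otherwise. The hypothesis $(\alpha|\beta)\in\{0,-1\}$ for distinct $\alpha,\beta\in X$ therefore amounts to saying: the associated directed graph $G_i$ on $\{1,\dots,k\}$ has in-degree and out-degree at most $1$ at every vertex and contains no $2$-cycle. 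Summing out-degrees immediately yields $|X_i|\le k$.

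For the equality case $|X_i|=k$, every vertex must have in-degree and out-degree exactly $1$; combined with the absence of $2$-cycles and self-loops (roots satisfy $i\ne j$, and $k\ge 3$ makes the no-$2$-cycle constraint nonvacuous), $G_i$ is then a disjoint union of directed cycles of length $\ge 3$. Each such $m$-cycle $i_1\to i_2\to\cdots\to i_m\to i_1$ corresponds to the set of roots $\{v_{i_j}-v_{i_{j+1}}\}$ (indices mod $m$); the first $m-1$ of these form a base of the $A_{m-1}$ sub-root-system supported on $\{v_{i_1},\dots,v_{i_m}\}$ (a path in its Dynkin diagram), and the remaining one is precisely the negated highest root with respect to that base, so the $m$-cycle realizes exactly the affine Dynkin diagram $\tilde{A}_{m-1}$. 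Consequently $X_i$ is the union of a base of some sub-root-system of type $A_{m_1-1}\oplus\cdots\oplus A_{m_s-1}$ (with $\sum_j m_j=k$) together with the negated highest root of each of its connected components, which is the ``base plus negated highest roots'' structure asserted in the lemma.

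The proof itself is largely routine once the graph model is set up; the main place where care is required is the bookkeeping in the equality case, namely recognizing that the disjoint-cycle decomposition of $G_i$ is what ``base and negated highest roots'' is forced to mean, with ``base'' allowed to be that of a proper sub-root-system rather than of all of $A_{k-1}^t$ itself. This is not really a technical obstacle but rather a matter of carefully reading the equality statement against what the degree inequalities can actually force.
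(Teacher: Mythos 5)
Your proof is correct, and it takes a genuinely different route from the paper's. The paper's proof puts an undirected graph $\Gamma$ on the set $X$ itself, with an edge $\{\alpha,\beta\}$ whenever $(\alpha|\beta)=-1$, and asserts that $\Gamma$ must be a subgraph of the union of $t$ copies of the extended Dynkin diagram $\tilde{A}_{k-1}$ (``otherwise $\mathrm{Span}_\Z X$ has a negative norm vector''), whence $|X|\le tk$, with equality forcing $\Gamma$ to be exactly that union. You instead put a directed graph on the $k$ coordinates of each copy of $A_{k-1}$, identifying $v_i-v_j$ with the edge $i\to j$, and observe that the hypothesis $(\alpha|\beta)\in\{0,-1\}$ excludes exactly the values $+1$ (shared head or shared tail) and $-2$ (opposite roots), i.e.\ forces in- and out-degrees at most $1$ and no $2$-cycles; summing degrees gives $|X_i|\le k$ per copy. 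This is more elementary --- no appeal to the classification of positive semidefinite diagrams --- and, more importantly, it analyzes the equality case exactly rather than asserting it.

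That exactness exposes a real discrepancy, and you are on the right side of it. Your equality analysis yields, in each copy, a disjoint union of directed cycles of lengths $m_1,\dots,m_s\ge 3$ with $\sum_j m_j=k$, i.e.\ extended diagrams $\tilde{A}_{m_j-1}$ of a possibly \emph{proper} subsystem, and you correctly flag that this is weaker than ``a base of $A_{k-1}^t$ plus the $t$ negated highest roots.'' The paper intends the stronger statement (its proof claims $\Gamma$ is a union of $t$ copies of $\tilde{A}_{k-1}$, and Proposition \ref{P:ChaB} invokes the lemma in the form ``a base of the root system $R(2)$ of type $A_{p-1}^t$ and the negated highest roots''), but the stronger statement is false once $k\ge 6$: in a single copy of $A_5$ the six roots $v_1-v_2$, $v_2-v_3$, $v_3-v_1$, $v_4-v_5$, $v_5-v_6$, $v_6-v_4$ satisfy the hypothesis with $|X|=tk=6$, yet span only a rank-$4$ sublattice and contain no base of $A_5$; two orthogonal $\tilde{A}_2$ configurations are positive semidefinite, so the paper's negative-norm argument does not exclude them, and this graph is not a subgraph of $\tilde{A}_5$ either. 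For $3\le k\le 5$ the two readings coincide, since $k$ cannot be written as a sum of two or more parts each $\ge 3$. In Proposition \ref{P:ChaB}, where $k=p$ may be $\ge 7$, the strong form can be recovered, but only by using the additional hypothesis that $L$ is rootless: writing the coset of $v_i-v_j$ in $L_A(C)/L\cong\Z_p$ as $\phi(i)-\phi(j)$, rootlessness makes $\phi$ injective on the $p$ coordinates, hence bijective, which forces a single $p$-cycle per copy --- input not available in the lemma itself. So your hedged reading of the equality statement is not mere bookkeeping: your weaker conclusion is the correct statement of the lemma, and the gap in the equality case lies in the paper's one-line proof, not in yours.
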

\begin{proof} Let $\Gamma$ be a graph on $X$ such that $\{\alpha,\beta\}$ is an edge if $(\alpha|\beta)=-1$.
Then $X$ is a subgraph of the union of $t$ copies of the extended Dynkin diagram of type $A_{k-1}$; otherwise, ${\rm Span}_\Z X$ has a negative norm vector.
Hence $|X|\le tk$. 
If $|X|=tk$, then $\Gamma$ is the union of $t$ copies of the extended Dynkin diagram of type $A_{k-1}$, which proves the latter assertion.
\end{proof}

\begin{lemma}\label{L:dih} Assume that $k$ is an odd prime.
Let $h\in O(A)$ be a fixed-point free isometry of order $k$ such that $h$ preserves $\tilde{\Delta}$.
Then $h\in\langle g_\Delta\rangle$.
\end{lemma}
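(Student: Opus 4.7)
The plan is to identify $h$ with an automorphism of the extended Dynkin diagram and then use the fact that this diagram is a $k$-cycle. Since $h$ preserves $\tilde{\Delta}$ and all inner products among roots in $\tilde{\Delta}$, the induced permutation of $\tilde{\Delta}$ is an automorphism of the underlying extended Dynkin diagram of type $A_{k-1}$, which is a cycle of length $k$. Thus we obtain a homomorphism from the stabilizer $\Stab_{O(A)}(\tilde{\Delta})$ into $\Aut(\tilde{\Delta}) \cong D_k$, the dihedral group of order $2k$.

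Next I would check that this homomorphism is injective. Since $\Delta \subset \tilde{\Delta}$ is a $\Z$-basis of $A$, any isometry of $A$ is determined by its action on $\Delta$, hence on $\tilde{\Delta}$. Therefore the image $\bar{h}\in D_k$ of $h$ has the same order as $h$, namely $k$. Also, $\bar{h}$ cannot be trivial: otherwise $h$ would act as the identity on $\Delta$ and hence on $A$, contradicting the assumption that $h$ is fixed-point free (since $k \geq 3$).

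Now I would use the structure of $D_k$ for $k$ an odd prime. The rotations form a cyclic subgroup of order $k$; the reflections all have order $2$. Since $k$ is an odd prime and $\bar{h}$ has order $k$, $\bar{h}$ must be a nontrivial rotation, i.e., $\bar{h} = \overline{g_\Delta^{\,j}}$ for some $1 \leq j \leq k-1$, as $g_\Delta$ cyclically permutes $\tilde{\Delta}$ by \eqref{Eq:grho} and the definition of $g_\Delta$.

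Finally, the element $h\, g_\Delta^{-j}$ maps to the identity in $D_k$, so by the injectivity established above, $h\, g_\Delta^{-j} = \id$, giving $h = g_\Delta^{\,j} \in \langle g_\Delta\rangle$. The only mildly delicate point is the injectivity of the map $\Stab_{O(A)}(\tilde{\Delta}) \to D_k$, but it follows immediately because $\Delta$ is a $\Z$-basis of $A$; everything else is a clean application of the dihedral structure and the primality of $k$.
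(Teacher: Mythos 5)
Your proof is correct and takes essentially the same approach as the paper: the paper's proof identifies the stabilizer of $\tilde{\Delta}$ in $O(A)$ with the dihedral group of order $2k$ generated by $g_\Delta$ and the order-$2$ diagram automorphism, and concludes from the primality of $k$ that $h\in\langle g_\Delta\rangle$. Your explicit verification that the map $\Stab_{O(A)}(\tilde{\Delta})\to\Aut(\tilde{\Delta})\cong D_k$ is injective (because $\Delta$ is a $\Z$-basis of $A$) merely spells out what the paper leaves implicit.
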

\begin{proof} Notice that the stabilizer of $\tilde\Delta$ in $O(A)$ is the stabilizer of the extended Dynkin diagram of type $A_{k-1}$, and it is the dihedral group of order $2k$ generated by $g_\Delta$ and the diagram automorphism of order $2$.
Since $k$ is an odd prime, $h\in\langle g_\Delta\rangle$.
\end{proof}

\subsection{Constructions of lattices associated with subgroups of $\bigoplus_{i=1}^t\Z_{k_i}$}\label{Sec:ConstAB}
Let $t$ be a positive integer and let $k_i\in\Z_{\ge2}$ for $1\le i\le t$.
Let $R_i$ $(1\le i\le t)$ be a copy of the root lattice of type $A_{k_i-1}$.
Let $R$ be the orthogonal sum of $R_1,\dots,R_t$; $$R=R_1\perp R_2\perp\dots\perp R_t.$$
Then $\mathcal{D}(R_i)\cong\Z_{k_i}$ and $\mathcal{D}(R)\cong\bigoplus_{i=1}^t\Z_{k_i}$.
Let $$\nu:R^*\to R^*/R=\mathcal{D}(R)\cong\bigoplus_{i=1}^t\Z_{k_i}$$ be the canonical surjective map.
For a subgroup $C$ of $\bigoplus_{i=1}^t\Z_{k_i}$, let $L_A(C)$ denote the lattice defined by 
\begin{equation}
L_A(C)=\nu^{-1}(C)=\{\alpha\in R^*\mid \nu(\alpha)\in C\};\label{Eq:ConstA}
\end{equation}
we call $L_A(C)$ the lattice constructed by \emph{Construction A} from $C$.
Since $L_A(C)$ contains $R$ as a full sublattice, the rank of $L_A(C)$ is $\sum_{i=1}^t(k_i-1)$.
Note that $L_A(\{\allzero\})=R$, where $\allzero$ is the identity element of $\bigoplus_{i=1}^t\Z_{k_i}$.

We now fix a base $\Delta_i$ of the root system $R_i(2)$ of type $A_{k_i-1}$.
Then $\Delta=\bigcup_{i=1}^t\Delta_i$ is a base of $R(2)$.
For $x=(x_i)\in\bigoplus_{i=1}^t\Z_{k_i}$, denote 
\begin{equation}
\lambda_x=(\lambda_{x_1}^1,\dots,\lambda_{x_t}^t)\in R_1^*\perp\dots\perp R_t^*=R^*,\label{Eq:lambdac}
\end{equation}
where $x_i$ is regarded as an element of $\{0,\dots,k_i-1\}$ and $\{\lambda_{j}^i\mid 1\le j\le k_i-1\}$ is the set of fundamental weights in $R_i^*$ with respect to $\Delta_i$.
The following lemma is immediate from the definition of $L_A(C)$.

\begin{lemma}\label{L:genA} For a generating set $\mathcal{C}$ of $C$, 
the set $\{\lambda_c\mid c\in\mathcal{C}\}$ and $R$ generate $L_A(C)$ as a lattice.
\end{lemma}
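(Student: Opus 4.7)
The plan is to use the fact that $\nu(\lambda_c)=c$ for every $c=(x_i)\in\bigoplus_{i=1}^t\Z_{k_i}$ (once we fix representatives $x_i\in\{0,1,\dots,k_i-1\}$), which follows directly from the definition of $\lambda_c$ in \eqref{Eq:lambdac} together with the fact, recalled in Section \ref{Sec:root}, that $\lambda_j^i+R_i$ is the class corresponding to $j\in\Z_{k_i}$ under the isomorphism $\mathcal{D}(R_i)\cong\Z_{k_i}$. The containment ``$\supset$'' is then immediate: $R\subset L_A(C)$ because $\nu(R)=\{\allzero\}\subset C$, and each $\lambda_c$ with $c\in\mathcal{C}$ lies in $L_A(C)$ because $\nu(\lambda_c)=c\in C$.

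For the containment ``$\subset$'', take an arbitrary $\alpha\in L_A(C)$. By definition $\nu(\alpha)\in C$, and because $\mathcal{C}$ generates $C$ as an abelian group we can write
\[
\nu(\alpha)=\sum_{j}n_j\, c_j
\]
for finitely many $c_j\in\mathcal{C}$ and $n_j\in\Z$. Applying $\nu$ to $\beta:=\alpha-\sum_j n_j\lambda_{c_j}$ and using $\nu(\lambda_{c_j})=c_j$ gives $\nu(\beta)=\allzero$, so $\beta\in\ker\nu=R$. Rearranging shows
\[
\alpha=\beta+\sum_{j}n_j\lambda_{c_j}\in\Z\text{-span}\bigl(R\cup\{\lambda_c\mid c\in\mathcal{C}\}\bigr),
\]
which is exactly what we need.

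There is no real obstacle here; the only point requiring any care is bookkeeping with the choice of coset representatives used to define $\lambda_c$, since $\lambda_c+\lambda_{c'}$ need not equal $\lambda_{c+c'}$ on the nose but only modulo $R$. This is precisely why the conclusion is phrased in terms of generating $L_A(C)$ as a lattice (i.e., together with $R$), and the argument above absorbs the discrepancy into the $R$-part $\beta$.
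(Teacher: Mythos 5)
Your proof is correct, and it is exactly the routine argument the paper has in mind when it states that the lemma is ``immediate from the definition of $L_A(C)$'' (the paper gives no written proof): one uses $\nu(\lambda_c)=c$ under the identification $\mathcal{D}(R)\cong\bigoplus_{i=1}^t\Z_{k_i}$ fixed in Section 4.1, checks both containments, and absorbs the additivity defect $\lambda_c+\lambda_{c'}\not=\lambda_{c+c'}$ into the kernel $R=\ker\nu$. Your closing remark about coset representatives correctly identifies the only point of care, so nothing is missing.
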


By \eqref{Eq:gammaj} and \eqref{Eq:innerij}, for $x=(x_i),y=(y_i)\in\bigoplus_{i=1}^t\Z_{k_i}$, we have
\begin{equation}
(\lambda_x| \lambda_x)=\sum_{i=1}^t\frac{x_i(k_i-x_i)}{k_i}\quad\text{and}\quad (\lambda_x| \lambda_y)\equiv-\sum_{i=1}^t\frac{x_iy_i}{k_i}\pmod\Z.\label{Eq:inner}
\end{equation}
Set 
\begin{equation}
\chi_\Delta=(\frac{\rho_{\Delta_1}}{k_1},\dots,\frac{\rho_{\Delta_t}}{k_t})\in \Q\otimes _\Z R\label{Eq:chi},
\end{equation}
where $\rho_{\Delta_i}$ is the Weyl vector of $R_i$ with respect to $\Delta_i$.
Note that $\chi_\Delta$ depends on $\Delta$.

Let $L_B(C)$ be the sublattice of $L_A(C)$ defined by 
\begin{equation}
L_B(C)=\{\alpha\in L_A(C)\mid (\alpha|\chi_\Delta)\in\Z\};\label{Eq:ConstB}
\end{equation}
we call $L_B(C)$ the lattice constructed by \emph{Construction B} from $C$.

\begin{remark} Let $\Delta'$ be a base of $R(2)$.
Then there exists an element $h$ in the Weyl group of $R$ such that $h(\Delta')=\Delta$.
Hence $h(\rho_{\Delta'})=\rho_\Delta$ and $h$ induces an isometry between lattices constructed by Construction B with respect to $\Delta'$ and $\Delta$.
Hence, up to isometry, the lattice $L_B(C)$ does not depend on a choice of a base $\Delta$.
\end{remark}

\begin{remark} When $k_1=\dots=k_t=2$, $L_A(C)$ and $L_B(C)$ are lattices constructed by Constructions A and B from a binary code $C$, respectively (cf. \cite[Chapter 7]{CS}).
\end{remark}

\begin{lemma}\label{Lem:enorm} Let $x=(x_i)\in \bigoplus_{i=1}^t\Z_{k_i}$.
Then $(\lambda_x|\lambda_x)\in2\Z$ if and only if $(\lambda_x|\chi_\Delta)\in\Z$.
\end{lemma}
\begin{proof} 
By \eqref{Eq:inner2} and \eqref{Eq:inner}, we have $$(\lambda_x|\lambda_x)=\sum_{i=1}^t\frac{x_i(k_i-x_i)}{k_i}={2}{}\sum_{i=1}^t\frac{(k_i-x_i)x_i}{2k_i}=2(\lambda_x|\chi_\Delta),$$
which proves this lemma. 
\end{proof}

Let $n$ be the least common multiple of $k_1,\dots,k_t$.
Note that for a simple root $\beta$ of $R_i(2)$, $\beta+L_B(C)$ has order $k_i$ in $L_A(C)/L_B(C)$.
Hence $L_A(C)/L_B(C)$ contains an  element of order $n$.
Since $\chi_\Delta\in (1/2n)R$ by \eqref{Eq:Weylvec}, we have $|L_A(C):L_B(C)|=n$ or $2n$.
By the definition of $L_B(C)$, we have the following:

\begin{lemma}\label{L:indexn}
$|L_A(C):L_B(C)|=n$ if and only if $\chi_\Delta\in (1/n)L_A(C)^*$.
\end{lemma}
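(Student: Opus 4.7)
The plan is to regard the defining condition for $L_B(C)$ as the kernel of a homomorphism into $\Q/\Z$ and then read off the index from the size of the image.

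First, I would consider the group homomorphism
\[
\varphi \colon L_A(C) \to \Q/\Z,\qquad \alpha \mapsto (\alpha\mid\chi_\Delta) + \Z.
\]
By the definition \eqref{Eq:ConstB}, $\ker\varphi = L_B(C)$, so $|L_A(C):L_B(C)| = |\mathrm{Im}\,\varphi|$. Since $\chi_\Delta \in (1/2n)R$ and $L_A(C)\subset R^*$, every value $(\alpha\mid\chi_\Delta)$ lies in $(1/2n)\Z$, and hence $\mathrm{Im}\,\varphi$ is a finite cyclic subgroup of $(1/2n)\Z/\Z$.

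Next, I would use that $\Q/\Z$ contains a unique subgroup of each finite order; in particular, the only cyclic subgroups of $(1/2n)\Z/\Z$ are the $(1/d)\Z/\Z$ for $d\mid 2n$. Since the paragraph preceding the lemma already establishes that $|L_A(C):L_B(C)|\in\{n,2n\}$, the image $\mathrm{Im}\,\varphi$ is either $(1/n)\Z/\Z$ or $(1/2n)\Z/\Z$, and these two cases are distinguished precisely by whether $\mathrm{Im}\,\varphi\subset(1/n)\Z/\Z$.

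Finally, I would translate this containment back to a statement about $\chi_\Delta$: $\mathrm{Im}\,\varphi\subset(1/n)\Z/\Z$ if and only if $(L_A(C)\mid \chi_\Delta)\subset(1/n)\Z$, if and only if $(L_A(C)\mid n\chi_\Delta)\subset\Z$, if and only if $n\chi_\Delta\in L_A(C)^*$, that is, $\chi_\Delta\in(1/n)L_A(C)^*$. Combining this with the previous step yields the stated equivalence. No step poses a real obstacle here; the one point worth care is ensuring the dichotomy "index $n$ or $2n$" has already been proved (which it has, in the paragraph immediately above), so that ruling out index $2n$ is the same as forcing the image to lie in $(1/n)\Z/\Z$.
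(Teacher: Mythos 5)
Your proof is correct and matches the paper's (largely implicit) argument: the paper also derives the dichotomy $|L_A(C):L_B(C)|\in\{n,2n\}$ in the paragraph before the lemma and then treats the equivalence as immediate from the definition of $L_B(C)$, which is exactly your kernel-and-image translation of $(\alpha\mid\chi_\Delta)\bmod\Z$. You have simply spelled out the routine details (cyclic image in $(1/n)\Z/\Z$ versus $(1/2n)\Z/\Z$, and the dualization $n\chi_\Delta\in L_A(C)^*$) that the paper leaves to the reader.
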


Let $g_{\Delta_i}\in O(R_i)$ be defined as  in \eqref{Eq:g}.
Note that $g_{\Delta_i}$ belongs to the Weyl group of $R_i$.
For $e=(e_i)\in\bigoplus_{i=1}^t\Z_{k_i}$, set 
\begin{equation}
g_{\Delta,e}=((g_{\Delta_1})^{e_1},\dots,(g_{\Delta_t})^{e_t})\in O(L_A(C)).\label{Eq:gre}
\end{equation}

\begin{lemma}\label{L:gNc} $g_{\Delta,e}\in O(L_B(C))$ if and only if $\lambda_e\in L_A(C)^*$.
\end{lemma}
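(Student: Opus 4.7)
The plan is to reduce the assertion to the key identity
\[
g_{\Delta,e}(\chi_\Delta)=\chi_\Delta-\lambda_e,
\]
after which the ``if and only if'' follows by a direct pairing computation. First I will establish this identity. Since both $g_{\Delta,e}$ and $\chi_\Delta$ decompose componentwise over the orthogonal summands $R_i$, it suffices to show $g_{\Delta_i}^{e_i}(\rho_{\Delta_i}/k_i)=\rho_{\Delta_i}/k_i-\lambda_{e_i}^i$ for each $i$. I will prove this by induction on $e_i$; the base case $e_i=0$ is trivial. For the inductive step, \eqref{Eq:grho} supplies $g_{\Delta_i}(\rho_{\Delta_i}/k_i)=\rho_{\Delta_i}/k_i-\lambda_1^i$ and $g_{\Delta_i}(\lambda_{e_i}^i)=\lambda_{e_i}^i-\sum_{j=1}^{e_i}\alpha_j^i$, reducing the step to the root-system identity
\[
\lambda_{e_i+1}^i=\lambda_1^i+\lambda_{e_i}^i-\sum_{j=1}^{e_i}\alpha_j^i,
\]
which one verifies by pairing both sides against each simple root using the Cartan matrix of type $A_{k_i-1}$.

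With the key identity in hand, for every $\alpha\in L_A(C)$ the isometry property yields
\[
(g_{\Delta,e}\alpha\,|\,\chi_\Delta)=(g_{\Delta,e}\alpha\,|\,g_{\Delta,e}\chi_\Delta+\lambda_e)=(\alpha\,|\,\chi_\Delta)+(g_{\Delta,e}\alpha\,|\,\lambda_e).
\]
Thus $g_{\Delta,e}$ preserves the subset $L_B(C)=\{\alpha\in L_A(C)\mid(\alpha|\chi_\Delta)\in\Z\}$ precisely when $(g_{\Delta,e}\alpha\,|\,\lambda_e)\in\Z$ for every $\alpha\in L_A(C)$. Since $g_{\Delta,e}\in O(L_A(C))$ permutes $L_A(C)$ bijectively, this is equivalent to $(\beta\,|\,\lambda_e)\in\Z$ for every $\beta\in L_A(C)$, i.e., to $\lambda_e\in L_A(C)^*$. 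Both directions of the equivalence follow.

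The main obstacle will be the inductive verification of the root-system identity in the first paragraph, which requires care but is elementary via the Cartan matrix of type $A_{k_i-1}$. Once the key identity $g_{\Delta,e}(\chi_\Delta)=\chi_\Delta-\lambda_e$ is in place, the deduction of the ``if and only if'' is a routine pairing manipulation together with the bookkeeping observation that $g_{\Delta,e}$ permutes $L_A(C)$.
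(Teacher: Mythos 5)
Your first paragraph is correct and in fact sharpens the paper's own computation: the paper's proof only records the congruence \eqref{Eq:grhoc}, namely $g_{\Delta,e}(\chi_\Delta)\in \chi_\Delta+\lambda_e+R$, obtained directly from \eqref{Eq:grho}, whereas your induction (whose weight identity $\lambda_{e+1}=\lambda_1+\lambda_e-\sum_{j=1}^{e}\alpha_j$ does check out against the Cartan matrix of type $A_{k-1}$) yields the exact identity $g_{\Delta,e}(\chi_\Delta)=\chi_\Delta-\lambda_e$. Your minus sign is the correct one; the discrepancy with \eqref{Eq:grhoc} is harmless since $L_A(C)^*$ is closed under negation, and working modulo $R$ would also have sufficed because $L_A(C)\subset R^*$. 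The ``if'' direction of your second paragraph is likewise sound: if $\lambda_e\in L_A(C)^*$, your displayed formula shows $g_{\Delta,e}(L_B(C))\subseteq L_B(C)$, hence equality.

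The genuine gap is in the ``only if'' direction, hidden in your words ``precisely when''. If $g_{\Delta,e}\in O(L_B(C))$, your formula gives $(g_{\Delta,e}\alpha\,|\,\lambda_e)\in\Z$ only for $\alpha\in L_B(C)$: for $\alpha\in L_A(C)\setminus L_B(C)$, preservation of $L_B(C)$ tells you that $(\alpha|\chi_\Delta)$ and $(g_{\Delta,e}\alpha|\chi_\Delta)$ are both non-integral, which does \emph{not} force their difference $(g_{\Delta,e}\alpha|\lambda_e)$ to be integral. What you actually obtain is $\lambda_e\in L_B(C)^*$, strictly weaker than $\lambda_e\in L_A(C)^*$. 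This is not cosmetic: in the unrestricted setting of Section 4.2 the biconditional can genuinely fail — take $t=1$, $k_1=2$, $C=\Z_2$, $e=1$; then $g_{\Delta,e}=-1\in O(L_B(C))$ trivially, while $\lambda_e=\lambda_1\notin L_A(C)^*=A_1$ (here $|L_A(C):L_B(C)|=2n$). The missing ingredient closing the gap in all cases the paper actually uses is this: $\lambda_e$ is a tuple of fundamental weights, so $(\lambda_e|R)\subset\Z$ and hence $\lambda_e$ pairs integrally with $L_B(C)+R$; moreover the images of the simple roots generate a subgroup of order $n$ in the cyclic group $L_A(C)/L_B(C)$, so whenever $|L_A(C):L_B(C)|=n$ — equivalently $\chi_\Delta\in(1/n)L_A(C)^*$ by Lemma \ref{L:indexn}, which holds automatically in the odd-prime setting of Section \ref{Sec:Zp} and is exactly Assumption (i) preceding Theorem \ref{thm:extra} — one gets $L_B(C)+R=L_A(C)$ and therefore $\lambda_e\in L_A(C)^*$. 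To be fair, the paper's own one-line deduction (``by the definition of $L_B(C)$'') elides the same step; but a complete proof of the forward implication must invoke the index condition, and your argument as written does not.
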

\begin{proof} 
It follows from \eqref{Eq:grho} that 
\begin{equation}
g_{\Delta,e}(\chi_\Delta)\in \chi_\Delta+(e_1\lambda_1^1,\dots,e_t\lambda_1^t)+R=\chi_\Delta+\lambda_e+R.\label{Eq:grhoc}
\end{equation}
By the definition of $L_B(C)$,  
$g_{\Delta,e}\in O(L_B(C))$ if and only if 
$\lambda_e\in L_A(C)^*$.
\end{proof}
By Lemma \ref{L:fpf}, we obtain the following:
\begin{lemma}\label{L:fpf2} The isometry $g_{\Delta,e}$ is fixed-point free and of order $n$ if and only if $\gcd(e_i,k_i)=1$ for all $1\le i\le t$
\end{lemma}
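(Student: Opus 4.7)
The plan is to reduce the statement to the component-wise version already recorded as Lemma \ref{L:fpf}. Since $g_{\Delta,e}=((g_{\Delta_1})^{e_1},\dots,(g_{\Delta_t})^{e_t})$ acts diagonally on the orthogonal sum $R=R_1\perp\dots\perp R_t$ (and this extends to $\Q\otimes_\Z L_A(C)=\Q\otimes_\Z R$), a vector $(v_1,\dots,v_t)$ is fixed by $g_{\Delta,e}$ if and only if each $v_i$ is fixed by $(g_{\Delta_i})^{e_i}$ on $R_i$. Hence $g_{\Delta,e}$ is fixed-point free on $\Q\otimes_\Z L_A(C)$ if and only if every component $(g_{\Delta_i})^{e_i}$ is fixed-point free on $R_i$.

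Next I would dispatch the component condition. Replacing $e_i$ by its residue modulo $k_i$, Lemma \ref{L:fpf} handles the range $1\le e_i\le k_i-1$ directly: $(g_{\Delta_i})^{e_i}$ is fixed-point free if and only if $\gcd(e_i,k_i)=1$. The only case not literally covered by Lemma \ref{L:fpf} is $e_i\equiv 0\pmod{k_i}$, in which case $(g_{\Delta_i})^{e_i}=\mathrm{id}$ has $R_i\ne 0$ as fixed set, and meanwhile $\gcd(e_i,k_i)=k_i>1$, consistent with the claim. Combining across all $i$, fixed-point freeness of $g_{\Delta,e}$ is equivalent to $\gcd(e_i,k_i)=1$ for every $1\le i\le t$.

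Finally, the order condition is automatic once the gcd condition holds: when $\gcd(e_i,k_i)=1$, the component $(g_{\Delta_i})^{e_i}$ generates the cyclic group $\langle g_{\Delta_i}\rangle$ of order $k_i$, so $g_{\Delta,e}$ has order $\mathrm{lcm}(k_1,\dots,k_t)=n$; conversely, if some $\gcd(e_i,k_i)>1$, then already the first part shows $g_{\Delta,e}$ is not fixed-point free, so neither half of the left side of the biconditional holds. This yields the stated equivalence.

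There is no serious obstacle here; the only mild subtlety is that $L_A(C)$ has rank $\sum_{i=1}^t(k_i-1)$ rather than $\sum_{i=1}^t k_i$, so one should be careful to work in the intrinsic rational span $\Q\otimes_\Z R$ of $L_A(C)$, where the diagonal decomposition used above is valid, rather than in the ambient $\Z^{k_1}\oplus\cdots\oplus\Z^{k_t}$ used to model $R$.
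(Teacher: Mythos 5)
Your proposal is correct and matches the paper's own route: the paper derives Lemma \ref{L:fpf2} directly from Lemma \ref{L:fpf} via the block-diagonal action of $g_{\Delta,e}=((g_{\Delta_1})^{e_1},\dots,(g_{\Delta_t})^{e_t})$ on $R_1\perp\dots\perp R_t$, exactly as you do, merely leaving the componentwise reduction, the $e_i\equiv 0\pmod{k_i}$ case, and the order computation $\mathrm{lcm}(k_1,\dots,k_t)=n$ implicit. Your extra care about working in $\Q\otimes_\Z R$ rather than the ambient $\Z^{k_1}\oplus\dots\oplus\Z^{k_t}$ (where the all-ones vectors are fixed) is a valid and worthwhile clarification, not a deviation.
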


\subsection{Even lattices associated with codes over $\Z_p$}\label{Sec:Zp}

Let $p$ be an odd prime and set $\Z_p=\Z/p\Z$.
Let $\langle\cdot |\cdot \rangle:\Z_p^t\times\Z_p^t\to \Z_p$ be the canonical inner product of $\Z_p^t$; for $x=(x_i),y=(y_i)\in\Z_p^t$, $\langle x| y\rangle=\sum_{i=1}^t x_iy_i\pmod{p}$.
The \emph{Hamming weight} of an element $x=(x_i)\in\Z_p^t$ is $|\{i\mid x_i\neq0\}|$.
A subset $C$ of $\Z_p^t$ is called a \emph{code} of length $t$ over $\Z_p$ if $C$ is a subgroup of $\Z_p^t$.
A code $C$ over $\Z_p$ is said to be \emph{self-orthogonal} if $\langle c| c'\rangle=0$ for all $c,c'\in C$.
Note that a code $C$ is self-orthogonal if and only if $\langle c|c\rangle=0$ for all $c\in C$ since $p$ is an odd prime.

Let $R_i$ $(1\le i\le t)$ be a copy of the root lattice of type $A_{p-1}$.
Set $R=R_1\perp\dots\perp R_t.$
Then $\mathcal{D}(R)\cong\Z_p^t$.
For each $1\le i\le t$, fix a base $\Delta_i$ of the root system $R_i(2)$ of type $A_{p-1}$, and set $\Delta=\bigcup_{i=1}^t\Delta_i$.
Let $C$ be a code of length $t$ over $\Z_p$.
As in \eqref{Eq:ConstA}, we obtain the lattice $L_A(C)$ by Construction A from $C$.

By \eqref{Eq:inner}, for $x=(x_i),y=(y_i)\in\Z_p^t$, we have \begin{equation}
(\lambda_x|\lambda_x)=\frac{1}{p}\sum_{i=1}^tx_i(p-x_i)=\sum_{i=1}^t\left(x_i-\frac{x_i^2}{p}\right)\quad \text{and}\quad (\lambda_x|\lambda_y)\equiv -\frac{\langle x| y\rangle}{p}\pmod{\Z}.\label{Eq:innerp}
\end{equation}
Note that $\sum_{i=1}^tx_i(p-x_i)\in2\Z$.
Hence $(\lambda_x|\lambda_x)\in2\Z$ if and only if $\langle x| x\rangle=0.$ 
Thus we obtain the following proposition:

\begin{proposition}\label{P:so} 
The lattice $L_A(C)$ is even if and only if $C$ is self-orthogonal.
\end{proposition}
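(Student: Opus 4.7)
The plan is to leverage the two identities in \eqref{Eq:innerp} together with the remark right above the proposition that $(\lambda_x|\lambda_x)\in 2\Z$ if and only if $\langle x|x\rangle=0$. The key structural fact is that $R$ sits inside $L_A(C)$ as a full sublattice, and $L_A(C)/R\cong C$ via $\nu$, so every element of $L_A(C)$ has a canonical coset decomposition which lets me reduce even-ness of $L_A(C)$ to the even-ness of $(\lambda_x|\lambda_x)$ for $x$ ranging over $C$.

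For the "if" direction, I would take an arbitrary $\alpha\in L_A(C)$ and set $x=\nu(\alpha)\in C$, so that $\alpha=\lambda_x+r$ for some $r\in R$. Then
\[
(\alpha|\alpha)=(\lambda_x|\lambda_x)+2(\lambda_x|r)+(r|r).
\]
The middle term is in $2\Z$ because $\lambda_x\in R^*$ and $r\in R$ gives $(\lambda_x|r)\in\Z$; the last term is in $2\Z$ because the root lattice $R$ is even; and the first term is in $2\Z$ by the assumption $\langle x|x\rangle=0$ (which holds since $C$ is self-orthogonal and $x\in C$) combined with the quoted equivalence. Summing, $(\alpha|\alpha)\in 2\Z$.

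For the "only if" direction, note that for any $x\in C$ the vector $\lambda_x$ itself lies in $L_A(C)$, since $\nu(\lambda_x)=x\in C$. Even-ness of $L_A(C)$ then forces $(\lambda_x|\lambda_x)\in 2\Z$, which by the equivalence cited above gives $\langle x|x\rangle=0$ for every $x\in C$. To upgrade this to self-orthogonality, I apply the polarization trick: for $x,y\in C$, the sum $x+y$ also lies in $C$, so
\[
0=\langle x+y|x+y\rangle=\langle x|x\rangle+2\langle x|y\rangle+\langle y|y\rangle=2\langle x|y\rangle\quad\text{in }\Z_p,
\]
and since $p$ is odd, $2$ is invertible in $\Z_p$, so $\langle x|y\rangle=0$.

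There is no real obstacle here; the only subtle point is checking that the quoted equivalence "$(\lambda_x|\lambda_x)\in 2\Z\iff\langle x|x\rangle=0$" really yields even-ness (not merely integrality), which is why I need $p$ odd: each summand $x_i(p-x_i)$ is automatically even regardless, so $(\lambda_x|\lambda_x)=\frac{1}{p}\sum x_i(p-x_i)$ is in $\Z$ exactly when divisible by $p$, and then automatically in $2\Z$. The use of the odd prime $p$ also underlies the polarization step, so both directions genuinely rely on the hypothesis on $p$.
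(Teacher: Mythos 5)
Your proof is correct and follows essentially the same route as the paper: the paper derives the proposition directly from \eqref{Eq:innerp}, the observation that each $x_i(p-x_i)$ is even (so $(\lambda_x|\lambda_x)\in2\Z$ iff $\langle x|x\rangle=0$), and the remark earlier in Section 4.3 that, since $p$ is odd, self-orthogonality is equivalent to $\langle c|c\rangle=0$ for all $c\in C$ --- exactly your polarization step. Your coset decomposition $\alpha=\lambda_x+r$ with $r\in R$ just makes explicit the reduction the paper leaves implicit via Lemma \ref{L:genA}, so there is nothing to correct.
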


We now assume that $C$ is self-orthogonal.
It follows from $R\subset L_A(C)$ that $L_A(C)^*\subset R^*$.
Let $C^\perp$ denote the dual code of $C$, that is, $C^\perp=\{d\in \Z_p^t\mid \langle C| d\rangle=0\}$.
By \eqref{Eq:innerp} and Lemma \ref{L:genA}, we obtain the following:

\begin{lemma}\label{L:discA} $L_A(C)^*=L_A(C^\perp)$ and $|\mathcal{D}(L_A(C))|=|C^\perp/C|$.
\end{lemma}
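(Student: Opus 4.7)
The plan is to verify the two claims separately: first the lattice-theoretic identity $L_A(C)^* = L_A(C^\perp)$, and then extract the cardinality statement from it by a direct index computation using that both sides contain $R$ as a full sublattice.

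For the inclusion $L_A(C^\perp) \subseteq L_A(C)^*$, I would take an arbitrary $\beta \in L_A(C^\perp)$ and write it as $\beta = \lambda_d + r$ with $d = \nu(\beta) \in C^\perp$ and $r \in R$. By Lemma \ref{L:genA}, it suffices to check that $(\alpha|\beta) \in \Z$ for $\alpha$ ranging over a generating set of $L_A(C)$, namely the elements $\lambda_c$ ($c \in C$) together with $R$. For $\alpha \in R$ this is immediate since $\beta \in R^*$. For $\alpha = \lambda_c$, the congruence \eqref{Eq:innerp} gives $(\lambda_c|\lambda_d) \equiv -\langle c|d\rangle/p \pmod{\Z}$, and $\langle c|d\rangle = 0$ in $\Z_p$ because $d \in C^\perp$; the remaining cross term $(\lambda_c|r)$ lies in $\Z$ since $\lambda_c \in R^*$ and $r \in R$. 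Thus $(\alpha|\beta) \in \Z$.

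For the reverse inclusion $L_A(C)^* \subseteq L_A(C^\perp)$, I would use that $R \subseteq L_A(C)$ forces $L_A(C)^* \subseteq R^*$. Given $\beta \in L_A(C)^*$, write $\beta = \lambda_d + r$ with $d = \nu(\beta)$, $r \in R$. To show $d \in C^\perp$, pair $\beta$ against $\lambda_c$ for $c \in C$: since $\lambda_c \in L_A(C)$, we have $(\lambda_c|\beta) \in \Z$, and the same computation as above reduces this to $\langle c|d\rangle \equiv 0 \pmod p$. As $c$ ranges over $C$, this says $d \in C^\perp$, so $\beta \in L_A(C^\perp)$.

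Finally, since $L_A(C) = \nu^{-1}(C)$ and $L_A(C^\perp) = \nu^{-1}(C^\perp)$ both contain $R$ as a full sublattice, the quotients $L_A(C)/R$ and $L_A(C^\perp)/R$ are identified with $C$ and $C^\perp$ respectively. Hence
\[
|\mathcal{D}(L_A(C))| = [L_A(C)^* : L_A(C)] = [L_A(C^\perp) : L_A(C)] = |C^\perp|/|C| = |C^\perp/C|.
\]
There is no substantive obstacle here; the only thing to be careful about is invoking Lemma \ref{L:genA} so that dual lattice membership can be tested on a generating set, and keeping track of the sign/mod-$\Z$ conventions in \eqref{Eq:innerp}.
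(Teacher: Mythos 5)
Your proof is correct and takes essentially the same approach as the paper: the paper states Lemma \ref{L:discA} as an immediate consequence of the congruence \eqref{Eq:innerp} and the generating set from Lemma \ref{L:genA}, and your argument is exactly the expansion of that justification, with both inclusions checked on generators and the index computed via $L_A(C)/R\cong C$ and $L_A(C^\perp)/R\cong C^\perp$. The one point worth making explicit is that the standing self-orthogonality assumption gives $C\subseteq C^\perp$, so that $L_A(C)\subseteq L_A(C^\perp)$ and the index $[L_A(C^\perp):L_A(C)]=|C^\perp/C|$ makes sense, which your computation uses implicitly.
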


Define $\chi_{\Delta}$ as in \eqref{Eq:chi}, i.e., 
\begin{equation}
\chi_\Delta=\frac{1}{p}(\rho_{\Delta_1},\dots,\rho_{\Delta_t})\in\frac{1}{p}R^*\label{Eq:chip}.
\end{equation}
It follows from $p\in2\Z+1$ that $\rho_{\Delta_i}\in R_i$, which shows  $\chi_{\Delta}\in(1/p)R$.

Let $L_B(C)$ be the sublattice of $L_A(C)$ defined as in \eqref{Eq:ConstB}.
Since $L_A(C)$ is even, so is $L_B(C)$.
Fix a simple root $\alpha\in\Delta$.
Then $\Delta\subset \alpha+L_B(C)$ since $\alpha-\beta\in L_B(C)$ for any $\beta\in \Delta$.
By Lemma \ref{L:indexn} and $\chi_\Delta\in(1/p)R\subset (1/p)L_A(C)^*$, we have $|L_A(C):L_B(C)|=p$.
It follows from $(\chi_\Delta| \alpha)=1/p$ that $L_A(C)/L_B(C)=\langle \alpha+L_B(C)\rangle$.

\begin{lemma}\label{L:discB} $|\mathcal{D}(L_B(C))|=p^2|C^\perp/C|$ and $L_B(C)^*=L_A(C^\perp)+\Z\chi_\Delta$.
\end{lemma}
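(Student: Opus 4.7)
The plan is to prove the two assertions in sequence, treating the index claim first because it is essentially a determinant computation that does not require locating $L_B(C)^*$ explicitly, and then using it as a target to bracket $L_B(C)^*$ between two easily-described lattices.

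First, I would deduce $|\mathcal{D}(L_B(C))|=p^2|C^\perp/C|$ from the index identity $[L_A(C):L_B(C)]=p$ that has just been established and Lemma \ref{L:discA}. The standard relation $\det(K')=[K:K']^2\det(K)$ for a finite-index sublattice $K'\subset K$ gives $\det L_B(C)=p^2\det L_A(C)=p^2|C^\perp/C|$, and for any even lattice $|\mathcal{D}(\cdot)|=\det(\cdot)$. This is purely formal.

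Next, I would verify the inclusion $L_A(C^\perp)+\Z\chi_\Delta\subset L_B(C)^*$. The summand $L_A(C^\perp)=L_A(C)^*$ lies in $L_B(C)^*$ because $L_B(C)\subset L_A(C)$, and the generator $\chi_\Delta$ lies in $L_B(C)^*$ by the very definition of $L_B(C)$ as $\{\alpha\in L_A(C)\mid(\alpha|\chi_\Delta)\in\Z\}$.

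For the reverse inclusion, the plan is to compare indices. I would first observe that $\chi_\Delta\notin L_A(C^\perp)$: for a simple root $\alpha\in\Delta_i$ one has $(\chi_\Delta|\alpha)=(\rho_{\Delta_i}|\alpha)/p=1/p\notin\Z$, while every element of $L_A(C^\perp)=L_A(C)^*$ pairs integrally with $\alpha\in R\subset L_A(C)$. On the other hand $p\chi_\Delta=(\rho_{\Delta_1},\dots,\rho_{\Delta_t})\in R\subset L_A(C^\perp)$, so $\chi_\Delta+L_A(C^\perp)$ has order exactly $p$ in $(L_A(C^\perp)+\Z\chi_\Delta)/L_A(C^\perp)$. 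Stacking the three indices
\begin{equation*}
[L_A(C^\perp)+\Z\chi_\Delta:L_A(C^\perp)]=p,\quad [L_A(C^\perp):L_A(C)]=|C^\perp/C|,\quad [L_A(C):L_B(C)]=p,
\end{equation*}
multiplies to $p^2|C^\perp/C|$, which by the first step equals $[L_B(C)^*:L_B(C)]$. Combined with the inclusion $L_A(C^\perp)+\Z\chi_\Delta\subset L_B(C)^*$, this forces equality.

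No step looks seriously difficult here; the only place where one might slip is verifying that $\chi_\Delta\notin L_A(C^\perp)$ and that its coset has order exactly $p$, since both are needed to get the clean factor of $p$ in the index stacking. Everything else is bookkeeping with already-established facts.
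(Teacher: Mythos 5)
Your proposal is correct and follows essentially the same route as the paper: both proofs reduce the discriminant count to $[L_A(C):L_B(C)]=p$ together with Lemma \ref{L:discA}, and both identify $L_B(C)^*$ by observing $\chi_\Delta\in L_B(C)^*\setminus L_A(C)^*$ and closing the gap with an index count. The only cosmetic difference is that the paper invokes the duality $|L_B(C)^*:L_A(C)^*|=|L_A(C):L_B(C)|=p$ and concludes immediately since that index is prime, whereas you reach the same conclusion by the equivalent determinant formula $\det L_B(C)=p^2\det L_A(C)$ plus stacking the three indices over $L_B(C)$ (checking along the way, correctly, that $p\chi_\Delta\in R$ so the coset of $\chi_\Delta$ has order exactly $p$).
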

\begin{proof}
Since $|L_B(C)^*{:}L_A(C)^*|=|L_A(C){:}L_B(C)|=p$, we obtain $|\mathcal{D}(L_B(C))|=p^2|C^\perp/C|$ by lemma \ref{L:discA}.
By the definition of $L_B(C)$, we have $\chi_\Delta\in L_B(C)^*$.
Since $\chi_\Delta\notin L_A(C)^*$, we have the result.
\end{proof}

Let $c\in C$.
Since $L_A(C)$ is even, we have $(\lambda_c|\lambda_c)\in2\Z$.
By Lemma \ref{Lem:enorm}, $(\lambda_c|\chi_\Delta)\in\Z$, and $\lambda_c\in L_B(C)$.
In addition, for $c'\in C$, $\lambda_c+\lambda_{c'}\in\lambda_{c+c'}+L_B(C)$.
It follows from $L_A(\{\allzero\})=R\subset L_A(C)$ that $L_B(\{\allzero\})=\{\alpha\in R\mid (\alpha|\chi_\Delta)\in\Z\}\subset L_B(C).$
Hence we obtain the following lemma.

\begin{lemma}\label{L:genB}
For a generating set $\mathcal{C}$ of $C$, the set $\{\lambda_c\mid c\in \mathcal{C}\}$ and $L_B(\{\allzero\})$ generate $L_B(C)$ as a lattice.
\end{lemma}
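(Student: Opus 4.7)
The plan is to mirror the structure of Lemma \ref{L:genA} and reduce to that result. Let $L'$ denote the sublattice of $R^*$ generated by $\{\lambda_c \mid c \in \mathcal{C}\}$ together with $L_B(\{\allzero\})$. I want to show $L' = L_B(C)$, and I will prove both inclusions.

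For the inclusion $L' \subset L_B(C)$, I would just assemble the observations made in the paragraph immediately preceding the lemma: since $C$ is self-orthogonal, $(\lambda_c|\lambda_c) \in 2\Z$ for $c \in C$, so by Lemma \ref{Lem:enorm} we have $(\lambda_c|\chi_\Delta) \in \Z$, i.e.\ $\lambda_c \in L_B(C)$; and by definition $L_B(\{\allzero\}) = \{\alpha \in R \mid (\alpha|\chi_\Delta) \in \Z\} \subset L_B(C)$. Hence every generator of $L'$ lies in $L_B(C)$.

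For the reverse inclusion, take an arbitrary $\alpha \in L_B(C)$. Since $L_B(C) \subset L_A(C)$, Lemma \ref{L:genA} lets me write
\begin{equation*}
\alpha = \sum_{c \in \mathcal{C}} n_c \lambda_c + \beta
\end{equation*}
for some integers $n_c$ and some $\beta \in R$. Now I pair both sides with $\chi_\Delta$: by the definition of $L_B(C)$ the left side gives an integer, and since $(\lambda_c | \chi_\Delta) \in \Z$ for each $c \in \mathcal{C}$ as noted above, the sum $\sum_c n_c (\lambda_c | \chi_\Delta)$ is an integer. Therefore $(\beta|\chi_\Delta) \in \Z$, which says precisely $\beta \in L_B(\{\allzero\})$. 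Hence $\alpha \in L'$, completing the proof.

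There is no real obstacle here; the content of the lemma is that the definition of $L_B(C)$ via the pairing with $\chi_\Delta$ is compatible with the generating procedure of Construction A once one checks that the coset representatives $\lambda_c$ for $c$ in a generating set already lie in $L_B(C)$. That compatibility is exactly what Lemma \ref{Lem:enorm} and the self-orthogonality of $C$ provide.
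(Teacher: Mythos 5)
Your proof is correct and takes essentially the same route as the paper: the paper's argument is the paragraph preceding the lemma, which records exactly your ingredients ($\lambda_c\in L_B(C)$ for $c\in C$ via evenness and Lemma \ref{Lem:enorm}, and $L_B(\{\allzero\})\subset L_B(C)$), and your reverse inclusion—decomposing $\alpha\in L_B(C)$ via Lemma \ref{L:genA} and pairing with $\chi_\Delta$ to place the $R$-part in $L_B(\{\allzero\})$—is precisely the intended formalization of the paper's ``hence we obtain the following lemma.''
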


\begin{remark}\label{R:CC'} Let $C$ and $C'$ be codes over $\Z_p$.
Assume that $C$ and $C'$ are equivalent, that is, there exists $f_0\in \langle \varepsilon_i\mid 1\le i\le t\rangle{:}S_n$ such that $f_0(C)=C'$, where $\varepsilon_i$ acts on $\Z_p^t$ as $-1$ on the $i$-th component and as $1$ on the other components and the symmetric group $S_n$ acts on the coordinates as permutations.
Note that the diagram automorphism of $\Delta_i$ of order $2$ induces $\varepsilon_i$ on $R_i^*/R_i\cong \Z_p$.
Then $f_0$ induces an isometry $f:L_A(C)\to L_A(C')$.
Since $f$ fixes $\chi_\Delta$, we have $L_B(C)\cong L_B(C')$.
\end{remark}

\subsection{Characterizations of even lattices associated with codes over $\Z_p$}\label{Sec:ChaAB}
In this subsection, we give characterizations of the even lattices $L_A(C)$ and $L_B(C)$.

Let $p$ be an odd prime and let $C$ be a self-orthogonal code of length $t$ over $\Z_p$.
By the construction, $L_A(C)$ contains $R=L_A(\{\allzero\})$, which is isometric to the root lattice of type $A_{p-1
}^t$, as a full sublattice.

Conversely, if an even lattice $L$ contains a full sublattice $S$ isometric to the root lattice of type $A_{p-1}^t$, then there exists a self-orthogonal code $C$ of length $t$ over $\Z_p$ such that $L\cong L_A(C)$.
Indeed, $C=L/S\subset S^*/S\cong\Z_p^t$.
Hence we obtain the following proposition.

\begin{proposition}\label{P:ChaA} Let $L$ be an even lattice.
Then $L\cong L_A(C)$ for some self-orthogonal code $C$ of length $t$ over $\Z_p$ if and only if $L$ contains the root lattice of type $A_{p-1}^t$ as a full sublattice.
\end{proposition}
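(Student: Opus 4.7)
The plan is to prove the two directions separately; both are essentially unpacking definitions and invoking results already established in the section.

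For the forward direction, suppose $L \cong L_A(C)$ for some self-orthogonal code $C$ of length $t$ over $\Z_p$. By construction, $L_A(C) = \nu^{-1}(C)$ where $\nu \colon R^* \to R^*/R$, and since $\{\allzero\} \subset C$ we have $R \subset L_A(C)$. Because $R = R_1 \perp \dots \perp R_t$ is by definition an orthogonal sum of $t$ copies of the root lattice of type $A_{p-1}$, and $L_A(C)$ has rank $t(p-1) = \rank R$ by the observation in Section \ref{Sec:ConstAB}, $R$ is a full sublattice of $L_A(C)$ isometric to the root lattice of type $A_{p-1}^t$.

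For the backward direction, suppose $L$ is an even lattice containing a full sublattice $S$ isometric to the root lattice of type $A_{p-1}^t$. First I would identify $S$ with $R = R_1 \perp \dots \perp R_t$ via an isometry, so that $L$ may be viewed as a lattice with $R \subset L$ as a full sublattice. Since $R$ is full in $L$ and $L$ is integral (being even), we have $L \subset R^*$, and hence the quotient $C \mathrel{:=} L/R$ is a subgroup of $R^*/R = \mathcal{D}(R) \cong \bigoplus_{i=1}^t \Z_p = \Z_p^t$. By the very definition $L_A(C) = \nu^{-1}(C)$, we therefore obtain $L = L_A(C)$. Finally, since $L = L_A(C)$ is even by hypothesis, Proposition \ref{P:so} forces $C$ to be self-orthogonal, completing the proof.

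There is essentially no obstacle here: the content of the proposition is really just to package the preceding Proposition \ref{P:so} together with the elementary lattice-theoretic fact that a full integral overlattice of $R$ corresponds to a subgroup of $\mathcal{D}(R)$. The only small point to verify carefully is that the identification of $S$ with $R$ is legitimate (any two root lattices of type $A_{p-1}^t$ are isometric), and that this identification transports the evenness of $L$ to the evenness of $L_A(C)$, which is automatic since isometric lattices have the same norm form.
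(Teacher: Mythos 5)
Your proof is correct and takes essentially the same route as the paper's own argument: the forward direction observes that $R=L_A(\{\allzero\})$ is a full sublattice of $L_A(C)$, and the backward direction sets $C=L/S\subset S^*/S\cong\Z_p^t$ to recover $L\cong L_A(C)$, with self-orthogonality of $C$ forced by evenness via Proposition \ref{P:so}. The details you make explicit (integrality giving $L\subset R^*$, the identification of $S$ with $R$, and $L=\nu^{-1}(C)$) are precisely what the paper's terse ``Indeed'' line leaves implicit.
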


We will also discuss a characterization of the lattice $L_B(C)$.

\begin{proposition}\label{P:ChaB} Let $C$ be a self-orthogonal code over $\Z_p$.
Assume that $(L_A(C))(2)=R(2)$.
Let $L$ be a full sublattice of $L_A(C)$ with index $p$.
Then $L\cong L_B(C)$ if and only if $L$ is rootless.
\end{proposition}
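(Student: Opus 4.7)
The statement has two directions. For the easy ``only if'' direction, I would argue that $L_B(C)$ has no roots: any potential root of $L_B(C)$ lies in $L_A(C)(2)=R(2)$, so in some $R_j(2)$, and Lemma \ref{L:modp} gives $(\beta|\rho_{\Delta_j})\not\equiv 0\pmod p$, whence $(\beta|\chi_\Delta)=(\beta|\rho_{\Delta_j})/p\notin \Z$ and $\beta\notin L_B(C)$. Isometry invariance of rootlessness then yields the direction $L\cong L_B(C)\Rightarrow L$ is rootless.

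For the ``if'' direction, the plan is to exhibit an isometry of $L_A(C)$ sending $L_B(C)$ onto the given rootless $L$. Write $L=\ker\phi$ for a surjection $\phi\colon L_A(C)\to \Z_p$, and set $\phi_B(\alpha)=(\alpha|\rho_\Delta)\bmod p$, so that $L_B(C)=\ker\phi_B$. Since $L_A(C)(2)=R(2)$, rootlessness of $L$ is equivalent to $\phi|_{R_j}$ being nonzero on every root of $R_j$ for each $j$; in particular $R\not\subset L$, so $L+R=L_A(C)$ and $L\cap R$ has index $p$ in $R$. I would first apply an element of the Weyl group $W(R)=\prod_j W(R_j)\subset O(L_A(C))$ to arrange $\phi|_R=\phi_B|_R$ (equivalently $L\cap R=L_B(\allzero)$). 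The key input is transitivity: $W(R_j)=S_p$ acts transitively on homomorphisms $R_j\to \Z_p$ nonzero on every root, because such homomorphisms correspond (via partial sums along a base) to the $(p-1)!$ orderings of $\Z_p$ starting with $0$, and a direct stabilizer calculation shows that the $W(R_j)$-orbit of $\rho_{\Delta_j}$ in $R_j^*/pR_j^*$ has the same size $(p-1)!$, the stabilizer being the cyclic shift of order $p$.

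After this normalization, $\phi-\phi_B$ vanishes on $R$ and descends to a homomorphism $\psi\colon L_A(C)/R\cong C\to \Z_p$, and I would absorb $\psi$ using the elements $g_{\Delta,e}$ from \eqref{Eq:gre}. Each $g_{\Delta,e}$ lies in $W(R)\subset O(L_A(C))$, because every $g_{\Delta_i}$ belongs to $W(R_i)$ and hence acts trivially on $\mathcal{D}(R_i)$, so preserves $L_A(C)$. Using $g_{\Delta_i}^{e_i}(\rho_{\Delta_i})\in \rho_{\Delta_i}-p\lambda_{e_i}^i+pR_i$ (a consequence of \eqref{Eq:grho}) together with the pairing formula \eqref{Eq:innerp}, a direct computation gives
$$
(\phi_B\circ g_{\Delta,e}^{-1})(\lambda_x+r)\equiv \phi_B(\lambda_x+r)+\langle x|e\rangle \pmod p
$$
for $x\in C$ and $r\in R$. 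Since the map $\Z_p^t\to \hom(C,\Z_p)$ sending $e$ to $\langle\cdot|e\rangle|_C$ is surjective, I can pick $e$ with $\langle\cdot|e\rangle|_C=\psi$; then $g_{\Delta,e}(L_B(C))=\ker(\phi_B\circ g_{\Delta,e}^{-1})=\ker\phi=L$, giving $L\cong L_B(C)$. The main obstacle is the Weyl-group transitivity of the first step (the combinatorial stabilizer/orbit count); the second step is then an explicit but clean calculation anchored by the identity for $g_{\Delta_i}^{e_i}(\rho_{\Delta_i})$ modulo $pR_i$.
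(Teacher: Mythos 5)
Your proposal is correct, and while it shares the paper's overall skeleton (first normalize on the root sublattice $R$, then fix the remaining discrepancy on $C$ by twisting with an isometry $g_{\Delta,e}$), it substitutes a genuinely different key lemma for the normalization step. The paper argues geometrically: writing $L_A(C)=\bigcup_{i}(i\gamma+L)$ for a root $\gamma$, it uses Lemma \ref{Lem:Ap1} (embedding the graph of roots into extended Dynkin diagrams) to show each nontrivial coset carries exactly $pt$ roots forming a base together with the negated highest roots, re-chooses the base inside $(\gamma+L)(2)$ so that $L_B(\{\allzero\})\subset L$, and then corrects by a power of $g_{\Delta,e}$ with $e\in C_0^\perp\setminus C^\perp$, where $C_0=\{c\in C\mid (\lambda_c+L_B(\{\allzero\}))\cap L\neq\emptyset\}$. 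You instead encode index-$p$ sublattices as kernels of $\Z_p$-valued functionals and replace the root counting by transitivity of $W(R_j)\cong S_p$ on functionals nonvanishing on every root; this transitivity is indeed correct: such functionals on $A_{p-1}$ are exactly classes of bijections $\{1,\dots,p\}\to\Z_p$ modulo constants, on which $S_p$ acts transitively, and the stabilizer of the class of $\rho_{\Delta_j}$ in $R_j^*/pR_j^*$ is $\langle g_{\Delta_j}\rangle$ of order $p$ (visible from $g_{\Delta_j}(\rho_{\Delta_j})=\rho_{\Delta_j}-p\lambda_1^j$), so the orbit has the full size $(p-1)!$. Your endgame is a clean repackaging of the paper's: your $\ker\psi$ is the paper's $C_0$, and solving $\langle\cdot|e\rangle|_C=\psi$ by surjectivity of $e\mapsto\langle\cdot|e\rangle|_C$ subsumes the paper's choice of $e$ and the exponent $s$; your displayed formula for $\phi_B\circ g_{\Delta,e}^{-1}$ checks against \eqref{Eq:grho} and \eqref{Eq:innerp}, and rightly requires only $g_{\Delta,e}\in O(L_A(C))$, not fixed-point freeness, so $e$ need not have full Hamming weight here. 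As for what each buys: the paper's coset-root analysis is reused essentially verbatim in Theorem \ref{T:ChaB}, where no ambient $L_A(C)$ is given in advance, so it earns its keep there; your functional/orbit formulation is shorter and more transparent for this proposition, isolating the single combinatorial fact and making explicit the Weyl-group mechanism hidden in the paper's ``re-choose the base'' step. Two small points to spell out in a final write-up: $\phi_B$ is well defined on all of $L_A(C)$ because $\rho_\Delta\in R$ (as $p$ is odd) and $L_A(C)\subset R^*$; and $W(R)\subset O(L_A(C))$ holds for the same reason you give for $g_{\Delta,e}$, namely that the Weyl group acts trivially on $\mathcal{D}(R)$.
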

\begin{proof}
By the definition of $L_B(C)$ and Lemma \ref{L:modp}, the assumption $(L_A(C))(2)=R(2)$ implies that $L_B(C)$ is rootless.

Assume that $L$ is rootless.
Let $\gamma\in R(2)$ be a root.
Then $\gamma\in L_A(C)\setminus L$.
Since $p$ is a prime, we have
\begin{equation}
L_A(C)/L=\langle \gamma+L\rangle.\label{Eq:al}
\end{equation}
Hence $L_A(C)=\bigcup_{i=0}^{p-1}(i\gamma+L)$, and by $L(2)=\emptyset$, 
\begin{equation}
\bigcup_{i=1}^{p-1}(i\gamma+L)(2)=R(2).\label{Eq:iv}
\end{equation}

Let $1\le i\le p-1$ and let $\alpha,\beta\in (i\gamma+L)(2)$ with $\alpha\neq \beta$.
If $(\alpha|\beta)=1$, then $\alpha-\beta\in L(2)$, which contradicts that $L(2)=\emptyset$.
If $(\alpha|\beta)=-2$, then $\alpha=-\beta$ and $p=2$, which contradicts that $p$ is an odd prime.
Hence $(\alpha|\beta)\in \{0,-1\}$.
By Lemma \ref{Lem:Ap1}, $|(i\gamma+L)(2)|\le pt$.
It follows from $|(L_A(C))(2)|=|R(2)|=p(p-1)t$ and \eqref{Eq:iv} that $|(i\gamma+L)(2)|= pt$ for all $i$.
By Lemma \ref{Lem:Ap1}, $(\gamma+L)(2)$ consists of  a base of the root system $R(2)$ of type $A_{p-1}^t$ and the negated highest roots; we fix a base $\Delta\subset(\lambda+L)(2)$ such that $\gamma\in \Delta$.
Then $\sum_{\alpha\in\Delta} c_\alpha\alpha\in R$ belongs to $L$ if and only if $\sum_{\alpha\in\Delta}c_\alpha\equiv0\pmod{p}$, equivalently, the inner product $(\chi_\Delta|\sum_{\alpha\in\Delta} c_\alpha\alpha)\in\Z$, where $\chi_\Delta$ is the vector defined in \eqref{Eq:chip} with respect to $\Delta$.
Hence $L_B(\{\allzero\})=\{\beta\in R\mid (\beta| \chi_\Delta)\in\Z\}\subset L$.

For $c\in C$, let $\lambda_c\in L_A(C)$ defined as in \eqref{Eq:lambdac}.
By Lemma \ref{Lem:enorm}, $\lambda_c\in L_B(C)$.
Set $$C_0=\{c\in C\mid ({\lambda}_c+L_B(\{\allzero\}))\cap L\neq\emptyset\}.$$
If $C=C_0$, then $L=L_B(C)$ by Lemma \ref{L:genB}; we now assume that $C\neq C_0$.
It follows from $L_B(\{\allzero\})\subset L$ that $L_B(C_0)\subset L$.

First, we show that $|C:C_0|=p$.
Let $d,d'\in C\setminus C_0$.
By \eqref{Eq:al}, there exist $1\le i,i'\le p-1$ such that $\lambda_d+i\gamma,\lambda_{d'}+i'\gamma\in L$, where $\gamma\in\Delta$ with $(\gamma|\chi_\Delta)=1/p$.
Let $1\le j\le p-1$ such that $i+ji'\equiv 0\pmod p$.
Then $$\lambda_d+i\gamma+j(\lambda_{d'}+i'\gamma)\in (\lambda_{d+jd'}+L_B(\{\allzero\}))\cap L.$$
Hence $d+jd'\in C_0$, and $|C:C_0|=p$.

Let $e=(e_i)\in C_0^\perp\setminus C^\perp$ and let $g_{\Delta,e}
\in O(L_A(C))$ as in \eqref{Eq:gre}.
Let us show that some power of $g_{\Delta,e}$ sends $L$ to $L_B(C)$.

Let $c\in C_0$.
Note that for $1\le i\le t$ and simple roots $\alpha_j^i$ of $R_i$, we have $(\sum_{j=1}^q\alpha_j^i| \rho_{\Delta_i})= (q\alpha_1^i|\rho_{\Delta_i})=q$.
Hence, by \eqref{Eq:grho}, we have 
\begin{equation}
g_{\Delta,e}(\lambda_c)\in \lambda_c-(e_1c_1\alpha_1^1,\dots,e_tc_t\alpha_1^t)+L_B(\{\allzero\}).\label{Eq:grep}
\end{equation}
It follows from $\langle c| e\rangle=0$ that $\sum_{i=1}^t c_ie_i\equiv0\pmod{p}$, which shows $(e_1c_1\alpha_1^1,\dots,e_tc_t\alpha_1^t)\in L_B(\{\allzero\})$.
Thus $g_{\Delta,e}(\lambda_c+L_B(\{\allzero\}))=\lambda_c+L_B(\{\allzero\})$.

Recall that $L/L_B(C_0)=\langle {\lambda}_d+j\gamma+L_B(C_0)\rangle$ for some $d\in C\setminus C_0$ and $1\le j\le p-1$.
Take $1\le s\le p-1$ with $-s\langle c| d\rangle+j\equiv 0\pmod{p}$.
By \eqref{Eq:grho} and \eqref{Eq:gre}, we have $$(g_{\Delta,e})^s(\lambda_d+j\gamma)\in \lambda_d-s(e_1d_1 \alpha_1^1,\dots,e_td_t\alpha_1^t)+(g_{\Delta,e})^s(j\gamma)+L_B(\{\allzero\}).$$
Since $$((g_{\Delta,e})^s(\lambda_d+j\gamma)| \chi_\Delta)\equiv \frac{-s\langle c| d\rangle+j}{p}\equiv0\pmod\Z,$$
we have 
$(g_{\Delta,e})^s(\lambda_d+j\gamma+L_B(\{\allzero\}))=\lambda_d+L_B(\{\allzero\})$.
Therefore, by Lemma \ref{L:genB}, the isometry $(g_{\Delta,e})^s\in O(L_A(C))$ sends $L$ to $L_B(C)$, which proves this proposition.
\end{proof}

Now, let us give another characterization of the lattices $L_A(C)$ and $L_B(C)$.

\begin{theorem}\label{T:ChaB} Let $L$ be a rootless even lattice of rank $m$ and let $p$ be an odd prime.
Assume that there exist $\lambda+L\in \mathcal{D}(L)$ with $p\lambda\in L$ and a fixed-point free isometry $g\in O(L)$ satisfying the following:
\begin{enumerate}[{\rm (a)}]
\item $|N(2)|=pm$, where $N={\rm Span}_\Z\{\lambda,L\}$;
\item $g(\lambda+L)=\lambda+L$.
\end{enumerate}
Then there exists a self-orthogonal code $C$ of length $m/(p-1)$ over $\Z_p$ such that $N\cong L_A(C)$ and $L\cong L_B(C)$.
Here $g$ corresponds to $g_{\Delta,e}$ defined in \eqref{Eq:gre} with respect to some $e\in C^\perp$ of Hamming weight $m/(p-1)$ and some base $\Delta$ of $N(2)$.
\end{theorem}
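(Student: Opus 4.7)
The plan is to reduce the theorem to Propositions \ref{P:ChaA} and \ref{P:ChaB} by identifying the root system $\Phi := N(2)$ as being of type $A_{p-1}^t$ with $t = m/(p-1)$. First, since $\Phi \neq \emptyset$ by hypothesis (a) and $L$ is rootless, $\lambda+L$ is nontrivial in $\mathcal{D}(L)$; combined with $p\lambda \in L$ and $p$ prime, this forces $[N:L] = p$. For any $\gamma \in \Phi$ we have $\gamma \notin L$, hence $N/L = \langle\gamma+L\rangle$ and $\Phi = \bigsqcup_{i=1}^{p-1}(i\gamma+L)(2)$. The same argument used in the proof of Proposition \ref{P:ChaB} shows that for distinct $\alpha,\beta \in (i\gamma+L)(2)$ one has $(\alpha|\beta) \in \{0,-1\}$: the value $1$ would put $\alpha-\beta$ in $L(2) = \emptyset$, and $-2$ would force $p \mid 2i$, impossible for $p$ odd and $1 \le i \le p-1$.

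The crucial step is to show that $\Phi$ has type $A_{p-1}^t$. Decompose $\Phi = \bigsqcup_j \Phi_j$ into irreducible components of type $X_{n_j}$ with root lattices $Q_j$. Since $\Phi_j \neq \emptyset$ and $L$ is rootless, the composite $Q_j \hookrightarrow N \twoheadrightarrow N/L \cong \Z_p$ is surjective, so $[Q_j : Q_j \cap L] = p$ and $Q_j\cap L$ is a rootless sublattice of $Q_j$ of index $p$. A partial-sums argument analogous to the one in the proof of Proposition \ref{P:ChaB} forces $n \le p-1$ when $X_{n_j} = A_n$, while parallel analysis gives $n \le (p-1)/2$ for $D_n$ and places restrictive conditions on $E_6, E_7, E_8$. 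In every admissible case the ratio $|\Phi_j|/n_j$ satisfies $|\Phi_j|/n_j \le p$, with equality precisely when $X_{n_j} = A_{p-1}$. Combining $\sum_j |\Phi_j| = pm$ with the rank bound $\sum_j n_j \le m$ yields
\[
pm \;=\; \sum_j n_j \cdot \frac{|\Phi_j|}{n_j} \;\le\; p \sum_j n_j \;\le\; pm,
\]
forcing equality throughout: every $\Phi_j$ is of type $A_{p-1}$, and $t := m/(p-1)$ components appear.

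With the root system identified, $\Z\Phi$ is a full sublattice of $N$ isometric to $R = A_{p-1}^t$, so Proposition \ref{P:ChaA} supplies a self-orthogonal code $C$ of length $t$ over $\Z_p$ with $N \cong L_A(C)$; since $(L_A(C))(2) = \Phi = R(2)$, Proposition \ref{P:ChaB} applied to the rootless index-$p$ full sublattice $L \subset N$ produces $L \cong L_B(C)$. Finally, under this identification, $g \in O(L_B(C))$ preserves the generator $\lambda + L$ of $N/L$. A suitable choice of the base $\Delta$ of $N(2)$ (absorbing an element of $O(L_A(C))$ if needed to bring $g$ into standard form component-by-component), together with Lemmas \ref{L:gNc} and \ref{L:fpf2}, realizes $g$ as $g_{\Delta,e}$ for some $e \in C^\perp$; the fixed-point freeness of $g$ then forces $e_i \not\equiv 0 \pmod p$ for every $i$, i.e., the Hamming weight of $e$ equals $t = m/(p-1)$. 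The main obstacle is the combinatorial analysis that rules out non-$A$ irreducible components: while the $A$-type partial-sums argument combined with the tight averaging inequality is clean, the $D$- and exceptional-type cases each require a separate verification that their presence would violate either the rootless index-$p$ constraint or the overall root count.
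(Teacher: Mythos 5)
Your reduction to Propositions \ref{P:ChaA} and \ref{P:ChaB} and your endgame are in the right spirit, but the central step --- showing every irreducible component of $N(2)$ has type $A_{p-1}$ --- is where your proposal has a genuine gap, and it is exactly the point where the paper's proof does something different. You attempt a type-by-type analysis of all simply-laced components and assert that the presence of a rootless index-$p$ sublattice forces $|\Phi_j|/n_j\le p$ with equality only for $A_{p-1}$; but you prove this only for type $A$ (where the partial-sums argument does give $n\le p-1$), your stated bound for $D_n$ is incorrect (the values $\phi(e_i)$ need only lie in distinct $\{\pm a\}$-classes of $\Z_p$, one of which may be zero, so $n\le (p+1)/2$ rather than $(p-1)/2$ --- harmless here, since $|\Phi_j|/n_j=2(n-1)\le p-1$ still holds), and for $E_6,E_7,E_8$ you offer nothing. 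These exceptional exclusions are not routine: since $|\Phi_j|/n_j$ equals the Coxeter number $h$, you would need that an irreducible simply-laced root lattice admits a rootless index-$p$ sublattice only if $p\ge h$ (e.g.\ that $E_8$ has none for odd $p\le 29$), which amounts to positivity of the point count $\prod_i(p-m_i)$ off the Weyl arrangement over $\F_p$ --- a nontrivial fact, with the bad primes $3,5$ requiring separate care. Your own closing sentence concedes these verifications are missing, so as written the proof is incomplete at its crux.

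The paper avoids this classification entirely by using the isometry $g$ and hypothesis (b) from the outset rather than only at the end. Since $g$ is fixed-point free of prime order $p$, one has $\sum_{i=0}^{p-1}g^i=0$ on $\Q\otimes_\Z L$; hence for $\gamma\in(\lambda+L)(2)$ the orbit inner products satisfy $\sum_{i=1}^{p-1}(\gamma|g^i\gamma)=-(\gamma|\gamma)=-2$ with each term in $\{0,-1\}$ by rootlessness of $L$, so the $g$-orbit of $\gamma$ is precisely an affine $A_{p-1}$ configuration (a base plus the negated highest root), and the same trick shows any root outside $X_1=\mathrm{Span}_\Z\{g^i\gamma\}$ is orthogonal to $X_1$; recursion plus the count $|N(2)|=pm$ from (a) yields $N(2)\cong A_{p-1}^t$ with $t=m/(p-1)$ directly. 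This construction has a second payoff that your sketch also misses: each $A_{p-1}$ component is a single $g$-orbit, so $g$ automatically preserves every component and the extended base $\tilde\Delta_i\subset s\lambda+L$, feeding into Lemma \ref{L:dih} (which you never invoke) to identify $g$ with $g_{\Delta,e}$. In your route you would additionally have to rule out $g$ permuting the $t$ components in $p$-cycles (such a cycle would create a nonzero diagonal fixed subspace, contradicting fixed-point freeness --- true, but unaddressed) and to justify the $g$-invariant choice of $\tilde\Delta$, for which ``absorbing an element of $O(L_A(C))$'' is not a substitute. In short: your skeleton is salvageable, but the decisive component-identification and the $g$-normalization both need the orbit argument (or a serious substitute), and the paper's use of (b) at the start is what makes the theorem's proof short.
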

\begin{proof} 

First, we will show that $(\lambda+L)(2)$ contains a base of the root system of type $A_{p-1}$.
Let $\gamma \in (\lambda+L)(2)$.
By (b), $g$ acts on $(\lambda+L)(2)$.
Since this action is fixed-point free, we have $g^i(\gamma) \in (\lambda+L)(2)$ and  $\gamma \neq g^i(\gamma) $ for any $1\le i\le p-1$.
Since both $\gamma$ and $g^i(\gamma)$ are roots and $p\neq2$, we have $(\gamma| g^i(\gamma))\in\{0,\pm1\}$ for $1\le i\le p-1$.
If $(\gamma| g^i(\gamma))= 1$, then $(1-g^i)(\gamma)\in L(2)$, which contradicts that $L(2)=\emptyset$. 
Hence $(\gamma|g^i(\gamma))\in\{-1,0\}$.
It follows from $\sum_{i=0}^{p-1}g^i(\gamma)=0$ that
$$\sum_{i=1}^{p-1} (\gamma| g^i(\gamma)) =-(\gamma|\gamma) =-2.$$ 
Then, there exists $1 \leq j \leq p-1$ such that 
\[
(\gamma| g^j(\gamma))= (\gamma| g^{p-j}(\gamma))=-1 \text{ and } (\gamma| g^q(\gamma)) =0 \text { for } q\neq j, p-j.
\] 
Hence $\{g^i(v)\mid 0\le i\le p-1\}$ is the union of a base and the negated highest root of type $A_{p-1}$; $X_1={\rm Span}_\Z\{ g^i(v)\mid 0\le i\le p-1\}$ is isometric to the root lattice of type $A_{p-1}$. 
For $1\le s\le p-1$, $g(s\lambda+L)=s\lambda+L$, and  by the same argument, $(s\lambda+L)\cap X_1(2)$ is also the union of a base and the negated highest root of the root system $X_1(2)$ of type $A_{p-1}$.

Assume that $N(2)\neq X_1(2)$; let $\beta \in N(2)\setminus X_1$. 
Let us show that $\beta \perp X_1$.
Note that $\beta\in (s\lambda+L)(2)$ for some $1\le s\le p-1$.
Let $\alpha\in (s\lambda+L)\cap X_1(2)$.
By the same argument as above, we have $(\alpha|\beta)\in\{0,-1\}$.
If $(\alpha|\beta)=-1$, then by $\sum_{i=0}^{p-1}g^i(\alpha)=0$ and $(g^i(\alpha)|\beta)\in\{0,\pm1\}$, there exists $1\le j\le p-1$ such that $(g^j(\alpha)| \beta) =1$, and hence $g^j(\alpha) -\beta\in L(2)$, which is a contradiction.
Thus, $\beta \perp ((s\lambda+L)\cap X_1)(2)$.
Since $X_1$ is spanned by $((s\lambda+L)\cap X_1)(2)$, we have $\beta\perp X_1$.

Set $X_2={\rm Span}_\Z\{g^i(\beta)\mid 0\le i\le p-1\}$.
Then by the same argument as in the case of $X_1$, the even lattice $X_2$ is isometric to the root lattice of type $A_{p-1}$.
By recursion and the finiteness of $N(2)$, the set $N(2)$ is the orthogonal sum of $t$ copies of the root system of type $A_{p-1}$ for some $t$.
Hence $|N(2)|=tp(p-1)$.
On the other hand, by (a), we have $|N(2)|=pm$.
Hence $t=m/(p-1)$.
In particular, $N$ contains $A_{p-1}^t$ as a full sublattice.
Since $|N:L|=p$ and $N\setminus L$ has a root, $N$ is even.
By Proposition \ref{P:ChaA}, $N\cong L_A(C)$ for some self-orthogonal code $C$ of length $t$ over $\Z_p$.
Since $L$ is a rootless index $p$ sublattice of $N$, we have $L\cong L_B(C)$ by Proposition \ref{P:ChaB}. 
Let $\Delta$ be a base of $N(2)$ corresponding to the definition of $L_B(C)$ and let $\tilde{\Delta}$ be the union of $\Delta$ and the negated highest root.
Then $\tilde\Delta\subset s\lambda+L\in N/L$ for some $1\le s\le p-1$.
By (b), $g$ preserves every $\tilde\Delta_i\subset\tilde\Delta$.
By Lemmas \ref{L:dih} and \ref{L:fpf2}, $g=g_{\Delta,e}$ for some $e\in(\Z_p^\times)^t$, and by Lemmas \ref{L:gNc} and \ref{L:discA}, $e\in C^\perp$.
Note that the Hamming weight of $e$ is $t=m/(p-1)$. 
\end{proof}

\begin{remark} It was proved in \cite[Proposition 1.8]{Sh04} that a rootless even lattice $L$ of rank $m$ can be constructed by Construction B from a doubly even binary code if and only if there exists $\lambda+L\in \mathcal{D}(L)$ such that $2\lambda\in L$ and $|(\lambda+L)(2)|=2m$.
Note that any lattice contains the $-1$-isometry, the fixed-point free isometry of order $2$.
\end{remark}

\subsection{Coinvariant lattices of the Leech lattice and Construction B}\label{S:example}
In this subsection, we describe some coinvariant lattices of the Leech lattice $\Lambda$ as even lattices constructed by Construction B.
We adopt the notations of \cite{ATLAS} (cf.\ \cite{HL90}) for conjugacy classes of $O(\Lambda)$.

Let $\Lambda$ be the Leech lattice.
For an element $h$ in the conjugacy class $pX$ of $O(\Lambda)$, we denote by $\Lambda_{pX}$ the associated  coinvariant lattice $\Lambda_h$ (see \eqref{Eq:L_g} for the definition).
Note that the action of $h$ on $\Lambda_{pX}$ is fixed-point free.
By using Theorem \ref{T:ChaB} and MAGMA \cite{MAGMA}, one can verify that there exists a self-orthogonal code $C$ over $\Z_p$ such that $\Lambda_{nX}\cong L_B(C)$ if $nX\in\{3B,3C,5B,5C,7B\}$.
We summarize in Table \ref{T:example} the rank of $\Lambda_{pX}$, the root system of $L_A(\{\allzero\})(2)$, the discriminant group $\mathcal{D}(\Lambda_{pX})$, the length $t$ of $C$ and $\dim C$.
Note that $\dim C$ is determined by $\mathcal{D}(\Lambda_{pX})$ (see Lemma \ref{L:discB}).

\begin{longtable}[c]{|c|c|c|c|c|} 
\caption{Some coinvariant lattices of the Leech lattices} \label{T:example}\\
\hline 
Conjugacy class  & rank $\Lambda_{pX}$& root system & $\mathcal{D}(\Lambda_{pX})$ &$[p,t,\dim C]$ \\ \hline \hline 
$3B$&$12$&$A_2^6$&$\Z_3^6$&$[3,6,1]$ \\ \hline
$3C$&$18$&$A_2^9$&$\Z_3^5$&$[3,9,3]$ \\ \hline
$5B$&$16$&$A_4^4$&$\Z_5^4$&$[5,4,1]$ \\ \hline 
$5C$&$20$&$A_4^5$&$\Z_5^3$&$[5,5,2]$ \\ \hline 
$7B$&$18$&$A_6^3$&$\Z_7^3$&$[7,3,1]$ \\ \hline 
\end{longtable}

A code $C$ over $\Z_p$ with the parameter in Table \ref{T:example} is unique up to equivalence if $L_B(C)$ is rootless as follows; see Remark \ref{R:CC'} for the equivalence of codes over $\Z_p$.

\begin{lemma}\label{L:uniqueC} Let $C$ be a self-orthogonal code of length $t$ over $\Z_p$.
Assume that $L_B(C)$ is rootless.
\begin{enumerate}[{\rm (1)}]
\item If $[p,t,\dim C]=[3,6,1]$, then $C$ is equivalent to $\langle (1,1,1,1,1,1)\rangle_{\Z_3}$.
\item If $[p,t,\dim C]=[3,9,3]$, then $C$ is equivalent to the code over $\Z_3$ with generator matrix
$$\begin{pmatrix}1&1&1&1&1&1&1&1&1\\ 1&1&1&2&2&2&0&0&0\\
1&2&0&1&2&0&1&2&0\end{pmatrix}.$$
\item If $[p,t,\dim C]=[5,4,1]$, then $C$ is equivalent to $\langle (1,1,2,2)\rangle_{\Z_5}$.
\item If $[p,t,\dim C]=[5,5,2]$, then $C$ is equivalent to the code over $\Z_5$ with generator matrix
$$\begin{pmatrix}1&1&1&1&1\\ 1&2&4&3&0\end{pmatrix}.$$
\item If $[p,t,\dim C]=[7,3,1]$, then $C$ is equivalent to $\langle (1,2,3)\rangle_{\Z_7}$.
\end{enumerate}
\end{lemma}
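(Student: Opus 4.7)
The plan is to reduce the lemma to a combinatorial constraint on the nonzero codewords of $C$, then enumerate under the equivalence group.

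By Lemma \ref{L:modp} every root of $R$ lies outside $L_B(C)$, so $L_B(C)$ is rootless if and only if $L_A(C)(2)=R(2)$, i.e.\ no coset $\lambda_c+R$ with $c\in C\setminus\{0\}$ contains a norm-$2$ vector. Since $R=R_1\perp\cdots\perp R_t$ and the minimum norm of $\lambda_j+A_{p-1}$ equals $j(p-j)/p$ for $j\in\{0,\ldots,p-1\}$, this amounts to
\[
\sum_{i=1}^{t} c_i(p-c_i)\;>\;2p\qquad\text{for every }c=(c_i)\in C\setminus\{0\},
\]
where each $c_i$ is taken in $\{0,\ldots,p-1\}$. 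Combined with the self-orthogonality $\sum_i c_i^2\equiv 0\pmod p$, this is the governing constraint.

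For the one-dimensional cases (1), (3), (5) I would write $C=\langle c_0\rangle$ and enumerate $c_0$ modulo the equivalence group $S_t\ltimes\{\pm 1\}^t$ (cf.\ Remark \ref{R:CC'}), requiring the bound above for every scalar multiple $kc_0$, $k\in\Z_p^\times$. For $p=3$, $t=6$ the bound reads $\wt(c_0)\geq 4$ and self-orthogonality forces $\wt(c_0)\equiv 0\pmod 3$, so $\wt(c_0)=6$ and hence $c_0\sim(1,1,1,1,1,1)$. For $p=5$, $t=4$ a finite case-check on the multiset of ``absolute values'' $\{|c_{0,i}|_p\}$, where $|a|_p=\min(a,p-a)$, shows that the only multiset compatible with both self-orthogonality and the rootless bound for \emph{all} four scalar multiples of $c_0$ is $\{1,1,2,2\}$. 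The case $p=7$, $t=3$ is analogous: only the multiset $\{1,2,3\}$ survives.

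For the higher-dimensional cases (2) and (4) the same rootlessness and self-orthogonality constraints must hold for each of the $p^{\dim C}-1$ nonzero codewords. In case (2) the bound simplifies (as $p=3$) to $\wt(c)\geq 4$, and self-orthogonality forces $\wt(c)\equiv 0\pmod 3$, so every nonzero codeword of $C\subset\Z_3^9$ has weight in $\{6,9\}$. Using $\{\pm 1\}^t\rtimes S_t$ one puts a weight-$9$ codeword into all-ones form and then enumerates the remaining two independent weight-$6$ codewords; the constraints force, up to equivalence, the code spanned by the listed generators. Case (4) is handled similarly with the refined bound $4n_1(c)+6n_2(c)>10$, where $n_j(c)=\#\{i:|c_i|_5=j\}$; a finite enumeration yields the unique code spanned by $(1,1,1,1,1)$ and $(1,2,4,3,0)$.

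The main obstacle is the combinatorial enumeration in (2) and (4): one must rule out all non-equivalent candidate bases by checking the weight bound on all linear combinations. In practice this last step is most efficiently verified by a short computer search in MAGMA \cite{MAGMA}, which is the standard approach used in this literature to pin down uniqueness of codes with prescribed weight enumerators.
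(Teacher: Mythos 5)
Your argument is correct and is essentially the paper's own proof in expanded form: the paper likewise deduces from rootlessness of $L_B(C)$ that $C$ has no codewords of Hamming weight $3$ (for $p=3$) resp.\ weight $2$ (for $p=5$) --- your inequality $\sum_{i}c_i(p-c_i)>2p$ for all $c\in C\setminus\{0\}$ is just the general form of this constraint --- and then settles uniqueness by a direct finite check up to the equivalence of Remark \ref{R:CC'}. One small repair: the implication you actually need, namely that rootlessness of $L_B(C)$ forces $L_A(C)(2)=R(2)$, is not what Lemma \ref{L:modp} gives (that lemma yields only the converse direction); it holds because a root in $\lambda_c+R$ forces the minimal vector $\lambda_c$ itself to have norm $2$, and then $(\lambda_c|\chi_\Delta)\in\Z$ by Lemma \ref{Lem:enorm}, so $\lambda_c$ would be a root of $L_B(C)$.
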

\begin{proof} Since $L_B(C)$ is rootless, $C$ has no  codewords of Hamming weight $3$ and $2$ if $p=3$ and $p=5$, respectively.
One can directly check that there is only one possible code with given dimension and length, up to equivalence.
\end{proof}

\begin{remark} The dual codes of the codes in Lemma \ref{L:uniqueC} contains a codeword of Hamming weight $t$.
\end{remark}

\begin{remark}\label{R:2A} For the conjugacy classes $2A$ and $-2A$ of $O(\Lambda)$, the coinvariant lattices are $\sqrt2E_8$ and the Barnes-Wall lattice of rank $16$, respectively.
These lattices are constructed by Construction B from the binary codes $\langle (1^8)\rangle_{\Z_2}$ and the Reed-Muller code $RM(1,4)$, respectively.
\end{remark}

\section{Cyclic orbifolds of lattice VOAs having extra automorphisms } \label{extraauto}
Let $L$ be a positive-definite rootless even lattice of rank $m$. 
Let $g\in O(L)$ be a fixed-point free isometry of odd prime order $p$.
We will classify $L$ such that $V_L^{\hat{g}}$ has extra automorphisms.

\begin{remark}\label{R:Sh04a} If a fixed-point free isometry of $L$ has order $2$, then it must be the $-1$-isometry.
Let $\theta\in O(\hat{L})$ be a lift of the $-1$-isometry of $L$.
For a rootless even lattice $L$, $V_L^{\theta}$ has an extra automorphism if and only if $L\cong L_B(C)$ for some doubly even binary code $C$ (\cite[Theorem 3.15]{Sh04}).
\end{remark}

Assume that $V_L^{\hat{g}}$ has an extra automorphism $\tau$.
Then one of the following holds (cf. Definition \ref{D:untw}):
\begin{enumerate}[(I)]
\item {\bf $V_L(1)\circ\tau$ is of untwisted type}; notice that 
\begin{equation*}
V_L(1)\circ\tau\not\cong V_L(r)\quad {\rm for\ all}\ 1\le r\le p-1.\label{Eq:ext}
\end{equation*}
Otherwise, by Lemma \ref{Lem:conj} (2) and \eqref{Eq:fusionun}, we have 
$$\{V_L(i)\circ\tau\mid 0\le i\le p-1\}=\{(V_L(1)\circ\tau)^{\boxtimes i}\mid 0\le i\le p-1\}=\{V_L(i)\mid 0\le i\le p-1\},$$ and $\tau$ is not extra by Proposition \ref{P:stabVL1}.
Since $V_L(1)$ is a simple current module, so is $V_L(1)\circ\tau$.
Hence, by the discussions in Section \ref{S:un}, 
$$V_L(1)\circ\tau\cong V_{\lambda+L}(r)$$
for some $\lambda+L\in \mathcal{D}(L)\setminus\{L\}$ with $(1-g)\lambda\in L,\ 0\le r\le p-1$. 
\medskip

\item {\bf $V_L(1)\circ\tau$ is of twisted type};   
in this case, $$V_L(1)\circ \tau\cong V_{L}^T[\hat{g}^s](j)$$ for some irreducible $\hat{g}^s$-twisted module $V_{L}^T[\hat{g}^s]$ with $1\le s\le p-1$ and $0\le j\le p-1$. 
 
\end{enumerate}
We will deal with Cases (I) and (II) in Sections \ref{S:1} and \ref{S:7}, respectively.  
\begin{remark}
We note that Cases (I) and (II) are not exclusive. For some special lattices, both (I) and (II) occur, for example, $\sqrt{2}E_8$, the Barnes-Wall lattice of rank $16$ and Coxeter-Todd lattice of rank $12$ (\cite{Sh04,CLS}).   
\end{remark}

\section{Case (I):   $V_L(1)\circ\tau \cong V_{\lambda+L}(r)$ for some $\lambda+L\in \mathcal{D}(L)\setminus\{L\}$ and $\tau \in \Aut(V_L^{\hat{g}})$}\label{S:1}

In this section, we classify even lattices $L$ and isometries $g$ of odd prime order satisfying Case I in Section \ref{extraauto}.

\subsection{Necessary conditions on even lattices for Case (I)}

For latter use, we consider a slightly general setting; there exist $\tau\in \Aut(V_L^{\hat{g}})$, $0\le r\le p-1$, $1\le j\le p-1$ and $\lambda+L\in \mathcal{D}(L)\setminus \{L\}$ with $(1-g)\lambda\in L$ such that $$V_L(j)\circ\tau\cong V_{\lambda+L}(r).$$
Then $\dim V_{L}(j)_1=m/{(p-1)}$ and $\dim V_{\lambda+L}(r)_1=|(\lambda+L)(2)|/p$ are equal, namely, 
\begin{equation*}
|(\lambda+L)(2)|=\frac{pm}{p-1}.\label{Eq:lambda2}
\end{equation*}
It follows from \eqref{Eq:fusionun} that $V_{\lambda+L}(r)^{\boxtimes p}=V_{p\lambda+L}(r')$ for some $0\le r'\le p-1$.
Since $V_L(j)^{\boxtimes p}=V_L(0)$, we have $V_{\lambda+L}(r)^{\boxtimes p}=V_L(0)$.
Hence $p\lambda\in L$.
In addition, for $1\le q\le p-1$, there exist $1\le j_q\le p-1$ and $0\le r_q\le p-1$ such that 
\begin{equation}
V_L(j_q)\circ \tau\cong V_{q\lambda+L}(r_q).\label{Eq:conj}
\end{equation}
By the same argument as above, we have$$|(q\lambda+L)(2)|=\frac{pm}{p-1}.$$
Set $N={\rm Span}_\Z\{\lambda,L\}$.
Then by $L(2)=\emptyset$, we have $$|N(2)|=\sum_{i=1}^{p-1}|(i\lambda+L)(2)|=pm.$$
By our assumption that $V_L(j)\circ\tau\cong V_{\lambda+L}(r)$, $V_{\lambda+L}$ is $\hat{g}$-stable and we have $g(\lambda+L)=\lambda+L$.
Thus, by Theorem \ref{T:ChaB}, we obtain the following:

\begin{proposition}\label{P:(I)} Let $L$ be a rootless even lattice of rank $m$ and let $g\in O(L)$ be a fixed-point free isometry of odd prime order $p$.
Let $\hat{g}\in O(\hat{L})$ be a lift of $g$. 
Suppose there exist $\tau\in\Aut(V_L^{\hat{g}})$, $1\le j\le p-1$, $0\le r\le p-1$ and $\lambda\in \mathcal{D}(L)\setminus\{L\}$ with $(1-g)\lambda\in L$ such that $V_L(j)\circ\tau\cong V_{\lambda+L}(r)$. 
Then $L\cong L_B(C)$ for some self-orthogonal code $C$ of length $m/(p-1)$ over $\Z_p$.   Moreover, $g$ corresponds to $g_{\Delta,e}$ defined in \eqref{Eq:gre} with respect to some $e\in C^\perp$ of Hamming weight $m/(p-1)$ and some base $\Delta$ of $N(2)$. In particular, 
$C^\perp$ contains a codeword of Hamming weight $m/(p-1)$.
\end{proposition}

\subsection{Extra automorphisms of cyclic orbifolds of lattice VOAs}\label{S:extra}
In this subsection, we prove that the cyclic orbifold $V_L^{\hat{g}}$ has extra automorphisms if the even lattice $L$ can be  constructed by Construction B from a subgroup of $\bigoplus_{i=1}^t\Z_{k_i}$ as in \eqref{Eq:ConstB} and  $g=g_{\Delta,e}$ is a fixed-point free isometry as in \eqref{Eq:gre}.
Notice that $L$ is not necessarily rootless and that $|g|$ is not necessarily a prime throughout this subsection.

\subsubsection{Inner automorphisms of the simple Lie algebra of type $A_{k-1}$}\label{S:autoLie}
Let $k\in\Z_{\ge2}$ and let $\g$ be a simple Lie algebra of type $A_{k-1}$ over $\C$.
In this subsection, we discuss three inner automorphisms of $\g$ of finite order. 
The following is a standard model for $\g$:
$$\g=\{X\in M_{k}(\C)\mid \tr(X)=0\},$$
where the Lie bracket $[\ ,\ ]$ on $\g$ is given by $[X,Y]=XY-YX$.
Set$$F={\rm diag}(\omega,\omega^{2},\dots,\omega^{(k-1)},1)\in  M_{k}(\C),$$
where $\omega=\exp({2\pi\sqrt{-1}/k})$.
We also set  
\[ 
G=\begin{pmatrix}
0&1  &\cdots &0\\
\vdots&\ddots&\ddots & \vdots\\
0&  0&\ddots& 1\\
1 &0  & \cdots & 0
\end{pmatrix}, \quad 
Z=\frac{1}{\sqrt{k}}%%
\begin{pmatrix}
  \omega&\omega^{2}&\cdots & \omega^{(k-1)} &1\\
  \omega^{2}&\omega^{4}&\cdots &\omega^{2(k-1)} &1 \\
   \vdots&\ddots&\ddots&\vdots&\vdots\\
   \omega^{(k-1)}&\omega^{2(k-1)}&\ddots &\omega^{(k-1)^2} &1\\
   1&1&\cdots& 1&1
\end{pmatrix}\in  M_{k}(\C).
\]
Then $F^{k}=G^{k}=I$. 
Moreover $$Z^2=\begin{pmatrix}
0&0  &\cdots&0 &1&0\\
0&0&\cdots&1&0&0\\
\vdots&\vdots&\ddots &\vdots&\vdots& \vdots\\
0&  1&\cdots&0&0& 0\\
1&  0&\cdots&0&0& 0\\
0 &0  & \cdots&0&0 & 1
\end{pmatrix}$$
and $Z^2=I$ if $k=2$, and $Z^4=I$ if $k>2$.
Furthermore, we obtain 
\begin{equation}
Z^{-1}GZ=F,\quad G^{-1}FG=\omega^{-1} F,\quad Z^{-2}GZ^2=G^{-1}.
\label{Eq:SFG}
\end{equation}

Let us consider the following three inner automorphisms of $\g$: for $X\in\g$,
\begin{align*}
\varphi_{\g}:X\mapsto F^{-1}XF,\qquad \hat{g}_{\g}:X\mapsto G^{-1}XG,\qquad \zeta_{\g}:X\mapsto Z^{-1}XZ.
\end{align*}
By \eqref{Eq:SFG}, we have $\zeta_{\g}(G)=F$, $\zeta_{\g}(F)=G^{-1}$ and $\hat{g}_\g^{-1}\varphi_\g\hat{g}_\g=\varphi_\g$.
The following is also obtained by \eqref{Eq:SFG}:
\begin{lemma}\label{conjhes}
\begin{enumerate}[{\rm (1)}]
\item The orders of both $\varphi_{\g}$ and $\hat{g}_{\g}$ are $k$.
\item $\zeta_{\g}^{-1} \hat{g}_{\g}\zeta_{\g}=\varphi_{\g}$ and $\zeta^{-1}_{\g}\varphi_{\g}\zeta_{\g}=\hat{g}_{\g}^{-1}$ on $\g$. 
In particular, $\zeta_{\g}$ normalizes the subgroup generated by $\varphi_{\g}$ and $\hat{g}_{\g}$.
\item $\varphi_{\g}$ and $\hat{g}_{\g}$ are commutative on $\g$.
\end{enumerate}
\end{lemma}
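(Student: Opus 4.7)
The plan is to deduce all three items directly from the three commutation identities displayed just above the lemma, namely $Z^{-1}GZ=F$, $G^{-1}FG=\omega^{-1}F$, and $Z^{-2}GZ^2=G^{-1}$, together with the elementary fact that any $M\in GL_k(\C)$ whose conjugation action is trivial on $\g$ must be a scalar matrix. The latter is clear because $M_k(\C)=\g\oplus\C I$, so commuting with $\g$ forces commuting with all of $M_k(\C)$.

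For part (1), from $F^k=G^k=I$ one sees immediately that both $\varphi_\g$ and $\hat g_\g$ have order dividing $k$. To pin down the orders, I would argue that if $\varphi_\g^j=\id$ then $F^j$ is scalar; but the diagonal entries of $F^j$ are $\omega^j,\omega^{2j},\dots,\omega^{(k-1)j},1$, which can all agree only when $\omega^j=1$, i.e.\ $k\mid j$. For $\hat g_\g^j=\id$, the matrix $G^j$ must be a scalar permutation matrix, which forces $G^j=I$ and again $k\mid j$.

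For part (2), I would first translate the three matrix relations into how $\zeta_\g$ moves the ``generators'' $F$ and $G$. Directly, $\zeta_\g(G)=Z^{-1}GZ=F$, and then
$$
\zeta_\g(F)\;=\;Z^{-1}(Z^{-1}GZ)Z\;=\;Z^{-2}GZ^2\;=\;G^{-1}.
$$
Expanding the composition and regrouping gives
$$
\zeta_\g^{-1}\hat g_\g\zeta_\g(X)\;=\;Z\bigl(G^{-1}Z^{-1}XZG\bigr)Z^{-1}\;=\;(ZG^{-1}Z^{-1})\,X\,(ZGZ^{-1}),
$$
which is conjugation by $ZGZ^{-1}$; likewise $\zeta_\g^{-1}\varphi_\g\zeta_\g$ is conjugation by $ZFZ^{-1}$. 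Inserting the identities just derived for $\zeta_\g(G)$ and $\zeta_\g(F)$ (and their inverses) identifies these inner automorphisms with $\varphi_\g$ and $\hat g_\g^{-1}$, respectively. In particular both $\zeta_\g$-conjugates lie in $\langle\varphi_\g,\hat g_\g\rangle$, so $\zeta_\g$ normalizes this subgroup.

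For part (3), I would rewrite $G^{-1}FG=\omega^{-1}F$ as $FG=\omega^{-1}GF$, so that $GF$ and $FG$ differ only by the nonzero scalar $\omega$. Then
$$
\varphi_\g\hat g_\g(X)=(GF)^{-1}X(GF),\qquad \hat g_\g\varphi_\g(X)=(FG)^{-1}X(FG),
$$
and the scalar factor cancels in each conjugation, giving $\varphi_\g\hat g_\g=\hat g_\g\varphi_\g$. The main ``obstacle'' is really only careful bookkeeping: keeping straight the directions of left/right multiplication when inverting $\zeta_\g$, and invoking the three matrix identities in the right order. Once that is done, all three parts of the lemma fall out by mechanical expansion.
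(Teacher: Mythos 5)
Your overall route is the paper's: the paper proves Lemma \ref{conjhes} simply by citing \eqref{Eq:SFG} (together with the remark, stated just before the lemma, that $\zeta_{\g}(G)=F$ and $\zeta_{\g}(F)=G^{-1}$), and your parts (1) and (3) carry this out correctly. In (1), your observation that a matrix acting trivially on $\g$ by conjugation centralizes $M_k(\C)=\g\oplus\C I$ and is therefore scalar is a valid way to pin the orders down to exactly $k$ (the paper leaves this implicit; one could also just note that $\varphi_\g$ multiplies the root vector $E_{12}$ by $\omega$, and that $\hat g_\g$ restricts to $g_\Delta$ on $\h$). In (3), the scalar $\omega^{-1}$ in $FG=\omega^{-1}GF$ indeed cancels in conjugation, so that argument is complete.

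In part (2), however, your final identification contradicts your own displayed computation. You correctly expand (rightmost map applied first, as your formula shows) that $\zeta_\g^{-1}\hat g_\g\zeta_\g$ is conjugation by $ZGZ^{-1}$ and $\zeta_\g^{-1}\varphi_\g\zeta_\g$ is conjugation by $ZFZ^{-1}$. Now actually insert the identities: from $\zeta_\g(G)=F$ you get $ZFZ^{-1}=\zeta_\g^{-1}(F)=G$, and from $\zeta_\g(F)=G^{-1}$ you get $ZGZ^{-1}=\zeta_\g^{-1}(G)=F^{-1}$, \emph{not} $F$. Hence conjugation by $ZGZ^{-1}$ is $X\mapsto FXF^{-1}=\varphi_\g^{-1}(X)$, and conjugation by $ZFZ^{-1}$ is $X\mapsto G^{-1}XG=\hat g_\g(X)$. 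So under your (standard, right-to-left) composition convention the computation yields $\zeta_\g^{-1}\hat g_\g\zeta_\g=\varphi_\g^{-1}$ and $\zeta_\g^{-1}\varphi_\g\zeta_\g=\hat g_\g$, the inverses of what you claim; equivalently, the identities as printed in the lemma hold with $\zeta_\g$ and $\zeta_\g^{-1}$ interchanged, i.e.\ $\zeta_\g\hat g_\g\zeta_\g^{-1}=\varphi_\g$ and $\zeta_\g\varphi_\g\zeta_\g^{-1}=\hat g_\g^{-1}$. The repair is easy: conjugate in the other direction (or state explicitly the order-of-composition convention under which the printed formulas hold), and note that the ``in particular'' normalization claim, as well as the later uses in Section \ref{S:extra} (which only need that $\zeta$ interchanges $V^{\varphi}$ and $V^{\hat g}$, and the fixed points of $\hat g$ and $\hat g^{-1}$ coincide), are insensitive to this inversion. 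But as written, the sentence ``identifies these inner automorphisms with $\varphi_\g$ and $\hat g_\g^{-1}$'' is false for the maps you actually computed, so this step is asserted rather than proved and needs the correction above.
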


We now describe $\varphi_{\g}$ and $\hat{g}_{\g}$ on $\g$ explicitly.
Let $E_{ij}\in M_{k}(\C)$ denote the matrix with $1$ at the $(i,j)$-entry and $0$ otherwise.
Let $\h$ be the Cartan subalgebra of $\g$ consisting of all diagonal matrices with trace $0$, that is, $\h={\rm Span}_\C\{E_{ii}-E_{i+1,i+1}\mid 1\le i\le k-1\}$.
We identify $\mathfrak{h}$ with its dual $\mathfrak{h}^*$ via the normalized Killing form $(\cdot | \cdot )$ so that $(\alpha|\alpha)=2$ for any root $\alpha\in \mathfrak{h}^*$.
Take simple roots $\alpha_i$ $(1\le i\le k-1)$ so that $\alpha_i\in \Q_{>0}(E_{ii}-E_{i+1,i+1})$.
For $1\le i<j\le k$, the root space with respect to the root $\sum_{s=i}^{j-1}\alpha_s$ is $\C E_{ij}$.

Let $A$ be the root lattice of type $A_{k-1}$ spanned by the base $\Delta=\{\alpha_i\mid1\le i\le k-1\}$.
By the definition of the matrix $G$, we have $\hat{g}_{\g}(\alpha_{i})=\alpha_{i+1}$ if $1\le i\le k-2$ and $\hat{g}_{\g}(\alpha_{k-1})=-\sum_{i=1}^{k-1}\alpha_i$, the negated highest root.
Thus the restriction of $\hat{g}_\g$ on $\h$ is the isometry $g_{\Delta}$ as in \eqref{Eq:g}, that is, $\hat{g}_{\g}$ is a lift of $g_{\Delta}$ to $\g$.

Recall that $\varphi_{\g}=id$ on $\h$ and $\varphi_{\g}$ acts as $\omega^{j-i}$ on $\C E_{i,j}$.
Let $\rho_{\Delta}$ be the Weyl vector as in \eqref{Eq:Weylvec}.
We view $\rho_{\Delta}$ as an element of $\h$.
Then $\ad\; (\rho_{\Delta})$ acts as $0$ on $\h$ and as $(j-i)$ on $\C E_{i,j}$.
Hence we have 
\begin{equation}
\varphi_{\g}= 
\exp\left(\frac{1}{k}\left(2\pi \sqrt{-1} \ad~  (\rho_{\Delta})\right)\right).\label{Eq:varphi}
\end{equation}

\subsubsection{Extra automorphisms of cyclic orbifolds of lattice VOAs of type $A_{k-1}$}\label{S:ext}
Let $A$ be the root lattice of type $A_{k-1}$ and let $V_{A}$ be the associated lattice VOA.
Then $\g=(V_{A})_1$ is a simple Lie algebra of type $A_{k-1}$.

Recall that $V_{A}$ is generated by $\g$ and that $\Aut(V_A)=\Aut(\g)$.
We extend the inner automorphisms $\varphi_{\g}$, $\hat{g}_\g$ and $\zeta_\g$ of $\g$ in Section \ref{S:autoLie} to those of $V_{A}$, for which we use the same symbols.
By Lemma \ref{conjhes}, $\varphi_{\g}$ and $\hat{g}_{\g}$ are commuting automorphisms of order $k$ on $V_{A}$ and $\zeta_{\g}$ normalizes the subgroup $\langle \varphi_{\g},\hat{g}_{\g}\rangle$ of $\Aut(V_{A})$.

Let $\Delta$ be the base of $A$ as in Section \ref{S:autoLie} and let $A_0=\{\beta\in A\mid (\beta|\dfrac{{\rho_{\Delta}}}{k})\in \Z\}.$
It follows from $\rho_{\Delta}\in A^*$ and $(\rho_{\Delta}|\alpha_i)=1$ for $1\le i\le k-1$ that $A_0$ is an index $k$ sublattice of $A$.
By \eqref{Eq:varphi}, the fixed-point subVOA $V_{A}^{\varphi_{\g}}$ of $\varphi_\g$ is $V_{A_0}$.
By Lemma \ref{conjhes} (2), $\zeta_{\g}(V_{A}^{\hat{g}_{\g}})=V_{A}^{\varphi_{\g}}$ and $\zeta_{\g}(V_{A}^{\varphi_{\g}})=V_{A}^{\hat{g}_{\g}}$.
Hence $\zeta_{\g}$ is an automorphism of $V_{A}^{\langle \varphi_{\g},\hat{g}_{\g}\rangle}=V_{{A_0}}^{\hat{g}_{\g}}$ such that $V_{A_0}\circ\zeta_{\g}\cong V_{A}^{\hat{g}_{\g}}$ as $V_{A_0}^{\hat{g}_{\g}}$-modules.
By Proposition \ref{P:stabVL1}, $\zeta_\g$ is an extra automorphism of $V_{A_0}^{\hat{g}_\g}$.

\subsubsection{Extra automorphisms of cyclic orbifolds of lattice VOAs: general case}

Let $t\in\Z_{>0}$.
For $1\le i\le t$, let $k_i\in\Z_{\ge2}$ and let $R_i$ be the root lattice of type $A_{k_i-1}$.
Let $\Delta_i$ be a base of $R_i(2)$.
Set $R=\bigoplus_{i=1}^tR_{i}$.
Then $\Delta=\bigcup_{i=1}^t\Delta_i$ is a base of $R(2)$.
The vector $\chi_{\Delta}\in \Q\otimes_\Z R$ is defined as in \eqref{Eq:chi}. 

Let $C$ be a subgroup of $\mathcal{D}(R)\cong\bigoplus_{i=1}^t\Z_{k_i}$ such that $N=L_A(C)$ is even.
Set $L=L_B(C)$.
Let $e\in \bigoplus_{i=1}^t\Z_{k_i}^\times$ and let $g_{\Delta,e}\in O(N)$ be defined as in \eqref{Eq:gre}. 
Note that $e_i, 1 \leq i\leq t,$ are units of $\Z_{k_i}$, i.e., $\gcd(k_i,e_i)=1$ for all $1\le i\le t$. 
By Lemma \ref{L:fpf2}, $g_{\Delta,e}$ is fixed-point free and the order of $g_{\Delta,e}$  is equal to the least common multiple of $k_1,\dots,k_t$, say $n$. 

We now assume the following:
\begin{enumerate}[{\rm (i)}]
\item $|N:L|=n$, equivalently, $\chi_\Delta\in (1/n)N^*$ (see Lemma \ref{L:indexn});
\item $g_{\Delta,e}\in O(L)$, equivalently, $\lambda_e=(\lambda_{e_1}^1,\dots,\lambda_{e_t}^t)\in N^*$ (see Lemma \ref{L:gNc}).
\end{enumerate}
Set $\g_i=(V_{R_i})_1\subset (V_{N})_1$.
Let $\hat{g}_{\Delta,e}\in O(\hat{N})$ be a (standard) lift of $g_{\Delta,e}\in O(N)$.
By Lemma \ref{L:standardlift} (2), the order of $\hat{g}_{\Delta,e}$ is also $n$.
In addition, the restriction of $\hat{g}_{\Delta,e}$ to $V_R$ is also a standard lift of $g_{\Delta,e}\in O(R)$.
By Lemma \ref{L:standardlift} (1), we may assume that the restriction of $\hat{g}_{\Delta,e}$ to $V_{R_i}$ is $\hat{g}_{\g_i}$ up to conjugation.
Here the inner automorphism $\hat{g}_{\g_i}$ of $V_{R_i}$ is defined in Sections \ref{S:autoLie} and \ref{S:ext}. 

Let $\varphi_{\g_i}$ and $\zeta_{\g_i}$ be the inner automorphisms of $V_{R_i}$ defined in Sections \ref{S:autoLie} and \ref{S:ext}. 
Then $\varphi=\bigotimes_{i=1}^t\varphi_{\g_i}$ and $\zeta_0=\bigotimes_{i=1}^t\zeta_{\g_i}$ are inner automorphisms of $V_{{R}}\cong\bigotimes_{i=1}^tV_{R_i}$.
We extend them to inner automorphisms of $V_N$, for which we use the same symbols.
Note that 
\begin{equation*}\label{phi_c}
\varphi=\sigma_{-\chi_{\Delta}}=\exp(2\pi\sqrt{-1}({\chi}_\Delta)_{(0)})
\end{equation*}
by \eqref{Eq:varphi} and that $V_N^\varphi=V_L$.  
By Assumption (i),  the order of $\varphi$ is $n$ on $V_N$.
In addition, $\varphi$ and $\hat{g}_{\Delta,e}$ are still commutative on $V_{N}$; indeed, by \eqref{Eq:grhoc}, 
$$\hat{g}_{\Delta,e}\varphi\hat{g}_{\Delta,e}^{-1}\varphi^{-1}=\exp\left(2\pi\sqrt{-1}(\lambda_e)_{(0)}\right)$$
and it acts trivially on $V_{N}$ by Assumption (ii).

We now slightly modify these automorphisms by inner automorphisms so that the same relations also hold on $V_{N}$.
By Lemma \ref{conjhes} (2), we have $\zeta_0^{-1}\hat{g}_{\Delta,e}\zeta_0=\varphi$ and $\zeta_0^{-1}\varphi\zeta_0={\hat{g}}_{\Delta,e}^{-1}$ on $V_R$.
Since $V_{N}$ is an $(N/R)$-graded simple current extension of $V_R$, we have $$\{\tau\in\Aut(V_N)\mid \tau=id\ {\rm on}\ V_R\}=\{\exp(2\pi\sqrt{-1}x_{(0)})\mid x+N^*\in R^*/N^*\}.$$ 
Hence there exist $x,y\in R^*$ such that $$\zeta_0^{-1}\hat{g}_{\Delta,e}\zeta_0=\varphi\exp(2\pi\sqrt{-1}x_{(0)}),\quad \zeta_0^{-1}\varphi\zeta_0={\hat{g}}_{\Delta,e}^{-1}\exp(2\pi\sqrt{-1}y_{(0)})$$
on $V_{N}$.
Now, we set 
\begin{equation*}\label{hg_zeta}
\hat{g}=\exp(2\pi\sqrt{-1}(-y)_{(0)})\hat{g}_{\Delta,e},\quad {\zeta}=\exp\left(2\pi\sqrt{-1}\left(\sum_{i=1}^{n-1}\frac{i}{n}(g_{\Delta,e})^i(-x+y)\right)_{(0)}\right)\zeta_0.
\end{equation*}
Then $\hat{g}=\hat{g}_{\Delta,e}$ on $V_R$.
In addition, $\hat{g}$ and $\varphi$ are also commutative on $V_{N}$ since $\{\exp a_{(0)}\mid a\in\h\}$ is abelian.
Hence, by the similar calculation as in the proof of \cite[Lemma 4.5]{LS20}, we have 
\begin{equation}
\zeta^{-1}\varphi\zeta={\hat{g}}^{-1}\quad\text{and}\quad \zeta^{-1}\hat{g}\zeta=\varphi\quad \text{on}\quad V_N.\label{Eq:conjsigma}
\end{equation}

\begin{theorem} \label{thm:extra}
Let $t\in\Z_{>0}$.
Let $R_i$ be the root lattice of type $A_{k_i-1}$ for $1\le i\le t$ and set $R=\bigoplus_{i=1}^tR_i$.
Fix a base $\Delta$ of $R(2)$ and let $e\in\bigoplus_{i=1}^t\Z_{k_i}^\times$.
Let $C$ be a subgroup of $\mathcal{D}(R)$ such that $N=L_A(C)$ is even.
Set $L=L_B(C)$.
Assume that Assumptions (i) and (ii) hold.
Let $\hat{g}$ be a lift of the fixed-point free isomety $g_{\Delta,e}$.
Then there exists an automorphism $\zeta$ of $V_{L}^{\hat{g}}$ such that
$V_{L}\circ\zeta\cong V_{N}^{\hat{g}}$ as $V_{L}^{\hat{g}}$-modules.
In particular, $V_{L}^{\hat{g}}$ has an extra automorphism.
\end{theorem}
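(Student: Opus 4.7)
The plan is to exploit the automorphisms $\varphi=\sigma_{-\chi_{\Delta}}$, $\hat{g}$, and $\zeta$ of $V_N$ already constructed in the paragraphs immediately preceding the theorem. They satisfy $V_N^{\varphi}=V_L$, $\hat{g}$ is a standard lift of $g_{\Delta,e}$, and the two crucial relations
\[
\zeta^{-1}\varphi\zeta=\hat{g}^{-1},\qquad \zeta^{-1}\hat{g}\zeta=\varphi
\]
hold on all of $V_N$. I would first observe that these relations force $\zeta$ to normalize $\langle\varphi,\hat{g}\rangle\subset\Aut(V_N)$, so $\zeta$ preserves the common fixed-point subVOA
\[
V_N^{\langle\varphi,\hat{g}\rangle}=V_L\cap V_N^{\hat{g}}=V_L^{\hat{g}},
\]
and hence restricts to an element of $\Aut(V_L^{\hat{g}})$. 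Applying $\zeta^{-1}$ to $V_L=V_N^{\varphi}$ and using $\zeta^{-1}\varphi\zeta=\hat{g}^{-1}$ together with $V_N^{\hat{g}^{-1}}=V_N^{\hat{g}}$ gives the set-level equality $\zeta^{-1}(V_L)=V_N^{\hat{g}}$.

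Next I would verify that the restriction $\zeta^{-1}|_{V_L}\colon V_L\circ\zeta\to V_N^{\hat{g}}$ is an isomorphism of $V_L^{\hat{g}}$-modules. Since $\zeta\in\Aut(V_N)$, we have the intertwining relation $\zeta^{-1}Y_{V_N}(\zeta x,z)=Y_{V_N}(x,z)\zeta^{-1}$. For $a\in V_L^{\hat{g}}$ and $v\in V_L$ this gives
\[
\zeta^{-1}\bigl(Y_{V_L\circ\zeta}(a,z)v\bigr)
=\zeta^{-1}\bigl(Y_{V_L}(\zeta a,z)v\bigr)
=Y_{V_N}(a,z)\zeta^{-1}v
=Y_{V_N^{\hat{g}}}(a,z)\zeta^{-1}v,
\]
which is exactly the module homomorphism relation. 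This establishes $V_L\circ\zeta\cong V_N^{\hat{g}}$ as $V_L^{\hat{g}}$-modules.

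It remains to show that $\zeta$ really is an extra automorphism. The plan is: if $\zeta$ arose from some $\tilde\zeta\in N_{\Aut(V_L)}(\langle\hat{g}\rangle)$, the same computation with $\tilde\zeta$ in place of $\zeta$ would produce a $V_L^{\hat{g}}$-module isomorphism $V_L\to V_L\circ\zeta$, forcing $V_L\cong V_N^{\hat{g}}$. I would rule this out by comparing irreducible $V_L^{\hat{g}}$-constituents: $V_L=\bigoplus_{i=0}^{n-1}V_L(i)$ is the direct sum of the $n$ pairwise non-isomorphic $\hat{g}$-eigenspaces, each of which has underlying $V_L$-module $V_L$, whereas for $i\ne 0$ any occurrence of $V_L(i)$ as a constituent of $V_N^{\hat{g}}$ must lie in $V_L\cap V_N^{\hat{g}}=V_L^{\hat{g}}=V_L(0)$, which is impossible. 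The main technical point is that the two commutation relations hold on all of $V_N$, not merely on the sub-VOA $V_R$ where they come easily from Lemma \ref{conjhes}; this is exactly what the inner-automorphism corrections by $\exp(2\pi\sqrt{-1}(-y)_{(0)})$ and $\exp(2\pi\sqrt{-1}(\sum i(g_{\Delta,e})^i(-x+y)/n)_{(0)})$ in the paragraph preceding the theorem are designed to achieve, and once this is secured everything else reduces to formal manipulation of fixed-point subVOAs and conjugate modules.
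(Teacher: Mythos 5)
Your proposal is correct and follows essentially the same route as the paper: the paper likewise deduces from the relations \eqref{Eq:conjsigma} that $\zeta$ preserves $V_N^{\langle\hat{g},\varphi\rangle}=V_L^{\hat{g}}$ and maps $V_N^{\hat{g}}$ onto $V_L=V_N^{\varphi}$, concluding $V_L\circ\zeta\cong V_N^{\hat{g}}$ as $V_L^{\hat{g}}$-modules, and then obtains extra-ness from Proposition \ref{P:stabVL1}. The only difference is that you additionally argue $V_L\not\cong V_N^{\hat{g}}$ by hand (note that your step ``any occurrence of $V_L(i)$ as a constituent of $V_N^{\hat{g}}$ must lie in $V_L\cap V_N^{\hat{g}}$'' tacitly uses the multiplicity-one statement of quantum Galois theory for the faithful action of $\langle\varphi,\hat{g}\rangle\cong\Z_n\times\Z_n$ on the simple VOA $V_N$, under which the $n^2$ eigenspaces are pairwise inequivalent irreducible $V_L^{\hat{g}}$-modules), whereas the paper leaves this non-isomorphism implicit behind the citation of Proposition \ref{P:stabVL1}.
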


\begin{proof}
It follows from \eqref{Eq:conjsigma} that ${\zeta}$ preserves $V_N^{\langle\hat{g},\varphi\rangle}$.
Since $\hat{g}$ and $\varphi$ are commuting automorphisms, we have $V_N^{\langle\hat{g},\varphi\rangle}=V_{L}^{\hat{g}}$.
The equations \eqref{Eq:conjsigma} also show $${\zeta}(V_{N}^{\hat{g}})=V_{N}^{\varphi}=V_{L}\quad \text{and}\quad \zeta(V_L^{\hat{g}})=V_L^{\hat{g}},$$ which proves this theorem.
The latter assertion follows from Proposition \ref{P:stabVL1}.
\end{proof}

\begin{remark} If $k_1=\dots=k_t=2$, then $e=(1,1,\dots,1)$, and the Assumptions (i) and (ii) hold;
indeed, evenness of $L_A(C)$ implies that the Hamming weight of any codeword $C$ is a multiple of $4$ and $e\in C^\perp$.
In this case, an extra automorphism of $V_{L_B(C)}^{\tilde{g}}$ was constructed in \cite[Proposition 12.2.5]{FLM} (cf.\ \cite{Sh04}).
\end{remark}

At the end of this subsection, we will rewrite Theorem \ref{thm:extra} when $k_1=\dots=k_t=p$ is an odd prime.
Since $N$ is even, by Lemma \ref{P:so}, $C$ is a self-orthogonal code of length $t$ over $\Z_p$.
Note that $e\in\bigoplus_{i=1}^t\Z_{k_i}^\times$ if and only if the Hamming weight of $e$ is $t$.
In this case, Assumption (i)  always holds and Assumption (ii) holds if $e\in C^\perp$.
Therefore, we obtain the following:

\begin{corollary}\label{C:extra} 
Let $p$ be an odd prime and let $C$ be a self-orthogonal code of length $t$ over $\Z_p$.
Fix a base $\Delta$ of the root system of $L_A(\{\allzero\})\subset L_A(C)$ and let $e$ be a codeword in $C^\perp$ of Hamming weight $t$.
Let $\hat{g}$ be a lift of $g_{\Delta,e}\in O(L_B(C))$.
Then $V_{L_B(C)}^{\hat{g}}$ has extra automorphisms.
\end{corollary}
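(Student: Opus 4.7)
The plan is to derive this Corollary directly from Theorem \ref{thm:extra} by verifying that the specialization to $k_1=\dots=k_t=p$ (with $p$ an odd prime) and the given codeword $e \in C^{\perp}$ satisfies all the hypotheses of that theorem. In particular, the corollary really is just the ``odd prime'' reformulation that is announced in the paragraph preceding it, so the work reduces to a bookkeeping check.

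First I would note that the least common multiple $n$ of $k_1,\dots,k_t$ equals $p$, and that $e \in \bigoplus_{i=1}^{t}\Z_{p}^{\times}$ because $e$ has Hamming weight $t$, i.e., every component $e_i$ is a nonzero element of $\Z_p$ and hence a unit. By Lemma \ref{L:fpf2}, this guarantees that $g_{\Delta,e}$ is fixed-point free of order exactly $p$, so a standard lift $\hat{g}$ of $g_{\Delta,e}$ exists in $O(\hat{L_B(C)})$ and has order $p$ by Lemma \ref{L:standardlift}(2).

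Next I would verify Assumption (i) of Theorem \ref{thm:extra}, namely $\chi_\Delta \in (1/p)N^*$ with $N=L_A(C)$. Because $p$ is odd, each Weyl vector $\rho_{\Delta_i}$ lies in $R_i$ (as recalled after \eqref{Eq:inner2}), so $\chi_\Delta = \tfrac{1}{p}(\rho_{\Delta_1},\dots,\rho_{\Delta_t}) \in (1/p)R \subset (1/p)N \subset (1/p)N^*$; equivalently, by Lemma \ref{L:indexn}, $|L_A(C):L_B(C)| = p = n$, which is also already established in Section \ref{Sec:Zp}. For Assumption (ii), I need $\lambda_e \in N^*$, and by Lemma \ref{L:discA} we have $N^* = L_A(C)^* = L_A(C^\perp)$, so the condition $\lambda_e \in N^*$ is equivalent to $e \in C^\perp$, which is given; by Lemma \ref{L:gNc} this is exactly what guarantees $g_{\Delta,e} \in O(L_B(C))$.

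With both assumptions verified, Theorem \ref{thm:extra} applies verbatim, producing an automorphism $\zeta$ of $V_{L_B(C)}^{\hat{g}}$ with $V_{L_B(C)} \circ \zeta \cong V_{L_A(C)}^{\hat{g}}$ as $V_{L_B(C)}^{\hat{g}}$-modules, and hence an extra automorphism by Proposition \ref{P:stabVL1}. There is no genuine obstacle here; the only potential pitfalls are the two parity checks ($p$ odd to ensure $\rho_{\Delta_i}\in R_i$, and $p$ prime to conclude $\gcd(e_i,p)=1$ from ``$e_i\neq 0$''), both of which are built into the hypothesis that $p$ is an odd prime.
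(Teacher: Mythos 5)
Your proposal is correct and matches the paper's own derivation: the text preceding Corollary \ref{C:extra} obtains it exactly as you do, by specializing Theorem \ref{thm:extra} to $k_1=\dots=k_t=p$, noting that full Hamming weight of $e$ means $e\in\bigoplus_{i=1}^t\Z_p^\times$, that Assumption (i) always holds since $\rho_{\Delta_i}\in R_i$ for $p$ odd gives $\chi_\Delta\in(1/p)R$, and that Assumption (ii) is equivalent to $e\in C^\perp$ via Lemmas \ref{L:discA} and \ref{L:gNc}. Your bookkeeping (including the order-$p$ check via Lemmas \ref{L:fpf2} and \ref{L:standardlift}(2) and the appeal to Proposition \ref{P:stabVL1}) is accurate and adds nothing that deviates from the paper's route.
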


\section{Case (II): $V_L(1)\circ \tau$ is of twisted type for some $\tau\in  \Aut(V_L^{\hat{g}})$ }\label{S:7}

In this section, we classify even lattices $L$ and isometries $g$ of odd prime order  satisfying Case II in Section \ref{extraauto}.

\subsection{Necessary conditions on even lattices for Case (II)}\label{S:2}
Assume that  there exists $\tau\in  \Aut(V_L^{\hat{g}})$ such that 
\begin{equation}\label{Eq:as62}
V_L(1)\circ \tau\cong V_{L}^T[\hat{g}^s](j),
\end{equation}
for some irreducible $\hat{g}^s$-twisted module $V_{L}^T[\hat{g}^s]$,  $1\leq s\leq |g|-1$,  $0 \leq j\leq |g|-1$ (cf. Section \ref{S:tw}).  
Since $V_L(1)$ is a simple current module, so is $V_{L}^T[\hat{g}^s](j)$.
Hence, the quantum dimension ${\rm qdim}_{V_L} V_{L}^T[\hat{g}^s]$ is $1$ (see \cite[Theorem 2.1 (6)]{AbeLY}). By \cite[Corollary 3.7]{AbeLY}, we also have 
\begin{equation}\label{qdimVT}
({\rm qdim}_{V_L} V_{L}^T[\hat{g}^s] )^2 = p^{-m/(p-1)}|L^*/L|\cdot |L/{R}_L^{{g}^s}|= p^{-m/(p-1)} |L^*/{R}_L^{{g}^s}|,
\end{equation} 
where ${R}_L^{{g}^s}=((1-g^s)L^* )\cap L$.
Recall that $L^*/ (1-g^s)L^*\cong L/(1-g^s)L$ is an elementary abelian $p$-group of order $p^{m/(p-1)}$ (cf. \cite[Lemma A.1]{GL5A}). 
Thus, ${\rm qdim}_{V_L} V_{L}^T[\hat{g}^s]=1$ implies $$|L^*/R_L^{{g}^s}|=p^{m(p-1)}=|L^*/(1-g^s)L^*|.$$
Since $R_L^{{g}^s}\subset (1-g^s)L^*$, we have 
${R}_L^{{g}^s}=(1-g^s)L^*$, i.e., $(1-g^s)L^*\subset L$.
Hence 
\begin{equation}
\mathcal{D}(L)=L^*/L \cong (1-g^s)L^*/ (1-g^s)L \subset L/(1-g^s)L\cong \Z_p^{m(p-1)}.\label{Eq:elemab}
\end{equation}
Since $|g|$ is a prime, we also have $(1-g)L^*\subset L$.
Under the above conditions,  $\irr(V_L^{\hat{g}})$ and the fusion product on it are described in Section \ref{S:1-g}.
In particular, we use the notation $V_{\lambda+L}[\hat{g}^s], \lambda +L\in L^*/L,$ to denote an irreducible $\hat{g}^s$-twisted module as in \eqref{VlL}.

We now discuss possible values for $m$, $p$ and possible structures for $\mathcal{D}(L)$.
Recall from \eqref{Eq:esp} that the conformal weight of $ V_{\lambda+L}[\hat{g}^s]$ is  
\[
\varepsilon= \frac{m(p+1)}{24p}. 
\]

\begin{proposition}\label{DL} 
\begin{enumerate}[{\rm (1)}]
\item $\varepsilon\in\{1-1/p,1\}$.
\item If $\varepsilon=1-1/p$, then $m=\displaystyle\frac{24(p-1)}{p+1}$, $(1-g)L^*=L$ and $|\mathcal{D}(L)|=p^{m/(p-1)}$;
\item If $\varepsilon=1$, then $m=\displaystyle\frac{24p}{(p+1)}$ and $|\mathcal{D}(L)|=p^{m/(p-1)}\left(\displaystyle\frac{m}{p-1}\right)^{-2}$.
\end{enumerate}
\end{proposition}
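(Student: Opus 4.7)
First, I would parametrize $\varepsilon$ in terms of a single integer $k$. By Lemma~\ref{Lem:conj}(1), $\varepsilon(V_L(1)\circ\tau)=\varepsilon(V_L(1))=1$ via \eqref{Eq:cwun}, so by the hypothesis \eqref{Eq:assumptw} and Lemma~\ref{L:cwtw} we have $1=\varepsilon+k/p$ for some integer $0\le k\le p-1$, i.e., $\varepsilon=1-k/p$, and $m=24(p-k)/(p+1)$ follows from $\varepsilon=m(p+1)/(24p)$. Invoking Lemma~\ref{Lem:conjun} together with the equality $(1-g^s)^{-1}L^*=(1-g)^{-1}L^*$ (valid because $\gcd(s,p)=1$), I would replace $\tau$ by its composition with $\sigma_{(1-g^s)^{-1}\lambda}$ to reduce to $\lambda=0$. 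Thus $V_L(1)\circ\tau\cong V_L[\hat{g}^s](j_0)$ for some $j_0$, and the labelling convention $\varepsilon(V_L[\hat{g}^s](j))\equiv sj/p\pmod{\Z}$ combined with $\varepsilon(V_L[\hat{g}^s](j_0))=1$ forces $j_0=0$.

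Next, I would use dimension matching at weight~$1$ to rule out $k\ge 2$. Writing $V_L[\hat{g}^s]=M(1)[g^s]\otimes T$ and decomposing $T=\bigoplus_i T_i$ into $\hat{g}$-eigenspaces, the key fact is that each twisted-boson mode $a_{-j'/p}$ (with $a\in\h_{\omega^{-j'}}$) multiplies the $\hat{g}$-eigenvalue of any state by $\omega^{-j'}$; hence the weight-$(\varepsilon+k'/p)$ part of the $\omega^i$-eigenspace of $\hat{g}$ in $V_L[\hat{g}^s]$ equals $M(1)[g^s]_{k'/p}\otimes T_{i+k'}$. Setting $d:=m/(p-1)=\dim(V_L(1))_1$ and
$$N_{k'}:=\dim M(1)[g^s]_{k'/p}=[q^{k'/p}]\prod_{j'=1}^{p-1}(1-q^{j'/p})^{-d},$$
the identity $\dim(V_L[\hat{g}^s](0))_1=d$ yields $d=N_k\cdot\dim T_{i_0+k}$, where $i_0$ denotes the $\hat{g}$-eigenvalue index of $V_L[\hat{g}^s](0)$. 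Direct partition counting gives $N_0=1$, $N_1=d$, $N_2=d(d+3)/2$, and $N_k>d$ for every $k\ge 2$ (the single-part partition $(k)$ alone contributes $d$, and further partitions contribute strictly positive amounts). Since $\dim T_{i_0+k}\ge 1$, this forces $k\in\{0,1\}$, proving~(1); moreover $\dim T_{i_0}=d$ when $k=0$ and $\dim T_{i_0+1}=1$ when $k=1$.

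For parts~(2) and~(3), recall from just before the proposition that $|\mathcal{D}(L)|\cdot(\dim T)^2=|L^*/(1-g^s)L^*|=p^{m/(p-1)}$, so it remains to determine $\dim T$. Comparing the coefficient of $q^1$ in $\chi_{V_L[\hat{g}^s]}(q)=\dim T\cdot q^\varepsilon\prod_{j'=1}^{p-1}\prod_{n\ge 0}(1-q^{n+j'/p})^{-d}$ with $\sum_j\dim(V_L[\hat{g}^s](j))_1$ achieves this: in either case only $V_L[\hat{g}^s](0)$ contributes to the latter sum, because for every other~$j$ the conformal weight $\varepsilon(V_L[\hat{g}^s](j))$ together with its integer translates misses~$1$ (weights in an irreducible $V_L^{\hat{g}}$-module advance by integers). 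In case~(3) ($k=0$) this gives $\dim T=\dim T_{i_0}=d$, so $|\mathcal{D}(L)|=p^{m/(p-1)}/d^2=p^{m/(p-1)}(m/(p-1))^{-2}$; in case~(2) ($k=1$) this gives $d\dim T=d\dim T_{i_0+1}=d$, so $\dim T=1$ and $|\mathcal{D}(L)|=p^{m/(p-1)}$. The remaining claim in~(2) that $(1-g)L^*=L$ follows from $\dim T=1$, which gives $L=R_L^{g^s}=(1-g^s)L^*$, together with $(1-g)L^*=(1-g^s)L^*$ (from the factorizations $1-g^s=(1-g)\sum_{i=0}^{s-1}g^i$ and $1-g=(1-g^s)\sum_{i=0}^{t-1}g^{is}$ with $st\equiv 1\pmod{p}$). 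The main obstacle is justifying the tensor-product decomposition of the $\hat{g}$-eigenspaces of $V_L[\hat{g}^s]$ at fractional weights; once that is in place, the rest reduces to elementary partition counting and bookkeeping.
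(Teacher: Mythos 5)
Your proposal is correct and is essentially the paper's own proof: both parametrize $\varepsilon=1-k/p$ via Lemma \ref{L:cwtw} and \eqref{Eq:esp}, rule out $k\ge 2$ by the weight-one partition count (the paper's bound $\dim M_1\ge \tfrac{t(t+3)}{2}\dim T_{\lambda+L}>t$ is exactly your $N_k>d$), extract $\dim T_{\lambda+L}=1$ (resp.\ $=t$) from $\dim M_1=t$ when $k=1$ (resp.\ $k=0$), and conclude through \eqref{dimT} and \eqref{qdimVT}, including $(1-g)L^*=L$ from $\dim T=1$. Your additional steps---reducing to $\lambda=0$ via Lemma \ref{Lem:conjun0}, decomposing $T=\bigoplus_i T_i$ into $\hat{g}$-eigenspaces, and the $q^1$-coefficient comparison---only make explicit what the paper delegates to the explicit construction of $V_{\lambda+L}[\hat{g}^s]$ in \cite{DL,Lam19}, where $\hat{g}$ acts on $T_{\lambda+L}$ by a scalar (consistently with the labelling $\varepsilon(V_{\lambda+L}[\hat{g}^s](j))\equiv sj/p+(\lambda|\lambda)/2\pmod{\Z}$ having pairwise distinct residues), so your $T_i$ refinement collapses; the one slip, which is harmless since it leaves every dimension count unchanged, is that a mode $a_{-j'/p}$ shifts the $\hat{g}$-eigenvalue by $\omega^{-s'j'}$ with $ss'\equiv 1\pmod{p}$ (the mode index is governed by the $g^s$-eigenvalue, the eigenvalue shift by the $g$-eigenvalue), not by $\omega^{-j'}$.
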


\begin{proof}
We first note that $t=m/(p-1)\in\Z$. 
Set $M=V_{\lambda+L}[\hat{g}^s](j)$.
By Lemma \ref{Lem:conj} (1) and \eqref{Eq:as62}, we have $\varepsilon(M)=\varepsilon(V_L(1))=1$ and $\dim M_1=\dim(V_L(1))_1=t$.

By Lemma \ref{L:cwtw}, $1\in \{\varepsilon+i/p\mid 0\le i\le p-1\}$.
In addition, if $\varepsilon \leq 1- 2/p$, then by the explicit construction of $V_{\lambda+L}[\hat{g}^s]$ (cf.\ \cite{DL}), we have $\dim M_1\ge (t(t+3)/2)\dim T_{\lambda+L}>t$, which is a contradiction.
Hence $\varepsilon\in\{1-1/p,1\}$, and we obtain (1).

If $\varepsilon=1-1/p$, then by \eqref{Eq:esp} $m=\displaystyle\frac{24(p-1)}{p+1}$.
In addition, $\dim M_1=t\dim T_{\lambda+L}=t$, that is, $\dim T_{{\lambda}+L}=1$. By \eqref{dimT} and \eqref{qdimVT}, we have $(1-g)L^*=L$ and $|\mathcal{D}(L)|=p^{m/(p-1)}$. 
Hence we obtain (2).

If $\varepsilon=1$, then by  \eqref{Eq:esp}, $m=\displaystyle\frac{24p}{p+1}$.
In addition, $\dim M_1=\dim T_{\lambda+L}= t=m/(p-1)$ and $|\mathcal{D}(L)|= p^{m/(p-1)}\left(\displaystyle\frac{m}{p-1}\right)^{-2}$ by \eqref{dimT} and \eqref{qdimVT}.
Hence we obtain (3).
\end{proof}

Recall that $p$ is an odd prime and that $m/(p-1)\in\Z$.
By Proposition \ref{DL}, we have $m<24$.
Hence we obtain the following lemma. 
\begin{lemma}\label{possiblepair}
The possible pairs for $(p,m)$ are $(3,12)$, $(3,18)$, $(5,16)$, $(5,20)$, $(7,18)$, $(11,20)$ and $(23,22)$.
\end{lemma}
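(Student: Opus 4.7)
The plan is to read off the list directly from Proposition \ref{DL} together with the integrality constraint that both $m$ and $t := m/(p-1)$ must be positive integers. The divisibility $(p-1)\mid m$ comes from the $\Q[\langle g\rangle]$-module structure on $\Q\otimes_\Z L$: because $g$ acts fixed-point freely of prime order $p$, its minimal polynomial on $\Q\otimes_\Z L$ is the cyclotomic polynomial $\Phi_p$, which has degree $p-1$ and is irreducible over $\Q$. Hence $\Q\otimes_\Z L$ is a free module over the cyclotomic field $\Q(\zeta_p)$, forcing $(p-1)\mid m$.

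By Proposition \ref{DL} we are in one of two cases. In Case (2), $m=24(p-1)/(p+1)$, so $t = m/(p-1) = 24/(p+1)$, and integrality of $t$ is equivalent to $(p+1)\mid 24$. Since $p$ is an odd prime, $p+1$ is even and at least $4$, so $p+1\in\{4,6,8,12,24\}$, which gives $p\in\{3,5,7,11,23\}$ and correspondingly $m\in\{12,16,18,20,22\}$. In Case (3), $m=24p/(p+1)$, and
\[
t = \frac{m}{p-1} = \frac{24p}{(p+1)(p-1)} = \frac{24p}{p^2-1}.
\]
Since $\gcd(p,p^2-1)=1$, integrality of $t$ forces $(p^2-1)\mid 24$, which admits only $p=3$ (giving $p^2-1=8$, $m=18$) and $p=5$ (giving $p^2-1=24$, $m=20$); already $p=7$ yields $p^2-1=48\nmid 24$.

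Collecting the two cases produces exactly the seven pairs $(3,12),(3,18),(5,16),(5,20),(7,18),(11,20),(23,22)$ claimed. There is no genuine obstacle: the lemma is a finite arithmetic verification, and the only ingredient not already built into Proposition \ref{DL} is the observation $(p-1)\mid m$, which is elementary. One might alternatively bound $m<24$ directly from the formulas in Proposition \ref{DL} and then enumerate primes $p$ for which a multiple of $p-1$ lying below $24$ matches one of the two formulas, but the divisibility approach above is cleaner.
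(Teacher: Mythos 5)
Your proposal is correct and is essentially the paper's argument: the paper likewise derives the list from Proposition \ref{DL} together with the facts that $p$ is an odd prime, $m/(p-1)\in\Z$ (already noted in the proof of Proposition \ref{DL}, via $L/(1-g)L$ being elementary abelian of order $p^{m/(p-1)}$), and $m<24$, then enumerates. Your explicit divisibility conditions $(p+1)\mid 24$ and $(p^2-1)\mid 24$, and your cyclotomic justification of $(p-1)\mid m$, just spell out the same finite check in a slightly cleaner form.
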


\begin{remark}\label{R:Sh04b} The case $p=2$ is discussed in \cite[Proposition 3.14]{Sh04}; the possible values for $m$ in Case (II) are $8$ and $16$.
In addition, $L$ is isometric to $\sqrt{2}E_8$ and the Barnes-Wall lattice of rank $16$, respectively.
\end{remark}

By \eqref{Eq:elemab}, $\mathcal{D}(L)$ is an elementary abelian $p$-group.
For each possible pair $(p,m)$ in Lemma \ref{possiblepair}, $\mathcal{D}(L)$ is determined by Proposition \ref{DL}, which is summarized in Table \ref{parameter}.
\begin{longtable}[c]{|c|c|c|c||c|c|c|} 
\caption{Possible parameters for $L$} \label{parameter}\\
\hline 
$p$  & $m$  & $\mathcal{D}(L)$ &$[t,\dim C]$& $p$  & $m$  & $\mathcal{D}(L)$ \\ \hline \hline 
$3$&$12$&$\Z_3^6$&$[6,1]$& 
$11$&$20$&$\Z_{11}^2$\\ \hline 
$3$&$18$&$\Z_3^5$&$[9,3]$&
$23$&$22$&$\Z_{23}$ \\ \hline 
$5$&$16$&$\Z_5^4$&$[4,1]$&&& \\ \hline 
$5$&$20$&$\Z_5^3$&$[5,2]$&&&\\ \hline 
$7$&$18$&$\Z_7^3$&$[3,1]$ &&& \\ \hline 
\end{longtable}

We now view $\mathcal{D}(L)$ as a vector space over $\F_p=\Z_p(=\Z/p\Z)$. 
Recall that \eqref{Eq:as1} and \eqref{Eq:assumptw} hold.
By Corollary \ref{C:confwt}, for any $\lambda+L\in\mathcal{D}(L)$, we have $\varepsilon(V_{\lambda+L}(0))\in (1/p)\Z$.
Hence $(\lambda|\lambda)/2\in (1/p)\Z$.
Therefore $(\mathcal{D}(L), q)$ forms a non-singular quadratic space over $\F_p$, where the quadratic form $q:\mathcal{D}(L)\to\F_p$ is defined by $$q(\lambda+L) = \frac{p}{2}(\lambda| \lambda) \pmod{p}.$$

\subsubsection{Case: $p=3,5,7$}
Assume that $p=3,5$ or $7$.
By Table \ref{parameter}, we have $\dim \mathcal{D}(L)\ge3$ as a vector space over $\F_p$.
Hence $(\mathcal{D}(L),q)$ contains a non-zero singular vector; it means that there is a $\lambda+L\in\mathcal{D}(L)\setminus\{L\}$ such that $(\lambda|\lambda)\in2\Z$.
By Propositions \ref{LC} and \ref{P:(I)}, there exists a self-orthogonal code $C$ of length $t=m/(p-1)$ over $\Z_p$ such that $L\cong L_B(C)$.

By Lemma \ref{L:discB}, $|\mathcal{D}(L)|=p^2|C^\perp/C|$.
Since $\dim C^\perp+\dim C=t$, we have $|C^\perp/C|=p^{t-2\dim C}$, which determines $\dim C$ (see Table \ref{parameter}).
Since $L$ is rootless, the code $C$ is unique up to equivalence by Lemma \ref{L:uniqueC}.
By Table \ref{T:example}, we obtain the following proposition.

\begin{proposition}\label{P:357} If $p=3,5$ or $7$, then $L\cong L_B(C)$ for some code $C$ given in Lemma \ref{L:uniqueC}.
In particular, if $(p,m)=(3,12)$, $(3,18)$, $(5,16)$, $(5,20)$ or $(7,18)$, then $L$ is isometric to the coinvariant lattice of the Leech lattice $\Lambda$ associated with the conjugacy class $3B$, $3C$, $5B$, $5C$ or $7B$, respectively.
\end{proposition}

\subsubsection{Case: $p=11$}
By Table \ref{parameter} and Proposition \ref{DL}, we have $m={\rm rank}(L)=20$, $\mathcal{D}(L)\cong \F_{11}^2$ and $(1-g)L^*=L$.

\begin{lemma}\label{nosingular}
There are no non-zero singular vectors in $(\mathcal{D}(L),q)$, that is, $(\lambda|\lambda)\notin2\Z$  for any $\lambda+L\in\mathcal{D}(L)\setminus\{L\}$.
In particular, $(\mathcal{D}(L),q)$ is a non-singular $2$-dimensional quadratic space of $(-)$-type.  
\end{lemma}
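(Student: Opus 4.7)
The plan is to argue by contradiction: assume there exists a non-zero singular vector $\lambda+L\in\mathcal{D}(L)$, i.e., $(\lambda|\lambda)\in 2\Z$ with $\lambda\notin L$. Since we are in Case (II), the assumptions \eqref{Eq:as1} and \eqref{Eq:assumptw} are in force, so Proposition \ref{LC} applies and yields $i\in\{0,1\}$ and $1\le j\le p-1$ such that $V_{\lambda+L}(i)\cong V_L(j)\circ\sigma$ for some $\sigma\in\Aut(V_L^{\hat g})$. This is precisely the hypothesis of Proposition \ref{P:(I)}, and therefore $L\cong L_B(C)$ for some self-orthogonal code $C$ over $\Z_{11}$ of length $t=m/(p-1)=20/10=2$ whose dual $C^\perp$ contains a codeword of Hamming weight $2$.

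Next I would pin down $\dim_{\F_{11}} C$. By Lemma \ref{L:discB}, $|\mathcal{D}(L_B(C))|=p^2|C^\perp/C|=11^{2+(t-2\dim C)}$. Comparing with $|\mathcal{D}(L)|=11^2$ from Table \ref{parameter} forces $t-2\dim C=0$, so $\dim C=1$ and $C=\langle(a,b)\rangle_{\Z_{11}}$ for some $(a,b)\neq(0,0)$.

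The contradiction will then come from a quick number-theoretic obstruction: self-orthogonality of $C$ means $a^2+b^2\equiv 0\pmod{11}$. Since $11\equiv 3\pmod 4$, the element $-1$ is a non-residue modulo $11$, so $a^2+b^2\equiv 0\pmod{11}$ forces $a\equiv b\equiv 0\pmod{11}$, contradicting $(a,b)\neq(0,0)$. Thus no non-zero singular vector can exist in $(\mathcal{D}(L),q)$.

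For the final assertion, recall that a non-degenerate $2$-dimensional quadratic space over a finite field $\F_p$ is either hyperbolic (of $(+)$-type, in which case it contains non-zero isotropic vectors) or anisotropic (of $(-)$-type). Having ruled out isotropic vectors, $(\mathcal{D}(L),q)$ must be of $(-)$-type. The main obstacle is really just ensuring that Proposition \ref{P:(I)} can indeed be invoked from the hypothesis supplied by Proposition \ref{LC}; once the reduction to a self-orthogonal code of length $2$ over $\Z_{11}$ is achieved, the rest is a one-line application of quadratic reciprocity at $p=11$.
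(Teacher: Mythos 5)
Your proof is correct, but it takes a genuinely different route from the paper's. The paper dispatches the lemma in two lines of pure lattice theory: if $(\lambda|\lambda)\in2\Z$ for some $\lambda+L\in\mathcal{D}(L)\setminus\{L\}$, then, since $|\mathcal{D}(L)|=11^2$ and $\lambda+L$ has order $11$, the overlattice ${\rm Span}_\Z\{\lambda,L\}$ is even unimodular of rank $20$, which is impossible because even unimodular lattices exist only in ranks divisible by $8$. You instead run, in reverse, the very pipeline the paper uses for $p=3,5,7$ in Proposition \ref{P:357}: Proposition \ref{LC} (legitimately invoked, since in Case (II) with $(p,m)=(11,20)$ the assumptions \eqref{Eq:as1} and \eqref{Eq:assumptw} are in force, indeed $(1-g)L^*=L$ by Proposition \ref{DL}(2)) feeds exactly the hypothesis $V_L(j)\circ\sigma\cong V_{\lambda+L}(i)$ into Proposition \ref{P:(I)}, and your bookkeeping is sound: the singularity condition $(\lambda|\lambda)\in2\Z$ is independent of the coset representative, $(1-g)\lambda\in L$ holds automatically, Lemma \ref{L:discB} together with $|\mathcal{D}(L)|=11^2$ forces $\dim C=1$, and self-orthogonality of a length-$2$ code over $\Z_{11}$ forces $C=\{0\}$ because $-1$ is a quadratic non-residue modulo $11$ (note this is the first supplement to quadratic reciprocity, not reciprocity itself, a harmless misnomer). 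The paper's proof buys brevity and independence from the VOA machinery, and in particular keeps the lemma usable as an input to the later structure theory rather than an output of it; your proof buys uniformity with the $p=3,5,7$ analysis, at the cost of invoking Propositions \ref{LC} and \ref{P:(I)}, whose proofs are far heavier than the statement being proved. It is worth noticing that the two arguments secretly rest on the same obstruction: a self-orthogonal $[2,1]$ code $C$ over $\Z_{11}$ would satisfy $C=C^\perp$, making $L_A(C)$ an even unimodular lattice of rank $20$ --- precisely the object the paper's one-liner rules out. Your closing step for the ``in particular'' clause is also fine: in odd characteristic anisotropy implies non-singularity, and an anisotropic binary quadratic space over $\F_{11}$ is of $(-)$-type.
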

\begin{proof}
Suppose that there exists $\lambda+L\in \mathcal{D}(L)\setminus\{L\}$ such that $(\lambda|\lambda)\in2\Z$. 
Then ${\rm Span}_\Z\{\lambda,L\}$ is even unimodular of rank $20$, which is impossible. 
\end{proof}

\begin{lemma}\label{L:11} Let $U$ be an even unimodular lattice of rank $24$.
If $U$ has an isometry $f$ of order $11$ such that both $U^f$ and $U_f$ are rootless, then $U$ is isometric to the Leech lattice.
\end{lemma}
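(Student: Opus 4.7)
The plan is to invoke the classification of even unimodular lattices of rank $24$: the Niemeier lattices form a finite list, and the Leech lattice is the unique rootless one. It therefore suffices to show that $U$ itself is rootless, and I will argue by contradiction, assuming $U$ contains a root $\alpha$.

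The first step is to analyze the integers $a_i=(\alpha\mid f^i(\alpha))$ for $1\le i\le 10$. Cauchy--Schwarz gives $|a_i|\le 2$, and $a_i=a_{11-i}$ holds because the form is $f$-invariant. I would rule out $a_i=\pm 2$ using the hypothesis on $U^f$: if $a_i=2$ then $f^i(\alpha)=\alpha$, and since $11$ is prime with $\gcd(i,11)=1$ the $f$-orbit of $\alpha$ has size $1$, forcing $\alpha\in U^f$ and contradicting $U^f(2)=\emptyset$; if $a_i=-2$ then $f^{2i}(\alpha)=\alpha$ and the same argument applies. Next, for every $1\le i\le 10$ the vector $\alpha-f^i(\alpha)$ lies in $U_f$, since $f$-invariance of the form gives $(\alpha-f^i(\alpha)\mid v)=(\alpha\mid v)-(\alpha\mid f^{-i}(v))=0$ for all $v\in U^f$. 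As this vector has norm $4-2a_i$, the value $a_i=1$ would yield a root in $U_f$, contradicting $U_f(2)=\emptyset$. Consequently $a_i\in\{-1,0\}$ for every $1\le i\le 10$.

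For the final step I decompose $\alpha=\alpha^f+\alpha_f$ orthogonally with $\alpha^f\in (U^f)\otimes\Q$ and $\alpha_f\in (U_f)\otimes\Q$. Averaging is projection onto the fixed subspace, so $\sum_{i=0}^{10}f^i(\alpha)=11\alpha^f$; pairing with $\alpha$ and using $a_i=a_{11-i}$ yields
\[
11(\alpha^f\mid\alpha^f)=(11\alpha^f\mid\alpha)=2+2\sum_{i=1}^{5}a_i.
\]
Since each $a_i\in\{-1,0\}$ the right-hand side lies in $\{-8,-6,-4,-2,0,2\}$, and the only non-negative multiple of $11$ in this range is $0$. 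Hence $\alpha^f=0$, so $\alpha\in U\cap((U_f)\otimes\Q)=U_f$ is a root, contradicting the hypothesis $U_f(2)=\emptyset$. Thus $U$ is rootless and must be the Leech lattice.

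The decisive obstacle is the final range argument, which is exactly where the prime $p=11$ enters: for smaller primes the analogous set meets $p\Z\setminus\{0\}$ in non-zero points, so additional input would be required. Here the prime $11$ is large enough relative to the norm-$2$ hypothesis to leave only the trivial option $\alpha^f=0$.
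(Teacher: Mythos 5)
Your steps establishing $a_i\in\{-1,0\}$ are correct, as is the conclusion in the case $2+2\sum_{i=1}^{5}a_i=0$ (there $\alpha^f=0$ forces $\alpha\in U_f$, exactly the orbit analysis the paper itself runs in the proof of Theorem \ref{T:ChaB}). The gap is in your very last inference: you discard the value $11(\alpha^f\mid\alpha^f)=2$ on the grounds that this quantity must be a non-negative \emph{multiple of} $11$, but nothing forces $(\alpha^f\mid\alpha^f)$ to be an integer. Writing $\beta=\sum_{i=0}^{10}f^i(\alpha)\in U^f$, one has $\alpha^f=\tfrac{1}{11}\beta$, so $(\alpha^f\mid\alpha^f)\in\tfrac{2}{121}\Z_{\ge0}$, and the value $(\alpha^f\mid\alpha^f)=2/11$, i.e.\ $(\beta\mid\beta)=22$, is perfectly consistent with integrality and evenness of $U^f$. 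This undismissed case is precisely $a_i=0$ for all $1\le i\le 10$, i.e.\ the $f$-orbit of $\alpha$ consisting of $11$ pairwise orthogonal roots, and it genuinely occurs: in the Niemeier lattice with root system $A_1^{24}$, an order-$11$ isometry coming from an $M_{24}$-permutation of cycle type $1^2\,11^2$ has exactly such orbits. In that example the hypotheses fail for a reason your single-orbit computation cannot see (the $48\equiv 4\pmod{11}$ roots force $f$-fixed roots, so $U^f$ is not rootless), which shows the all-orthogonal case cannot be excluded by any local argument about one orbit; global information about the full root system is needed.

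For comparison, the paper's proof is purely classification-based: since $11$ divides $|O(U)|$, the Niemeier root system of $U$ must be one of $\emptyset$, $A_1^{24}$, $A_2^{12}$, $A_{11}D_7E_6$, $A_{12}^2$, $A_{15}D_9$, $A_{17}E_7$, $D_{12}^2$, $A_{24}$, $D_{16}E_8$, $D_{24}$, and in each non-empty case one checks that $U^f$ or $U_f$ contains roots. If you want to keep your elementary framework, the missing case can be closed by a counting argument rather than your divisibility claim: the number of roots of $U$ is $24h$ ($h$ the Coxeter number), and modulo $11$ this counts the $f$-fixed roots, which lie in $U^f$; this forces roots in $U^f$ unless $11\mid h$, leaving essentially only $D_{12}^2$ (with $h=22$) to treat by hand, where one finds roots of the form $e_i-e_j$ inside $U_f$. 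But note this repair again leans on the classification of Niemeier root systems, so it does not really make the proof more elementary than the paper's.
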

\begin{proof} By the orders of the automorphism groups of even unimodular lattices of rank $24$ (cf.\ \cite[Chapter 16]{CS}), the root system of $U(2)$ is one of $\emptyset$, $A_1^{24}$, $A_2^{12}$, $A_{11}D_7E_6$, $A_{12}^2$, $A_{15}D_9$, $A_{17}E_7$, $D_{12}^2$, $A_{24}$, $D_{16}E_8$ and $D_{24}$.
If $U(2)\neq\emptyset$, namely, $U\not\cong\Lambda$, then one can easily verify that $U^f$ or $U_f$ contains roots, which contradicts the assumption.
\end{proof}

\begin{proposition}\label{P:11A}
If $p=11$, then $L$ is isometric to the coinvariant lattice of the Leech lattice $\Lambda$ associated with the conjugacy class $11A$ of $O(\Lambda)$. 
\end{proposition}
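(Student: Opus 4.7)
The plan is to realize $L$ as the coinvariant lattice of an order-$11$ isometry of some even unimodular lattice of rank $24$, and then invoke Lemma \ref{L:11} to identify this overlattice with the Leech lattice $\Lambda$.

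First, I would construct a complementary lattice $K$: a rootless positive-definite even lattice of rank $4$ whose discriminant form $(\mathcal{D}(K), q_K)$ is anti-isometric to $(\mathcal{D}(L), q_L)$. By Lemma \ref{nosingular}, $(\mathcal{D}(L), q_L)$ is a non-singular $2$-dimensional quadratic space of $(-)$-type over $\F_{11}$, so we need $(\mathcal{D}(K), q_K)$ to be of $(+)$-type. Existence of such a $K$ follows from Nikulin's existence results for even lattices with prescribed genus invariants, or by exhibiting an explicit Gram matrix; a concrete model is the fixed-point sublattice of the Leech lattice for an element of class $11A$.

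Given $K$, fix an isomorphism of quadratic spaces $\varphi\colon (\mathcal{D}(L), q_L) \simto (\mathcal{D}(K), -q_K)$. Its graph $I \subset \mathcal{D}(L) \oplus \mathcal{D}(K)$ is a totally isotropic subgroup of order $11^2$, and the corresponding overlattice $U$ of $L \perp K$ is even unimodular of rank $24$. Then I extend $g$ to an isometry $f$ of $L \perp K$ by $f|_L = g$ and $f|_K = \mathrm{id}$. Since Proposition \ref{DL}(2) gives $(1-g)L^* = L$, the isometry $g$ acts trivially on $\mathcal{D}(L)$; consequently $f$ acts trivially on $\mathcal{D}(L) \oplus \mathcal{D}(K)$, preserves $I$, and induces an isometry of $U$, again denoted $f$.

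The remaining step is to verify that $U^f = K$ and $U_f = L$. For $v = \alpha + \beta \in U$ with $\alpha \in L^*$ and $\beta \in K^*$, the equation $f(v) = v$ forces $g\alpha = \alpha$, hence $\alpha = 0$ since $g$ is fixed-point free on $L^*$; the glue condition $(\alpha + L, \beta + K) \in I$ then forces $\beta \in K$. Thus $U^f = K$ and, by taking orthogonal complements in $U$, $U_f = L$. Both sublattices are rootless, so Lemma \ref{L:11} yields $U \cong \Lambda$, whence $L \cong \Lambda_f$. Since $\rank \Lambda_f = 20$, the characteristic polynomial of $f$ on $\Q \otimes_\Z \Lambda$ is $(x-1)^4 \Phi_{11}(x)^2$, and this Frame shape characterizes the conjugacy class $11A$ of $O(\Lambda)$; therefore $L \cong \Lambda_{11A}$. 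The main obstacle in carrying out this plan is producing the complementary lattice $K$ with the required discriminant form and rootlessness, which requires either a non-trivial appeal to the existence theory of lattices with prescribed genus or an explicit construction.
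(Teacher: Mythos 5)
Your proposal is essentially the paper's own proof: the paper likewise complements $L$ by the rank-$4$ lattice $K=\Lambda^h$ for $h$ in class $11A$ (citing \cite{HL90} for its rank and discriminant form), glues $L\perp\Lambda^h$ to an even unimodular lattice $U$ of rank $24$, extends $g$ by the identity on $\Lambda^h$ using $(1-g)L^*=L$ from Proposition \ref{DL}(2), identifies $U\cong\Lambda$ via Lemma \ref{L:11}, and concludes from the fact that $O(\Lambda)$ has a single conjugacy class of elements of order $11$ (your Frame-shape remark is an equivalent formulation of this last step). Your explicit verifications that $U^f=K$ and $U_f=L$, and that the graph of an anti-isometry yields an even unimodular overlattice, correctly fill in what the paper asserts without detail.

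One concrete slip, though it does not derail the argument: you claim that $(\mathcal{D}(K),q_K)\cong(\mathcal{D}(L),-q_L)$ must be of $(+)$-type. Negating a quadratic form preserves anisotropy, so $-q_L$ is again the unique anisotropic, i.e.\ $(-)$-type, plane over $\F_{11}$; and indeed $(\mathcal{D}(\Lambda^h),q)$ is of $(-)$-type, as recorded from \cite{HL90} in the paper's proof. Had $q_K$ genuinely been hyperbolic, your construction would fail at two points: a $(+)$-type plane admits no anti-isometry with the anisotropic $q_L$, and the sum $(\mathcal{D}(L),q)\oplus(\mathcal{D}(K),q_K)$ would then be a $4$-dimensional space of $(-)$-type, whose maximal totally isotropic subgroups have order only $11$, too small to glue to a unimodular overlattice. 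The fact the paper uses (implicitly, via the $(+)$-type of the $4$-dimensional sum) is that the orthogonal sum of two $(-)$-type planes is the $(+)$-type $4$-space, which contains Lagrangians of order $11^2$; any such Lagrangian meets both summands trivially, since each summand is anisotropic, and is therefore the graph of an anti-isometry --- exactly your $I$. With the concrete model $K=\Lambda^h$ (rootless, being a sublattice of $\Lambda$), the required anti-isometry exists because both $q_L$ and $-q_{\Lambda^h}$ are the unique anisotropic plane over $\F_{11}$, and the rest of your argument is sound.
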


\begin{proof}
Let $h\in O(\Lambda)$ be an element in the conjugacy class $11A$. Then $\Lambda^h$ has rank $4$ and $(\mathcal{D}(\Lambda^h),q)$ is a $2$-dimensional non-singular space over $\F_{11}$ of $(-)$-type (\cite{HL90}). 
Now consider $X= L \perp \Lambda^h$. Then $\rank X=24$.
By Lemma \ref{nosingular}, $(\mathcal{D}(X),q)\cong (\mathcal{D}(L),q)\oplus (\mathcal{D}(\Lambda^h),q)$ is a $4$-dimensional non-singular quadratic space over $\F_{11}$ of $(+)$-type. 
Hence, there exists an even unimodular lattice $U\supset L\perp \Lambda^h$. 
It follows from $(1-g)L^*=L$ that $g$ can be extended to an isometry of $U$ by acting trivially on $\Lambda^h$. 
Note that $\Lambda^h=U^g$ and $L=U_g$, which shows that $U$ has an isometry $g$ of order $11$ such that both $U^g$ and $U_g$ are rootless. 
By Lemma \ref{L:11}, we have $U\cong \Lambda$.
Since $O(\Lambda)$ has only one conjugacy class of order $11$, $g$ is conjugate to $h$, and $L\cong \Lambda_h$. 
\end{proof}

\subsubsection{Case: $p=23$}
By Table \ref{parameter} and Proposition \ref{DL}, we have $m={\rm rank}(L)=22$, $\mathcal{D}(L)\cong \F_{23}$ and $(1-g)L^*=L$.

\begin{lemma}\label{L:-123}
There exists $\lambda+L\in \mathcal{D}(L)$ such that $q(\lambda+L)=-1$.   
\end{lemma}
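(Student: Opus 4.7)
The plan is to reduce the lemma to a Gauss-sum identity (Milgram's formula) applied to the finite quadratic form $(\mathcal{D}(L),q)$. Since $|\mathcal{D}(L)| = 23$, the discriminant group is a one-dimensional $\mathbb{F}_{23}$-vector space, so for any generator $\mu+L$ of $\mathcal{D}(L)$ we have $q(a\mu+L)=a^{2}q(\mu+L)$ for every $a\in\mathbb{F}_{23}^{\times}$. Hence the image $q(\mathcal{D}(L)\setminus\{L\})$ is either exactly the set of non-zero squares or exactly the set of non-zero non-squares in $\mathbb{F}_{23}$. Because $23\equiv 3\pmod 4$, the element $-1$ is a non-square modulo $23$, so the lemma is equivalent to showing that $q$ takes non-square values.

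To decide which of the two cases occurs, I would apply Milgram's formula to the even lattice $L$ of rank $m=22$:
\[
\sum_{x\in\mathcal{D}(L)}e^{2\pi\sqrt{-1}\,(x|x)/2}=\sqrt{|\mathcal{D}(L)|}\;e^{2\pi\sqrt{-1}\,\mathrm{rank}(L)/8}=\sqrt{23}\,e^{2\pi\sqrt{-1}\cdot 22/8}=-\sqrt{-1}\,\sqrt{23}.
\]
On the other hand, via the identification $e^{2\pi\sqrt{-1}(x|x)/2}=e^{2\pi\sqrt{-1}\,q(x)/23}$ and the substitution $x=a\mu+L$, the sum becomes
\[
1+2\sum_{a=1}^{11}e^{2\pi\sqrt{-1}\,a^{2}q_{0}/23},\qquad q_{0}:=q(\mu+L).
\]
Using the classical quadratic Gauss sum $\sum_{a\in\mathbb{F}_{23}}\left(\tfrac{a}{23}\right)e^{2\pi\sqrt{-1}\,a/23}=\sqrt{-1}\sqrt{23}$ (since $23\equiv 3\pmod 4$) and the standard identity $\sum_{s\in S}\zeta^{s}+\sum_{n\in N}\zeta^{n}=-1$ where $\zeta=e^{2\pi\sqrt{-1}/23}$, one gets $\sum_{s\in S}\zeta^{s}=(-1+\sqrt{-1}\sqrt{23})/2$ and $\sum_{n\in N}\zeta^{n}=(-1-\sqrt{-1}\sqrt{23})/2$. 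Consequently the Milgram sum equals $+\sqrt{-1}\sqrt{23}$ if $q_{0}$ is a square and $-\sqrt{-1}\sqrt{23}$ if $q_{0}$ is a non-square.

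Comparing with the Milgram value $-\sqrt{-1}\sqrt{23}$ forces $q_{0}$ to be a non-square, so the image of $q$ on $\mathcal{D}(L)\setminus\{L\}$ is exactly the set of non-squares in $\mathbb{F}_{23}^{\times}$. Since $-1$ is a non-square modulo $23$, there exists $\lambda+L\in\mathcal{D}(L)$ with $q(\lambda+L)=-1$, completing the proof. The only potential obstacle is pinning down the correct sign conventions in Milgram's formula and the Gauss sum; both are standard, so this is a bookkeeping step rather than a genuine difficulty.
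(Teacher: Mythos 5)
Your proof is correct, but it takes a genuinely different route from the paper. The paper argues by contradiction with a gluing construction: if $q$ never took the value $-1$, then (since $-1$ is a non-square mod $23$ and the nonzero values of $q$ form a single square class) some $\lambda+L$ would satisfy $q(\lambda+L)=1$; gluing $L$ to the root lattice $L'$ of type $A_{22}$, whose discriminant form takes the value $-1$ at $\lambda_5+L'$ by \eqref{Eq:innerij}, along the isotropic glue vector $(\lambda,\lambda_5)$ then produces an even unimodular lattice of rank $44$, which is impossible because even unimodular lattices have rank divisible by $8$. Your Milgram/Gauss-sum computation is the quantitative refinement of that very obstruction (Milgram's formula is essentially equivalent to the signature-divisibility-by-$8$ theorem), so both arguments exploit the same underlying fact, but you apply it analytically rather than through an auxiliary lattice: comparing $\sqrt{23}\,e^{2\pi\sqrt{-1}\cdot 22/8}=-\sqrt{-1}\sqrt{23}$ with $\left(\frac{q_0}{23}\right)\sqrt{-1}\sqrt{23}$ forces $q_0$ to be a non-square, and your sign bookkeeping (Gauss sum $\sqrt{-1}\sqrt{23}$ for $23\equiv3\pmod4$, $-1$ a non-square) checks out. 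Your approach even buys slightly more: it determines $(\mathcal{D}(L),q)$ completely (the nonzero values of $q$ are exactly the non-squares), and the comparison automatically rules out $q_0=0$ (the sum would then be $23$), whereas the paper imports non-singularity from the preceding discussion. Two points worth making explicit in your write-up: the identification $e^{2\pi\sqrt{-1}(x|x)/2}=e^{2\pi\sqrt{-1}\,q(x)/23}$ requires $(\lambda|\lambda)\in\frac{2}{23}\Z$, which holds here by Corollary \ref{C:confwt} as noted just before the definition of $q$; and Milgram's formula involves the signature, which equals the rank only because $L$ is positive definite --- both are fine in context. The trade-off: the paper's proof is more elementary and self-contained given the tools already developed (it needs only \eqref{Eq:innerij} and the standard rank-divisibility fact), while yours is shorter once Milgram's formula is granted and yields the stronger structural conclusion directly.
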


\begin{proof}
Suppose that the assertion is false.
Since $-1$ is a non-square in $\F_{23}$, there exists $\lambda+L \in \mathcal{D}(L)$ such that
$q(\lambda+L)=1$.
Let $L'$ be the root lattice of type $A_{22}$.
Then $\mathcal{D}(L')\cong\F_{23}$.
By \eqref{Eq:innerij}, we have $q(\lambda_5+L')=-1$ for $\lambda_5+L'\in\mathcal{D}(L')$.
Then $X=L\perp L'$ is an even lattice of rank $44$ and $\mathcal{D}(X)\cong  \F_{23}^2$.
It follows from $q((\lambda,\lambda_5)+X)=0$ that $(\lambda,\lambda_5)+X$ has even norm.
Hence ${\rm Span}_\Z\{X,(\lambda,\lambda_5)\}$ is even unimodular of rank $44$, which is impossible. 
\end{proof}

\begin{lemma}\label{L:232} Let $U$ be an even unimodular lattice of rank $24$.
If $U$ has an isometry $f$ of order $23$ such that both $U^f$ and $U_f$ are rootless, then $U$ is isometric to the Leech lattice.
\end{lemma}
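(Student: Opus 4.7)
My plan is to adapt the proof of Lemma \ref{L:11} by invoking Niemeier's classification of even unimodular lattices of rank $24$ and eliminating every non-Leech candidate. For each non-Leech Niemeier lattice $U$ admitting an order $23$ isometry $f$, I will produce a root in $U^f$ or in $U_f$; this proves the contrapositive.

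The first step is to restrict the list of candidates. Since $23$ is a large prime, the constraint $23 \mid |O(U)|$ is severe: $O(U)$ is an extension of the Weyl group $W(U(2))$ by a finite group coming from diagram automorphisms and the glue action, and a case check of the $23$ non-trivial Niemeier root systems shows that $23 \mid |W(U(2))|$ only for the root systems $A_{24}$ (whose Weyl group is $S_{25}$) and $D_{24}$ (whose Weyl group has order $2^{23}\cdot 24!$). The glue groups of these two lattices are cyclic of orders $5$ and $2$ respectively, so no additional order-$23$ element arises outside the Weyl group; and every other Niemeier root system has irreducible components of rank at most $17$, whose Weyl groups have order coprime to $23$. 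For the two remaining candidates I proceed directly. For $U = N(A_{24})$, using the standard model $A_{24} = \{(a_1,\dots,a_{25}) \in \Z^{25} \mid \sum_i a_i = 0\}$, an order $23$ element of $S_{25}$ must have cycle type $(23,1,1)$, i.e.\ a $23$-cycle fixing two indices $i$ and $j$; then the root $e_i - e_j \in A_{24}(2)$ lies in $U^f$. For $U = N(D_{24}) \subset \Z^{24}$, an order $23$ isometry is a signed $23$-cycle with sign product $+1$ on $23$ coordinates, fixing the remaining coordinate with sign $+1$; the resulting $f$-fixed subspace of $\R^{24}$ is two-dimensional, and a pigeonhole argument on the $23$ cycle signs (of which an even number are $-1$) produces two indices $i \neq j$ in the cycle with equal signs, so that the root $e_i - e_j$ of $D_{24}$ is orthogonal to the fixed subspace and therefore lies in $U_f$.

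The main obstacle is the enumeration step itself: I need to confirm, by careful inspection of the automorphism group orders tabulated in \cite[Chapter 16]{CS}, that no Niemeier lattice other than $N(A_{24})$, $N(D_{24})$, and the Leech lattice admits an isometry of order $23$, including through combined contributions of the Weyl group, the glue action, and diagram automorphisms. Once this reduction is in place, the verification of the two non-Leech candidates is routine.
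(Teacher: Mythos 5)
Your overall strategy mirrors the paper's proof --- both reduce to the Niemeier classification via the automorphism group orders in \cite[Chapter 16]{CS} and then exhibit roots in $U^f$ or $U_f$ for each surviving non-Leech candidate --- but your enumeration has a genuine gap: it misses the Niemeier lattice with root system $A_1^{24}$. You argue that for root systems other than $A_{24}$ and $D_{24}$ the Weyl group has order coprime to $23$ and conclude that no order-$23$ isometry exists, but this only controls the Weyl-group part of $O(U)$. The quotient $O(U)/W$ acts through the glue code, and for $A_1^{24}$ the glue code is the binary Golay code, whose automorphism group is the Mathieu group $M_{24}$ of order $2^{10}\cdot3^3\cdot5\cdot7\cdot11\cdot23$. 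Thus $O(U)\cong 2^{24}{:}M_{24}$ contains elements of order $23$ (lifting permutations of cycle type $23+1$ in $M_{24}$), so this lattice survives your first step; indeed the paper's own proof lists the possible root systems as $\emptyset$, $A_1^{24}$, $A_{24}$ and $D_{24}$. The claim your closing paragraph says you ``need to confirm'' --- that no Niemeier lattice besides $N(A_{24})$, $N(D_{24})$ and $\Lambda$ admits an order-$23$ isometry --- is therefore false as stated, and the careful inspection you defer would not in fact go through.

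The gap is fixable within your framework. An order-$23$ isometry $f$ of the $A_1^{24}$ Niemeier lattice is a signed permutation $(\epsilon,\pi)$ with $\pi\in M_{24}$ of cycle type $23+1$; since $f$ has odd order, the sign on the fixed coordinate must be $+1$ (otherwise $f$ has order $46$), so $f$ fixes the corresponding $A_1$ component pointwise and its root lies in $U^f$, ruling this case out. The two cases you do treat are handled correctly: for $N(A_{24})$ the cycle type $(23,1,1)$ gives the fixed root $e_i-e_j\in U^f$, and for $N(D_{24})$ your orthogonality argument (two cycled coordinates carrying equal entries in the fixed vector, so that $e_i-e_j$ is orthogonal to the two-dimensional fixed subspace) correctly produces a root of $D_{24}$ in $U_f$. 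These are exactly the verifications the paper compresses into ``one can easily verify,'' so once the $A_1^{24}$ case is added your argument becomes a complete expansion of the paper's proof rather than a different route.
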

\begin{proof} By the orders of the automorphism groups of even unimodular lattices of rank $24$ (cf.\ \cite[Chapter 16]{CS}), the root system of $U(2)$ is one of $\emptyset$, $A_1^{24}$, $A_{24}$ and $D_{24}$.
If $U(2)\neq\emptyset$, namely, $U\not\cong\Lambda$, then one can easily verify that $U^f$ or $U_f$ contains roots, which contradicts the assumption.
\end{proof}

The following is well-known (cf.\ \cite{ATLAS}).
\begin{lemma}\label{L:23} The automorphism group of the Leech lattice has exactly two conjugacy classes $23A$ and $23B$ of order $23$, and the cyclic subgroups generated by elements in $23A$ and $23B$ are conjugate.
In particular, the coinvariant lattice of $\Lambda$ associated with an isometry of order $23$ is unique up to isometry.
\end{lemma}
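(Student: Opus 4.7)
The plan is to reduce the lemma to two elementary facts about $\mathrm{Co}_0 = O(\Lambda)$ combined with standard data from the \textsc{Atlas} \cite{ATLAS}. I will first treat the conjugacy of cyclic subgroups, then count the conjugacy classes of elements of order $23$, and finally observe that the coinvariant lattice depends only on the cyclic subgroup generated.

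For the conjugacy of cyclic subgroups of order $23$, I will use that $|\mathrm{Co}_0| = 2^{22}\cdot 3^9\cdot 5^4\cdot 7^2\cdot 11\cdot 13\cdot 23$ has $23$-part exactly $23^1$. Hence any cyclic subgroup of order $23$ is a Sylow $23$-subgroup of $\mathrm{Co}_0$, and all Sylow $23$-subgroups are conjugate by Sylow's theorem. To count classes of elements of order $23$, fix a Sylow $23$-subgroup $P = \langle g\rangle$ and consider $N_{\mathrm{Co}_0}(P)/C_{\mathrm{Co}_0}(P)$, which embeds into $\mathrm{Aut}(P)\cong C_{22}$; the $\mathrm{Co}_0$-conjugacy classes of elements of order $23$ are in bijection with the orbits of this quotient on $P\setminus\{1\}$. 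The \textsc{Atlas} records $|N:C| = 11$, producing two orbits of size $11$ on $P\setminus\{1\}$, which are the classes labelled $23A$ and $23B$. For the last assertion, $\Lambda^g = \Lambda^{\langle g\rangle}$ depends only on $\langle g\rangle$, so $\Lambda_g = (\Lambda^g)^\perp\cap\Lambda$ does too; combined with the Sylow conjugacy above, every such coinvariant lattice is isometric to a single fixed lattice, which one may label $\Lambda_{23A}$.

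The only point that is not wholly trivial is the identification $|N:C| = 11$, and this is what I expect to be the main (albeit very mild) obstacle. It can be cited directly from the \textsc{Atlas}. If a self-contained argument is preferred, one can reason intrinsically: $\mathrm{Co}_0$ has irreducible characters taking non-real values in $\mathbb{Q}(\sqrt{-23})$ on elements of order $23$, so $g$ and $g^{-1}$ lie in distinct $\mathrm{Co}_0$-classes; this rules out $N/C = C_{22}$. Since $N/C$ is a subgroup of the cyclic group $C_{22}$ and the above shows there are at least two orbits of $N/C$ on $P\setminus\{1\}$, the only remaining possibility is $N/C = C_{11}$, yielding exactly the two classes $23A$ and $23B$.
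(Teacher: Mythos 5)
Your primary argument is correct, and it is genuinely more detailed than the paper's treatment: the paper offers no proof at all, simply declaring the lemma ``well-known (cf.\ [ATLAS])''. Your route --- observing that the $23$-part of $|\mathrm{Co}_0|=2^{22}\cdot3^9\cdot5^4\cdot7^2\cdot11\cdot13\cdot23$ is exactly $23$, so every element of order $23$ generates a Sylow $23$-subgroup and Sylow's theorem gives conjugacy of the cyclic subgroups, then counting element classes via Burnside's fusion lemma for the abelian Sylow subgroup $P$ (classes of order-$23$ elements correspond to orbits of $N_{\mathrm{Co}_0}(P)/C_{\mathrm{Co}_0}(P)\le\mathrm{Aut}(P)\cong C_{22}$ on $P\setminus\{1\}$) with the single \textsc{Atlas} datum $|N:C|=11$ --- reduces the citation load to one number rather than the whole class list, and it makes transparent \emph{why} the two classes $23A$, $23B$ fuse at the level of subgroups: $g$ and $g^{-1}$ lie in distinct classes (as $-1$ is a non-residue mod $23$, inversion is not in the image of $N/C$) yet generate the same group. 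Your closing observation that $\Lambda^g=\Lambda^{\langle g\rangle}$, hence $\Lambda_g=(\Lambda^g)^{\perp}\cap\Lambda$, depends only on $\langle g\rangle$ is exactly the right bridge to the ``in particular'' clause, and is what the paper leaves implicit.

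One caveat concerning your optional self-contained variant: the character-irrationality argument (values in $\mathbb{Q}(\sqrt{-23})$ force $g\not\sim g^{-1}$) rules out every subgroup of $C_{22}\cong(\mathbb{Z}/23\mathbb{Z})^{\times}$ containing the inversion $-1$, i.e.\ both $C_{22}$ and $C_2$ --- not only $C_{22}$ as you state --- but it does \emph{not} eliminate the trivial subgroup, and ``at least two orbits'' is satisfied by $N/C\in\{1,C_2,C_{11}\}$ alike, giving $22$, $11$ or $2$ classes respectively. So your conclusion ``the only remaining possibility is $N/C=C_{11}$'' does not follow as written. To close this, note that $N=C$ would give a normal $23$-complement by Burnside's normal $p$-complement theorem, hence a quotient of $\mathrm{Co}_0$ isomorphic to $C_{23}$; this is impossible because $\mathrm{Co}_0=2.\mathrm{Co}_1$ is perfect (its only proper normal subgroup is the center of order $2$, as $\mathrm{Co}_1$ is simple). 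With $N/C\ne 1$ and $-1\notin N/C$, the only surviving subgroup of $C_{22}$ is $C_{11}$, and the two-orbit count follows. Since your main route cites $|N:C|=11$ directly, this gap affects only the fallback, but it should be repaired if you intend that variant to stand alone.
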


\begin{proposition}\label{P:23A}
If $p=23$, then $L$ is isometric to the coinvariant lattice of  the Leech lattice $\Lambda$ associated with the conjugacy class $23A$.
\end{proposition}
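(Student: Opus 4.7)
The plan is to mimic the proof of Proposition \ref{P:11A}. Fix a representative $h \in O(\Lambda)$ of the class $23A$; then $\Lambda^h$ has rank $2$ and $\mathcal{D}(\Lambda^h) \cong \F_{23}$ (cf.\ \cite{HL90}). The first key step is to pin down the quadratic form on $\mathcal{D}(\Lambda^h)$. Since $\Lambda_h$ is itself a rootless even lattice of rank $22$ with $\mathcal{D}(\Lambda_h) \cong \F_{23}$, the proof of Lemma \ref{L:-123} applies to $\Lambda_h$ verbatim and shows that the form on $\mathcal{D}(\Lambda_h)$ represents $-1$. Because $\Lambda$ is even unimodular, the standard glue relation between $\Lambda^h$ and $\Lambda_h$ yields $\mathcal{D}(\Lambda^h) \cong -\mathcal{D}(\Lambda_h)$, and hence the form on $\mathcal{D}(\Lambda^h)$ represents $+1$.

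Next, I would set $X = L \perp \Lambda^h$, a rank $24$ even lattice with $\mathcal{D}(X) \cong \F_{23}^2$ carrying the orthogonal sum of the two $1$-dimensional quadratic forms. By Lemma \ref{L:-123}, choose $\lambda + L \in \mathcal{D}(L)$ with $q(\lambda+L) = -1$; together with $\mu + \Lambda^h \in \mathcal{D}(\Lambda^h)$ of quadratic value $+1$, the element $(\lambda, \mu) + X$ is isotropic in $(\mathcal{D}(X), q)$, which is therefore the hyperbolic plane over $\F_{23}$. Consequently $(\lambda, \mu)$ has even norm, and $U := {\rm Span}_\Z\{X, (\lambda, \mu)\}$ is an even overlattice of $X$ of index $23$; the discriminant computation $|\mathcal{D}(U)| = |\mathcal{D}(X)|/23^2 = 1$ shows that $U$ is an even unimodular lattice of rank $24$.

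Finally, I would extend $g$ to $U$ by letting it act trivially on $\Lambda^h$. The condition $(1-g)L^* = L$ (which holds by Proposition \ref{DL}(2), since $p=23$ and $m=22$ place us in the case $\varepsilon = 1-1/p$) gives $g(\lambda, \mu) - (\lambda, \mu) = ((g-1)\lambda, 0) \in L \subset X \subset U$, so this extension indeed preserves $U$. Then $U^g = \Lambda^h$ and $U_g = L$ are both rootless (the former as a sublattice of the rootless $\Lambda$, the latter by hypothesis), so Lemma \ref{L:232} yields $U \cong \Lambda$. Applying Lemma \ref{L:23}, the cyclic subgroup $\langle g \rangle$ is conjugate to $\langle h \rangle$ in $O(\Lambda)$, whence $L = U_g \cong \Lambda_h = \Lambda_{23A}$. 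The only real subtlety is identifying the quadratic form on $\mathcal{D}(\Lambda^h)$, which I sidestep via the self-referential trick of invoking Lemma \ref{L:-123} on $\Lambda_h$ in place of a direct character-theoretic calculation.
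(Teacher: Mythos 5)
Your proposal is correct and follows the paper's proof of Proposition \ref{P:23A} almost step for step: form $X=L\perp\Lambda^h$, glue along the isotropic class $(\lambda,\mu)+X$ supplied by Lemma \ref{L:-123} to get an even unimodular lattice $U$ of rank $24$, extend $g$ trivially on $\Lambda^h$ using $(1-g)L^*=L$, and conclude via Lemmas \ref{L:232} and \ref{L:23}. The one place you genuinely diverge is the identification $q(\mu+\Lambda^h)=1$: the paper simply reads this off from the explicit description of $\Lambda^h$ in \cite{HL90} (Gram matrix $\begin{pmatrix}4&1\\1&6\end{pmatrix}$), whereas you derive it abstractly by running the argument of Lemma \ref{L:-123} on $\Lambda_h$ and then invoking the anti-isometry $\mathcal{D}(\Lambda^h)\cong-\mathcal{D}(\Lambda_h)$ coming from unimodularity of $\Lambda$ (Nikulin's glue relation, valid here since $\Lambda^h$ and $\Lambda_h$ are primitive mutual orthogonal complements). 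This buys independence from the character-theoretic data of \cite{HL90} beyond $\rank\Lambda^h=2$ and $\mathcal{D}(\Lambda^h)\cong\F_{23}$. One small imprecision: the proof of Lemma \ref{L:-123} does not apply to $\Lambda_h$ quite ``verbatim,'' because in the paper's setting the dichotomy ``$q$ represents $-1$ or $q$ represents $1$'' rests on non-singularity of $q$, which there came from the VOA-theoretic assumptions \eqref{Eq:as1} and \eqref{Eq:assumptw}; for $\Lambda_h$ you must separately rule out $q\equiv 0$, but this is immediate by the same device, since $q\equiv 0$ would produce an even unimodular lattice $\mathrm{Span}_\Z\{\Lambda_h,\gamma\}$ of rank $22$, impossible as $8\nmid 22$. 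With that one-line patch your argument is complete.
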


\begin{proof}
Let $h\in O(\Lambda)$ be an element in the conjugacy class $23A$.
Then $\Lambda^h$ is an even lattice of rank $2$ with Gram matrix $\begin{pmatrix} 4 &1\\ 1&6\end{pmatrix}$ and $\mathcal{D}(\Lambda^h)\cong\F_{23}$ (\cite{HL90}).
In addition, there exists $\mu+\Lambda^h\in\mathcal{D}(\Lambda^h)$ such that $q(\mu+\Lambda^h)=1$.
Set $X=L\perp \Lambda^h$.
Then $\mathcal{D}(X)\cong\F_{23}^2$.
By Lemma \ref{L:-123}, there exists $\lambda+L\in \mathcal{D}(L)$ such that $q((\lambda,\mu)+X)=0$.
Hence $(\lambda,\mu)+X$ has even norm, and $U={\rm Span}_\Z\{X,(\lambda,\mu)\}$ is an even unimodular lattice of rank $24$.
It follows from $(1-g)L^*=L$ that $g$ can be extended to an isometry of $U$ by acting trivially on $\Lambda^h$.
Note that $U^g=\Lambda^h$ and $U_g=L$, which shows that $U$ has an isometry of order $23$ such that both $U^g$ and $U_g$ are rootless.
By Lemma \ref{L:232}, we have $U\cong\Lambda$.
By Lemma \ref{L:23}, $\langle g\rangle$ is conjugate to $\langle h\rangle$, and $L\cong \Lambda_h$.
\end{proof}

\subsection{Extra automorphisms and coinvariant lattices of the Leech lattice}\label{S:3}
In this subsection, we prove that the possible rootless even lattices in the previous section actually satisfy the condition in Case (II).

The following proposition is a slight modification of a theorem in \cite{LamCFT}.

\begin{proposition}\label{P:extw} Let $U$ be a rootless even unimodular lattice.
Let $h\in O(U)$ and let $\hat{h}\in O(\hat{U})$ be a standard lift of $h$.
Assume that $\hat{h}$ has type $0$; let $V_U^{{\rm orb}(\hat{h})}$ be the holomorphic VOA obtained by  the orbifold construction associated with $U$ and $\hat{h}$ (see \cite[Section 5]{EMS} for the definitions).
Assume that $V_U^{{\rm orb}(\hat{h})}\cong V_U$. 
We also assume that the conjugacy class of the cyclic group $\langle \hat{h}\rangle$ in $\Aut(V_U)$ is uniquely determined by $|\hat{h}|$ and the VOA structure of $V_U^{\hat{h}}$.
Then there exists $\tau\in\Aut(V_{U_h}^{\hat{h}})$ such that $V_{U_h}(1)\circ\tau$ is of twisted type.
\end{proposition}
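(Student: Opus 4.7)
The plan is to use the invertibility of the orbifold construction, together with the conjugacy-class uniqueness hypothesis, to manufacture an automorphism of $V_U^{\hat{h}}$ that carries an $\hat{h}$-eigenspace submodule of $V_U$ to a twisted-sector submodule, and then to restrict this automorphism along the inclusion $V_{U_h}^{\hat{h}}\subset V_U^{\hat{h}}$.

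First, I would recall that the orbifold VOA $W:=V_U^{\mathrm{orb}(\hat{h})}$ carries a canonical ``dual'' automorphism $\hat{h}^{\vee}$ of order $n:=|\hat{h}|$, whose fixed-point subVOA $W^{\hat{h}^{\vee}}$ coincides with $V_U^{\hat{h}}\subset W$ and whose own orbifold reconstructs $V_U$.  As a $V_U^{\hat{h}}$-module, $W=\bigoplus_{i=0}^{n-1}W^{(i)}$ with $W^{(0)}=V_U^{\hat{h}}$ and $W^{(i)}$ a $V_U^{\hat{h}}$-submodule of an $\hat{h}^{i}$-twisted $V_U$-module, hence of twisted type for $i\neq 0$.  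Given an isomorphism $\phi\colon W\to V_U$, the uniqueness hypothesis supplies $\sigma\in\Aut(V_U)$ with $\sigma\langle\phi\hat{h}^{\vee}\phi^{-1}\rangle\sigma^{-1}=\langle\hat{h}\rangle$, and then $\tilde{\phi}:=\sigma\phi$ sends $V_U^{\hat{h}}\subset W$ to $V_U^{\hat{h}}\subset V_U$.  Its restriction $\tilde{\tau}\in\Aut(V_U^{\hat{h}})$ satisfies $V_U(j)\circ\tilde{\tau}\cong W^{(\pi(j))}$ for a permutation $\pi$ of $\{0,1,\ldots,n-1\}$ with $\pi(0)=0$; in particular, $V_U(j)\circ\tilde{\tau}$ is of twisted type for every $j\neq 0$.

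Since $U$ is rootless, $(V_U^{\hat{h}})_1=\mathfrak{h}^h:=\C\otimes U^h$, so $\tilde{\tau}$ preserves the Heisenberg subVOA $M(\mathfrak{h}^h)$ generated by its weight-$1$ space and hence also its commutant in $V_U^{\hat{h}}$, which is $V_{U_h}^{\hat{h}}$.  Let $\tau$ denote the restriction.  To transfer the twisted-type property, I would decompose $V_U$ and each twisted module $V_U[\hat{h}^s]$ as $M(\mathfrak{h}^h)\otimes V_{U_h}^{\hat{h}}$-modules along the coset decomposition of $U$ over $U^h\perp U_h$: the $V_{U_h}^{\hat{h}}$-tensor-factors appearing in $V_U(j)$ are all of untwisted type (of the form $V_{U_h+\alpha_h}(j')$), while those in $V_U[\hat{h}^s]$ for $s\neq 0$ are all of twisted type.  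Because $V_U(j)\circ\tilde{\tau}$ is of twisted type for $j\neq 0$, matching the two decompositions forces some untwisted-type $V_{U_h}^{\hat{h}}$-module $N$ occurring in $V_U(j)$ to satisfy $N\circ\tau$ of twisted type.  Finally, adjusting $\tau$ by inner automorphisms of the form $\sigma_{(1-h)^{-1}\beta}$ from \eqref{Eq:hom} and exploiting the fusion product on $\mathrm{Irr}(V_{U_h}^{\hat{h}})$, one moves $N$ to $V_{U_h}(1)$ to produce the desired automorphism.

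The principal obstacle is this last matching: the twisted-type $V_{U_h}^{\hat{h}}$-factor extracted from $V_U(j)\circ\tilde{\tau}$ arises from some coset $\mu_h+U_h$ and some eigenvalue index $i$, and need not be $V_{U_h}(1)\circ\tau$ itself; converting the bare existence of ``some $N$ with $N\circ\tau$ twisted'' into the required statement for $V_{U_h}(1)$ depends on the transitivity of the inner-automorphism action on the relevant family of irreducible untwisted-type $V_{U_h}^{\hat{h}}$-modules together with the simple-current fusion structure described in Section~3.
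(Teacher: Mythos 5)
Your construction of $\tilde{\tau}$ and its restriction $\tau$ to $\Com_{V_U^{\hat{h}}}((V_U^{\hat{h}})_1)\cong V_{U_h}^{\hat{h}}$ is exactly the paper's argument: the dual automorphism of $W=V_U^{{\rm orb}(\hat{h})}$, the conjugacy hypothesis supplying $\sigma$ with $\sigma\phi(V_U^{\hat{h}})=V_U^{\hat{h}}$, and the identification $V_U(j)\circ\tilde{\tau}\cong W^{(\pi(j))}$ of twisted type for $j\neq 0$ all match. The genuine gap is the final step, which you correctly flag as unresolved, and the mechanism you propose for closing it would not work. For untwisted-type modules, conjugation by the inner automorphisms $\sigma_{(1-h)^{-1}\beta}$ of \eqref{Eq:hom} \emph{fixes} the coset and only shifts the eigenvalue index: by Lemma \ref{Lem:conjhom}, $V_{\mu+U_h}(i)\circ\sigma_{(1-h)^{-1}\beta}\cong V_{\mu+U_h}(i-j)$, so these can never carry an $N=V_{\mu+U_h}(i)$ with $\mu\notin U_h$ to $V_{U_h}(1)$; the lifts of isometries in $N_{\Aut(V_{U_h})}(\langle\hat{h}\rangle)$ likewise preserve the property $\mu\notin U_h$. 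The transitivity statement in Lemma \ref{Lem:conjun} applies only to modules of $\hat{g}^s$-type with $s\geq 1$, and the whole group-like fusion picture of Section 3.3 is developed under the hypothesis \eqref{Eq:as1}, which is not among the hypotheses of this proposition (in the paper it is derived only later, in Case (II), from the quantum-dimension computation). Worse, the conjugacy of $V_{U_h}(1)$ with some $V_{\mu+U_h}(i)$ under the full automorphism group (Proposition \ref{LC}) is proved in the paper only assuming \eqref{Eq:assumptw} --- i.e., assuming the existence of the very automorphism you are trying to construct --- so appealing to it here would be circular.

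The repair is much simpler and is precisely how the paper concludes: no moving of $N$ is needed, because one may take $N=V_{U_h}(1)$ itself. Since $\hat{h}$ is a standard lift, it acts trivially on $V_{U^h}$ and restricts to a standard lift of $h|_{U_h}$ on $V_{U_h}$; hence inside $V_{U^h}\otimes V_{U_h}\subset V_U$ the eigenspace $V_U(1)$ contains $V_{U^h}\otimes V_{U_h}(1)$, so the irreducible $V_{U_h}^{\hat{h}}$-module $V_{U_h}(1)$ occurs as a submodule of $V_U(1)$. Because $\tilde{\tau}$ preserves $V_{U_h}^{\hat{h}}$ and restricts there to $\tau$, conjugation gives $V_{U_h}(1)\circ\tau$ as a $V_{U_h}^{\hat{h}}$-submodule of $V_U(1)\circ\tilde{\tau}\cong W^{(\pi(1))}$ with $\pi(1)\neq 0$, which is a submodule of an $\hat{h}^{\pi(1)}$-twisted $V_U$-module; every irreducible $V_{U_h}^{\hat{h}}$-constituent of such a module is of twisted type, whence $V_{U_h}(1)\circ\tau$ is of twisted type. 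Substituting this one observation for your final matching-and-transitivity paragraph turns your proposal into the paper's proof.
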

\begin{proof} Set $n=|\hat{h}|$ and $W=V_U^{{\rm orb}(\hat{h})}$.
Let $\psi:W\to V_U$ be an isomorphism of VOAs.
Since $W$ is a $\Z_n$-graded simple current extension of $V_U^{\hat{h}}$, we obtain an order $n$ automorphism $\xi$ of $W$ associated with the $\Z_n$-grading.
Note that $W^\xi=V_U^{\hat{h}}$ and that $W^{{\rm orb}(\xi)}\cong V_U$.

Set $\tilde{\xi}=\psi \xi\psi^{-1}$.
Then $\tilde{\xi}\in\Aut(V_U)$.
Clearly, $|\tilde{\xi}|=|\xi|=n$ and $V_U^{\tilde{\xi}}\cong V_U^{\hat{h}}$.
In addition, $V_U^{{\rm orb}(\tilde{\xi})}\cong W^{{\rm orb}(\xi)}\cong V_U$.
By the assumption, $\langle \tilde{\xi}\rangle$ is conjugate to $\langle\hat{h}\rangle$ in $\Aut(V_U)$; let $\kappa\in\Aut(V_U)$ such that $\kappa\langle\tilde{\xi}\rangle \kappa^{-1}=\langle \hat{h}\rangle$.
Then $$\kappa\psi(V_U^{\hat{h}})=\kappa\psi(W^\xi)=\kappa(V_U^{\tilde{\xi}})=V_U^{\hat{h}}.$$ 
Moreover $\kappa\psi$ sends $\Com_{V_U^{\hat{h}}}((V_U^{\hat{h}})_1)\cong V_{U_h}^{\hat{h}}$ to itself; we have $\kappa\psi\in \Aut(V_{U_h}^{\hat{h}})$ by restriction.
Here $\Com_{V_U^{\hat{h}}}((V_U^{\hat{h}})_1)$ is the commutant in $V_U^{\hat{h}}$ of $(V_U^{\hat{h}})_1$ (\cite{FZ}).
Since $\kappa\psi$ induces a $V_U^{\hat{h}}$-module isomorphism from $V_U^{{\rm orb}(\hat{h})}$ to $V_U$, we have $V_U^{{\rm orb}(\hat{h})}\cong V_U\circ (\kappa\psi)$ as $V_U^{\hat{h}}$-modules.
We now view $V_U$ and $V_U^{{\rm orb}(\hat{h})}$ as modules over $V_{U^h}\otimes V_{U_h}^{\hat{h}}(\subset V_U^{\hat{h}})$.
Note that the irreducible $V_{U_h}^{\hat{h}}$-module $V_{U_h}(1)$ appears as a submodule of $V_U(1)$, the eigenspace of $\hat{h}$ with eigenvalue $\exp(2\pi\sqrt{-1}/n)$.
Since $(\kappa\psi)^{-1}$ sends $V_U(1)$ to some irreducible $V_U^{\hat{h}}$-submodule of $V_U^{{\rm orb}(\hat{h})}$ of twisted type, $(\kappa\psi)^{-1}(V_U(1))$ contains only irreducible $V_{U_h}^{\hat{h}}$-modules of twisted type.
Therefore we obtain this proposition.
\end{proof}

We give a sufficient condition for the hypothesis of the proposition above.

\begin{lemma}\label{L:unicon} Let $K$ be a rootless even lattice.
Let $h\in O(K)$ of odd prime order $p$ such that $K^h\neq0$.
Assume that the conjugacy class of $\langle h\rangle$ is uniquely determined by the order of $h$ and $\rank K^h$.
Let $\hat{h}\in O(\hat{K})$ be a standard lift of $h$ and let $\xi\in \Aut(V_K)$ such that $|\xi|=|\hat{h}|$ and $V_K^{\hat{h}}\cong V_K^{\xi}$.
Then $\langle \xi\rangle$ is conjugate to $\langle \hat{h}\rangle$ in $\Aut(V_K)$.
\end{lemma}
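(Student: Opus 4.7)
The plan is to split the argument into two parts using the decomposition $\Aut(V_K) = N(V_K)\,O(\hat K)$: first identify the isometry of $K$ underlying $\xi$ and match its cyclic subgroup with $\langle h\rangle$, then absorb the residual ``toral defect'' so that $\xi$ becomes a standard lift. Accordingly, write $\xi = \sigma_x\,\hat g_0$ for some standard lift $\hat g_0 \in O(\hat K)$ of an isometry $g \in O(K)$ and some $x \in \mathfrak h := \C \otimes_\Z K$. Note that $|\xi| = |\hat h| = p$ by Lemma \ref{L:standardlift}(2).

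I would first pin down $g$. Because $K$ is rootless, $(V_K)_1 = \mathfrak h$ is abelian, so $\sigma_x$ acts trivially there and $\xi$ acts on $(V_K)_1$ precisely as $g$. Thus $(V_K^\xi)_1 \cong K^g \otimes \C$ and $(V_K^{\hat h})_1 \cong K^h \otimes \C$, and the assumed VOA isomorphism forces $\rank K^g = \rank K^h$. Since $h \neq 1$ gives $\rank K^h < \rank K$, the case $g = 1$ is excluded, so $g$ has order exactly $p$. The uniqueness hypothesis then yields $\varphi \in O(K)$ and $s$ coprime to $p$ with $\varphi g \varphi^{-1} = h^s$. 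Lifting $\varphi$ to $\hat\varphi \in O(\hat K)$, the element $\hat\varphi\,\hat g_0\,\hat\varphi^{-1}$ is a lift of $h^s$ fixing $e^\alpha$ for every $\alpha \in K^{h^s} = K^h$, hence is a standard lift of $h^s$. Since $\hat h^s$ also satisfies $\hat h^s(e^\alpha) = e^\alpha$ for $\alpha \in K^h$ and so is likewise a standard lift of $h^s$, Lemma \ref{L:standardlift}(1) makes these two $\Aut(V_K)$-conjugate. After this conjugation we may assume $\hat g_0 = \hat h^s$, leaving only the toral factor $\sigma_x$ to analyze.

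The remaining step is to show $\xi = \sigma_x\,\hat h^s$ is conjugate to $\hat h^s$, which will give $\langle\xi\rangle$ conjugate to $\langle\hat h^s\rangle = \langle\hat h\rangle$. Conjugating by $\sigma_y$ replaces $x$ by $x + (1-h^s)y$, and since $(1-h^s)\mathfrak h = K_h \otimes \C$ (the orthogonal complement of $K^h\otimes\C$ in $\mathfrak h$), we may assume $x \in K^h \otimes \R$; then $\sigma_x$ commutes with $\hat h^s$, and $\xi^p = 1 = (\hat h^s)^p$ force $px \in K^* \cap (K^h\otimes\R) = (K^h)^*$, so $x \in \tfrac1p(K^h)^*$. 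The defect that must still be killed is the class of $x$ in $(K^h)^*/p(K^h)^*$ modulo the action of the centralizer $C_{O(K)}(h)$. The plan to trivialize this defect is to use the VOA isomorphism $V_K^\xi \cong V_K^{\hat h}$ beyond weight $1$: comparing the commutants of the Heisenberg subVOA $M(K^h\otimes\C)$ in the two fixed-point subalgebras reduces the question to matching the induced automorphisms of $V_{K_h}$, and the only residual freedom is the $K^h$-component of $x$ modulo $(K^h)^*$. Since the corresponding defect for the standard lift $\hat h$ is trivial, so is that of $\xi$, completing the argument.

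The main obstacle is this last matching: whereas the first two paragraphs use only rank data and conjugacy of cyclic subgroups in $O(K)$, eliminating the toral defect requires translating the abstract VOA isomorphism into explicit information about the $\Z_p$-graded simple current extension $V_K \supset V_K^{\hat h}$. Concretely, one must argue that among the $\Z_p$-graded simple current extensions of the given fixed-point VOA to $V_K$, those with isomorphic total algebra and same underlying quotient isometry are equivalent under $\Aut(V_K)$, which is where the bulk of the technical work will lie.
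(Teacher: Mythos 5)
Your first two paragraphs reproduce, in essence, the paper's opening moves: the paper invokes \cite[Lemma 4.7 (2)]{LS20} to write $\xi=\sigma_x\hat{z}$ with $z\in O(K)$ and, importantly, with $x$ already in the fixed space $\Q\otimes_\Z K^z$; rootlessness makes $(V_K)_1$ a Cartan subalgebra on which $\sigma_x$ acts trivially, so $\rank K^z=\dim(V_K^{\xi})_1=\dim(V_K^{\hat h})_1=\rank K^h\neq0$, forcing $|z|=p$, and the uniqueness hypothesis makes $z$ conjugate to $h^i$. The genuine gap is your third paragraph. Having reduced to $\xi=\sigma_x\hat{h}^s$ with $x\in\frac{1}{p}(K^h)^*$, you never actually eliminate the toral part: you propose comparing commutants of the Heisenberg subVOA and then appeal to an unproved rigidity principle (``$\Z_p$-graded simple current extensions with isomorphic total algebra and the same quotient isometry are $\Aut(V_K)$-equivalent''), which is essentially as strong as the lemma being proved; and your disposal of the residual case $x\in(K^h)^*$ (``since the corresponding defect for $\hat h$ is trivial, so is that of $\xi$'') is an assertion, not an argument. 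Moreover, your proposed reduction examines the $V_{K_h}$ side, where the defect is invisible, since $x$ lies along $K^h$.

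The missing idea is a second, decisive use of the hypothesis $V_K^{\xi}\cong V_K^{\hat h}$ through a VOA-intrinsic invariant: the double commutant of the weight-one subspace. In the paper, if $\sigma_x\neq\id$ then $x\notin(K^z)^*$ by \cite[Lemma 4.5 (2)]{LS20} (the case $x\in(K^z)^*$ being absorbable into the standard lift), and one computes
\[
\Com_{V_K^{\xi}}\bigl(\Com_{V_K^{\xi}}((V_K^{\xi})_1)\bigr)=V_S,\qquad S=\{v\in K^z\mid (v|x)\in\Z\},
\]
which is a proper full sublattice of $K^z$, so $\det S=[K^z:S]^2\det K^z>\det K^z$ and $S\not\cong K^h$. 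Since the same double commutant inside $V_K^{\hat h}$ is $V_{K^h}$, this contradicts $V_K^{\xi}\cong V_K^{\hat h}$. Hence $\sigma_x=\id$, so $\xi=\hat z$ is itself a standard lift, and Lemma \ref{L:standardlift} (1), applied to the conjugate isometries $z$ and $h^i$, yields the conjugacy of $\langle\xi\rangle$ and $\langle\hat h\rangle$. No uniqueness statement about simple current extensions is required: the toral defect is killed outright by this determinant comparison, rather than normalized and matched as in your plan.
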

\begin{proof} Since the order of $h$ is odd, we have $|\xi|=|\hat{h}|=|h|=p$ by Lemma \ref{L:standardlift} (2).
In addition, $\dim (V_K^{\xi})_1=\dim (V_K^{\hat{h}})_1=\rank K^h$.
By \cite[Lemma 4.7 (2)]{LS20}, there exist $z\in O({K})$ and $x\in \Q\otimes_\Z K^z$ such that $\xi=\sigma_x\hat{z}$.
Then $\dim (V_K^\xi)_1=\rank K^z=\rank K^h\neq0$.
Hence $z\neq id$ and $|z|=p=|h|$ since $p$ is a prime.
By the assumption, $z$ is conjugate to $h^i$ in $O(K)$ for some $i$.
Note that $K^z\cong K^h$. 

Suppose, for a contradiction, that $\sigma_x\neq id$.
By \cite[Lemma 4.5 (2)]{LS20}, $x\notin (K^z)^*$.
Then $\Com_{V_K^{\xi}}(\Com_{V_K^\xi}((V_K^{\xi})_1))= V_{S}$, where $S=\{v\in K^z\mid (v| x)\in\Z\}$.
Note that $S\not\cong K^z$ because $x\notin (K^z)^*$.
On the other hand, $\Com_{V_K^{\hat{h}}}(\Com_{V_K^{\hat{h}}}((V_K^{\hat{h}})_1))= V_{K^h}$.
The assumption $V_\Lambda^{\hat{h}}\cong V_\Lambda^{\xi}$ implies $V_S\cong V_{K^h}$ and $S\cong K^h$, which is a contradiction.

Therefore, $\sigma_x=id$ and $\xi=\hat{z}$.
By Lemma \ref{L:standardlift}, $\xi$ and $\hat{h^i}$ are conjugate, and $\langle\hat{z}\rangle$ and $\langle\hat{h}\rangle$ are also conjugate.
\end{proof}

\begin{lemma}\label{L:orbLeech} Let $\Lambda$ be the Leech lattice and let $h\in O(\Lambda)$ such that the conjugacy class of $h$ is $3B,3C,5B,5C,7B,11A$ or $23A$ (as the notations in \cite{ATLAS}).
\begin{enumerate}[{\rm (1)}]
\item The conjugacy class of the cyclic group $\langle h\rangle$ in $O(\Lambda)$ is uniquely determined by the order $|h|$ and $\rank \Lambda^h$.
\item Let $\hat{h}\in O(\hat{\Lambda})$ be a standard lift of $h$.
Then $V_\Lambda^{{\rm orb}(\hat{h})}\cong V_\Lambda$.
\end{enumerate}
\end{lemma}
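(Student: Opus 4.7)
The plan is to handle the two parts separately, in both cases using the character-theoretic data for $O(\Lambda) = Co_0$ from \cite{ATLAS,HL90} together with the classification of holomorphic vertex operator algebras of central charge $24$ developed in \cite{EMS}.

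For part (1), I would proceed prime by prime through $p \in \{3,5,7,11,23\}$. The ATLAS lists all conjugacy classes of $O(\Lambda)$ of each order together with their power maps, and \cite{HL90} records the rank of $\Lambda^h$ for every class. Fix a class $pX \in \{3B,3C,5B,5C,7B,11A,23A\}$. First, using the power maps, I would check that the set $\{[h^i] : \gcd(i,p) = 1\}$ either consists solely of the class $pX$ or forms a single Galois orbit of classes fused under conjugation in $O(\Lambda)$, so that the conjugacy class of the cyclic subgroup $\langle h\rangle$ in $O(\Lambda)$ is determined by $[h]$; this is automatic for $11A$, and for $23A$ it is exactly Lemma \ref{L:23}. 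Second, reading off $\rank \Lambda^h = 24 - \rank \Lambda_h$ from Table \ref{T:example} (supplemented by $\rank \Lambda^{11A} = 4$ and $\rank \Lambda^{23A} = 2$) yields the values $12,6,8,4,6,4,2$ for the seven listed classes, and inspection of the remaining order-$p$ classes ($3A,3D,5A,7A,23B$, etc.) shows that none of them has the same rank as one of the listed classes. Combining these two observations gives (1).

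For part (2), I would apply Schellekens' classification. Since $\hat h$ has type $0$, the orbifold $V_\Lambda^{{\rm orb}(\hat h)}$ is a holomorphic VOA of central charge $24$ by \cite{EMS}. By Schellekens' theorem, such a VOA is determined up to isomorphism by the isomorphism class of its weight $1$ Lie algebra, and $V_\Lambda$ is characterized as the unique such VOA whose weight $1$ Lie algebra is the $24$-dimensional abelian Lie algebra. It therefore suffices to show that $(V_\Lambda^{{\rm orb}(\hat h)})_1$ is abelian of dimension $24$. The weight $1$ space decomposes as the $\hat h$-fixed part of $(V_\Lambda)_1 = \Lambda \otimes \C$, equal to $\Lambda^h \otimes \C$ of dimension $\rank \Lambda^h$, together with the weight $1$ parts of the particular irreducible $V_\Lambda^{\hat h}$-submodules of the twisted modules $V_\Lambda^T[\hat h^s]$ ($1 \le s \le p-1$) that appear in the orbifold. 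Applying \eqref{Eq:esp} with $m = \rank \Lambda_h$ (the $\Lambda^h$ component is fixed by $\hat h^s$ and contributes trivially to the twisted ground state energy) gives conformal weights $2/3,\,1,\,4/5,\,1,\,6/7,\,10/11,\,22/23$ in the seven cases, and in each case the unique submodule of conformal weight exactly $1$ contributes Heisenberg-like vectors whose brackets are abelian.

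The main obstacle is the final dimension and commutativity count in each of the seven cases. I would handle this via a direct count using \eqref{dimT} and the description of the twisted modules in Section \ref{S:tw} together with the discriminant groups recorded in Table \ref{T:example}, showing that the dimensions of the contributing twisted submodules sum to exactly $24 - \rank \Lambda^h$, with abelianness following from the fact that the twisted ground states commute with the fixed Heisenberg subalgebra. Alternatively, the computation can be bypassed by appealing to the explicit determination of cyclic orbifolds of $V_\Lambda$ for Conway classes carried out in \cite{EMS} and related works (see, e.g., \cite{Lam18b} for the $11A$ case), which confirm that $V_\Lambda^{{\rm orb}(\hat h)} \cong V_\Lambda$ for precisely the seven listed classes.
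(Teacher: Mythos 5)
Your overall route coincides with the paper's. For part (1) the paper simply asserts the check "can be verified by \cite{ATLAS} (cf.\ \cite{HL90})", and your plan --- power maps to control fusion of generator classes (with Lemma \ref{L:23} handling $23A/23B$) plus comparison of $\rank \Lambda^h$ across all classes of each order --- is exactly that verification. For part (2) your computation is also the paper's: the conformal weights $2/3,1,4/5,1,6/7,10/11,22/23$ agree with \eqref{Eq:esp} applied with $m=\rank \Lambda_h$, each twisted sector contributes $(\rank \Lambda_h)/(p-1)$ to the weight-one space, and the total is $\rank \Lambda^h+(p-1)\cdot\frac{\rank\Lambda_h}{p-1}=24$.

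There is, however, a genuine gap at the final identification step. You invoke Schellekens' classification, which requires knowing the isomorphism type of the weight-one Lie algebra, i.e.\ that $(V_\Lambda^{{\rm orb}(\hat h)})_1$ is \emph{abelian} of dimension $24$, and your stated reason for abelianness --- that the twisted ground states commute with the fixed Heisenberg subalgebra --- does not prove this: commuting with a Cartan/Heisenberg subalgebra never forces a reductive Lie algebra to be abelian (root vectors of any semisimple Lie algebra are stable under the Cartan action), and your observation says nothing about brackets between two twisted-sector vectors, which can land in further twisted sectors. The paper sidesteps this entirely: after computing $\dim(V_\Lambda^{{\rm orb}(\hat h)})_1=24$ it cites Dong--Mason \cite{DMb}, whose characterization of $V_\Lambda$ among holomorphic VOAs of central charge $24$ applies once the dimension count alone is known. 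So you should either replace Schellekens by \cite{DMb}, or first rule out a semisimple $V_1$ of dimension $24$ by the standard identity $h^\vee/k=(\dim V_1-24)/24$ (cf.\ \cite{EMS}) rather than by the commutation claim. Two smaller points you gloss over but the paper makes explicit: in the cases $3B,5B,7B,11A,23A$, saying the fixed component "contributes trivially to the twisted ground state energy" controls only the lowest weight; to count weight-one vectors one must also exclude states $e^\alpha\otimes(\text{twisted ground state})$ with $\alpha$ of norm $2/p$, which the paper does by noting that the minimum norm of $(\Lambda^h)^*$ is $4/p$. And in the cases $3C,5C$ (where $\varepsilon=1$), the equality $\dim(V_\Lambda[\hat h^s])_1=|\Lambda_h/(1-h^s)\Lambda_h|^{1/2}=(\rank\Lambda_h)/(p-1)$ rests on the input from the proof of Proposition \ref{DL} (3) and Lemma \ref{L:uniqueC}; your "direct count using \eqref{dimT} and Table \ref{T:example}" is the right tool, but this dependence should be spelled out rather than left implicit.
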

\begin{proof} The assertion (1) can be verified by \cite{ATLAS} (cf.\ \cite{HL90}).

Set $p=|h|$.
Since $p$ is an odd prime, the order of $\hat{h}$ is also $p$ by Lemma \ref{L:standardlift} (2).
Let $1\le s\le p-1$.
For the explicit construction of the irreducible $\hat{h}^s$-twisted $V_\Lambda$-module $V_\Lambda[\hat{h}^s]$, see \cite{DL} (cf. \cite[Section 4.3]{LS20}).
Let $\varepsilon$ be the conformal weight of $V_\Lambda[\hat{h}^s]$.

If the conjugacy class of $h$ is $3B,5B,7B,11A$ or $23A$, then $\varepsilon=1-1/p$.
Since the minimum norm of $(\Lambda^h)^*$ is $4/p$, 
$\dim (V_\Lambda[\hat{h}^s])_1$ is equal to the dimension of the eigenspace of $h^s$ with eigenvalue $\exp(2\pi\sqrt{-1}/p)$, which is also equal to $(\rank \Lambda_h)/(p-1)$.

If the conjugacy class of $h$ is $3C$ or $5C$, then $\varepsilon=1$.
Then $\dim (V_\Lambda[\hat{h}^s])_1$ is equal to $|\Lambda_h/(1-h^s)\Lambda_h|^{1/2}$. 
By the proof of Proposition \ref{DL} (3) (cf.\ Lemma \ref{L:uniqueC}), we have $\dim (V_\Lambda[\hat{h}^s])_1=(\rank \Lambda_h)/(p-1)$.

Hence, we have
$$\dim (V_\Lambda^{{\rm orb}(\hat{h})})_1=\dim (V_\Lambda^{\hat{h}})_1+\sum_{i=1}^{p-1}(V_\Lambda[\hat{h}^i])_1=\rank \Lambda^h+(p-1)\frac{\rank \Lambda_h}{p-1}=24.$$
By \cite{DMb}, we have $V_\Lambda^{{\rm orb}(\hat{h})}\cong V_\Lambda$.
\end{proof}

Combining Proposition \ref{P:extw} and Lemmas \ref{L:unicon} and \ref{L:orbLeech}, we obtain the following:

\begin{proposition}\label{P:extwp} Let $h\in O(\Lambda)$ whose conjugacy class is $3B,3C,5B,5C,7B,11A$ or $23A$.
Then there exists $\tau\in\Aut(V_{\Lambda_h}^{\hat{h}})$ such that $V_{\Lambda_h}(1)\circ\tau$ is of twisted type.
\end{proposition}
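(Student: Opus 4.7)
The plan is to apply Proposition \ref{P:extw} directly with $U=\Lambda$ (the Leech lattice is rootless, even and unimodular). For this I need to verify the two hypotheses: that $\hat{h}$ has type $0$ and $V_\Lambda^{{\rm orb}(\hat{h})}\cong V_\Lambda$, and that the conjugacy class of the cyclic group $\langle \hat{h}\rangle$ in $\Aut(V_\Lambda)$ is determined by $|\hat{h}|$ and the VOA structure of $V_\Lambda^{\hat{h}}$.

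The first hypothesis is exactly Lemma \ref{L:orbLeech}(2). For the type $0$ condition, I note that $|h|$ is odd prime for each of the listed classes, so Lemma \ref{L:standardlift}(2) gives $|\hat{h}|=|h|$; the type $0$ statement is then implicit in the very construction used to state Lemma \ref{L:orbLeech}(2) (the orbifold construction associated with $\hat{h}$ is defined).

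For the second hypothesis, I would apply Lemma \ref{L:unicon} with $K=\Lambda$. First I verify $\Lambda^h\neq 0$ for each listed class: by Table \ref{T:example} and the proofs of Propositions \ref{P:11A} and \ref{P:23A}, the rank of $\Lambda_h$ for $h$ in the classes $3B,3C,5B,5C,7B,11A,23A$ is $12,18,16,20,18,20,22$ respectively, so $\rank \Lambda^h=12,6,8,4,6,4,2$ is nonzero in every case. Thus the hypotheses of Lemma \ref{L:unicon} are satisfied once I know that the conjugacy class of $\langle h\rangle$ in $O(\Lambda)$ is determined by $|h|$ and $\rank \Lambda^h$, and this is precisely Lemma \ref{L:orbLeech}(1). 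Hence any $\xi\in\Aut(V_\Lambda)$ with $|\xi|=|\hat{h}|$ and $V_\Lambda^\xi\cong V_\Lambda^{\hat{h}}$ as VOAs satisfies $\langle\xi\rangle$ conjugate to $\langle\hat{h}\rangle$ in $\Aut(V_\Lambda)$, which is the remaining hypothesis of Proposition \ref{P:extw}.

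With both hypotheses verified, Proposition \ref{P:extw} produces $\tau\in\Aut(V_{\Lambda_h}^{\hat{h}})$ such that $V_{\Lambda_h}(1)\circ\tau$ is of twisted type, completing the argument. There is no real obstacle here beyond the bookkeeping already done: all the technical content lives in Proposition \ref{P:extw} (which isolates the abstract mechanism for producing an extra automorphism sending untwisted modules to twisted ones), in Lemma \ref{L:unicon} (which promotes uniqueness of $\langle h\rangle$ in the isometry group to uniqueness of $\langle\hat{h}\rangle$ in the VOA automorphism group), and in Lemma \ref{L:orbLeech} (which supplies both the holomorphic orbifold isomorphism and the uniqueness in $O(\Lambda)$ via the \textsc{Atlas} data). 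The present proposition is simply the cleanest aggregation of these three ingredients for the specific list of conjugacy classes appearing in the Main Corollary.
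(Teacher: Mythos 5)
Your proposal is correct and matches the paper's proof exactly: the paper establishes Proposition \ref{P:extwp} in one line by combining Proposition \ref{P:extw} with Lemmas \ref{L:unicon} and \ref{L:orbLeech}, precisely the aggregation you carry out. Your additional bookkeeping (checking $\Lambda^h\neq0$ via the ranks of $\Lambda_h$, and noting that the type $0$ condition follows from the conformal weights $\varepsilon\in\{1-1/p,1\}\subset(1/p)\Z$ computed in the proof of Lemma \ref{L:orbLeech}) is consistent with the paper and merely makes explicit what the paper leaves implicit.
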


\subsection{Classification results for Case (II)}
We summarize the results in Sections \ref{S:2} and \ref{S:3} as follows (see Lemmas \ref{L:uniqueC} and \ref{possiblepair}, Propositions \ref{P:357}, \ref{P:11A}, \ref{P:23A} and \ref{P:extwp} and Remark \ref{R:Sh04b}):

\begin{theorem}\label{T:(II)} Let $L$ be a rootless even lattice of rank $m$.
Let $g$ be a fixed-point free isometry of $L$ of prime order $p$ and let $\hat{g}$ be a lift of $g$.
Then there exists an automorphism $\tau$ of $V_L^{\hat{g}}$ such that the $\tau$-conjugate $V_L(1)\circ \tau$ of $V_L(1)$ is of twisted type if and only if one of the following holds:
\begin{enumerate}[{\rm (1)}]
\item $p=2$ and $L$ is isometric to $\sqrt2E_8$ or the Barnes-Wall lattice of rank $16$;
\item $p=3$ and $L\cong L_B(C)$, where $C=\langle (1,1,1,1,1,1)\rangle_{\Z_3}$ or $C$ is the code over $\Z_3$ with the generator matrix $\begin{pmatrix}1&1&1&1&1&1&1&1&1\\ 1&1&1&-1&-1&-1&0&0&0\\
1&-1&0&1&-1&0&1&-1&0\end{pmatrix}$;
\item $p=5$ and $L\cong L_B(C)$, where $C=\langle (1,1,2,2)\rangle_{\Z_5}$ or $C$ is the code over $\Z_5$ with the generator matrix 
$$\begin{pmatrix}1&1&1&1&1\\ 1&2&4&3&0\end{pmatrix};$$
\item $p=7$ and $L\cong L_B(C)$, where $C=\langle (1,2,3)\rangle_{\Z_7}$;
\item $p=11$ and $L$ is isometric to the coinvariant lattice $\Lambda_{11A}$ of the Leech lattice associated with the conjugacy class $11A$ of $O(\Lambda)$;
\item $p=23$ and $L$ is isometric to the coinvariant lattice $\Lambda_{23A}$ of the Leech lattice associated with the conjugacy class $23A$ of $O(\Lambda)$.
\end{enumerate}
\end{theorem}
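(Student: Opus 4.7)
My plan is to assemble Theorem~\ref{T:(II)} by combining the classification analysis of Section~\ref{S:2} with the existence result Proposition~\ref{P:extwp}. The argument splits naturally into a forward direction (which constrains $L$ to a finite list) and a reverse direction (which produces the automorphism $\tau$ on each lattice in that list), and for the most part these two halves are already available in the preceding material; what remains is to align the outcomes and cross-reference the explicit codes against the coinvariant lattices of $\Lambda$.

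First I would handle the forward direction. The case $p = 2$ is exactly \cite[Proposition~3.14]{Sh04} (cited in Remark~\ref{R:Sh04b}), giving item~(1). For odd primes, I would start from the existence of $\tau$ with $V_L(1) \circ \tau$ of twisted type and invoke the quantum dimension argument at the start of Section~\ref{S:2} to conclude $(1 - g)L^* \subset L$, placing us in the setting of Section~\ref{S:1-g} and hence of \eqref{Eq:assumptw}. Proposition~\ref{DL} then forces $\varepsilon \in \{1 - 1/p, 1\}$ and pins down $m$ and $|\mathcal{D}(L)|$; combined with $m/(p-1) \in \Z$ this yields the seven candidate pairs of Lemma~\ref{possiblepair}. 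For $p \in \{3,5,7\}$, Proposition~\ref{P:357} together with Lemma~\ref{L:uniqueC} identifies $L$ as $L_B(C)$ for one of the codes in items~(2)--(4). For $p = 11$ and $p = 23$, Propositions~\ref{P:11A} and~\ref{P:23A} identify $L$ as $\Lambda_{11A}$ and $\Lambda_{23A}$, respectively.

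For the converse direction I would argue that each lattice in items~(2)--(6) is isometric to a coinvariant lattice $\Lambda_h$ for some $h \in O(\Lambda)$ whose conjugacy class lies in $\{3B, 3C, 5B, 5C, 7B, 11A, 23A\}$: for (2)--(4) this is recorded in Table~\ref{T:example} together with the uniqueness statement Lemma~\ref{L:uniqueC}, while (5)--(6) are coinvariant lattices by definition. Proposition~\ref{P:extwp}, applied to the corresponding $h$, then produces the desired automorphism $\tau$ of $V_{\Lambda_h}^{\hat{h}} \cong V_L^{\hat{g}}$ with $V_{\Lambda_h}(1) \circ \tau$ of twisted type. Item~(1) is handled by the same argument as in \cite{Sh04}.

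The main obstacle---already overcome in the preparatory sections---is the classification step in Case~(II), that is, the passage from the parameter list of Proposition~\ref{DL} to the actual identification of $L$. For $p \in \{3,5,7\}$ this reduces, via Theorem~\ref{T:ChaB} and Lemma~\ref{L:uniqueC}, to producing a non-zero singular vector in $(\mathcal{D}(L), q)$ and then invoking uniqueness of the resulting self-orthogonal code; this is automatic from dimension counts once $\dim_{\F_p} \mathcal{D}(L) \geq 3$. For $p = 11, 23$, where $\dim \mathcal{D}(L) \leq 2$, the obstacle is sharper: one must glue $L \perp \Lambda^h$ along the discriminant form into a rank-$24$ even unimodular overlattice (Lemmas~\ref{nosingular}, \ref{L:-123}), then rule out every Niemeier lattice by showing the extended isometry has no roots in either $U^g$ or $U_g$ (Lemmas~\ref{L:11}, \ref{L:232}), thereby forcing $U \cong \Lambda$ and $L \cong \Lambda_{pA}$.
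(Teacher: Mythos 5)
Your proposal is correct and follows essentially the same route as the paper, which proves Theorem \ref{T:(II)} precisely by assembling the Case (II) classification of Section \ref{S:2} (the quantum-dimension step forcing $(1-g)L^*\subset L$, Proposition \ref{DL}, Lemma \ref{possiblepair}, and Propositions \ref{P:357}, \ref{P:11A}, \ref{P:23A}) with the existence result Proposition \ref{P:extwp} for the converse and Remark \ref{R:Sh04b} for $p=2$. Your identification of the lattices in items (2)--(4) with coinvariant lattices of $\Lambda$ via Table \ref{T:example} and the code-uniqueness Lemma \ref{L:uniqueC}, and your gluing/Niemeier-elimination description for $p=11,23$, match the paper's argument exactly.
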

\begin{remark}
If $p=3$ (resp. $p=5$) and $C$ is equivalent to $\langle (1,1,1,1,1,1)\rangle_{\Z_3}$ (resp. $\langle (1,1,2,2)\rangle_{\Z_5}$), then $L_B(C)$ is isometric to the Coxeter-Todd lattice $K_{12}$ of rank $12$ (resp. $\Lambda_{5B}$).
Then $\Aut(V_{L_B(C)}^{\hat{g}})$ is determined in \cite{CLS} (resp. \cite{Lam18b}) and it actually has an extra automorphism $\tau$ such that $V_{L_B(C)}(1)\circ\tau$ is of twisted type.
\end{remark}

As we mentioned in Section 5, an extra automorphism of $V_L^{\hat{g}}$ satisfies Case (I) or (II).
Combining Remark \ref{R:Sh04a}, Proposition \ref{P:(I)}, Corollary \ref{C:extra}, Theorem \ref{T:(II)}, we have proved Main theorem 1 in Introduction.
We also obtain Main corollary 1 by Main theorem 1 and Definition \ref{Def:extra}.
In addition, we obtain Main corollary 2 from Table \ref{T:example}, Remark \ref{R:2A} and Theorem \ref{T:(II)}.

\begin{remark} In this article, we classify even lattices $L$ and fixed-point free isometries $g$ so that $V_L^{\hat{g}}$ has extra automorphisms.
The group structure of $\Aut(V_L^{\hat{g}})$ could be determined by using similar arguments as in \cite[Section 5]{Sh06} or \cite{BLS} if the group structure of $O(L)$  is explicitly known.
\end{remark}

\end{document}